\documentclass[12 pt]{amsart}
\usepackage{amsmath,mathrsfs,amsfonts,amssymb,xypic,fullpage,setspace}
\usepackage[colorlinks=true]{hyperref}
\usepackage[capitalize]{cleveref}
\theoremstyle{plain}
\newtheorem{thm}{Theorem}[section]
\newtheorem{lem}[thm]{Lemma}
\newtheorem{prop}[thm]{Proposition}
\crefname{prop}{Proposition}{Propositions}
\newtheorem{cor}[thm]{Corollary}
\theoremstyle{definition}
\newtheorem{defn}[thm]{Definition}
\newtheorem{rem}[thm]{Remark}
%
%
%
\numberwithin{equation}{section}

\numberwithin{table}{section}

\crefformat{section}{#2\S#1#3}
\Crefformat{section}{#2 Section #1#3}
\crefformat{equation}{#2(#1)#3}

\DeclareMathOperator{\Gal}{Gal}
\DeclareMathOperator{\Frob}{Frob}
\DeclareMathOperator{\GL}{GL}
\DeclareMathOperator{\Hom}{Hom}
\DeclareMathOperator{\im}{im}
\DeclareMathOperator{\ord}{ord}
\DeclareMathOperator{\ur}{ur}
\DeclareMathOperator{\Sel}{Sel}
\DeclareMathOperator{\corank}{corank}
\DeclareMathOperator{\rank}{rank}
\DeclareMathOperator{\coker}{coker}
\DeclareMathOperator{\Res}{res}
\DeclareMathOperator{\Cor}{cor}
\DeclareMathOperator{\id}{id}

\setlength\footskip{.5in}

\title{On the structure of Selmer groups of $p$-ordinary modular forms over $\mathbf{Z}_p$-extensions}
\author{Keenan Kidwell}
\date{\today}

\begin{document}
\keywords{Iwasawa theory, $\mathbf{Z}_p$-extensions, modular forms, Selmer groups}
\maketitle

\begin{abstract}
We prove analogues of the major algebraic results of \cite{GV00} for Selmer groups of $p$-ordinary newforms over $\mathbf{Z}_p$-extensions which may be neither cyclotomic nor anticyclotomic, under a number of technical hypotheses, including a cotorsion assumption on the Selmer groups. The main complication which arises in our work is the possible presence of finite primes which can split completely in the $\mathbf{Z}_p$-extension being considered, resulting in the local cohomology groups that appear in the definition of the Selmer groups being significantly larger than they are in the case of a finitely decomposed prime. We give a careful analysis of the $\Lambda$-module structure of these local cohomology groups and identify the relevant finiteness condition one must impose to make the proof of the key cohomological surjectivity result \cite[Proposition 2.1]{GV00} work in our more general setting.
\end{abstract}


\section{Introduction}
\label{intro}

\indent Let $p$ be a prime. The study of Selmer groups attached to arithmetic objects relative to $\mathbf{Z}_p$-extensions of number fields has its genesis in the early work of Iwasawa on class groups (and today bear's Iwasawa's name in honor of his pioneering achievements). The Selmer groups for elliptic curves were defined in Mazur's seminal paper \cite{M}, the motivation of which was in part to introduce a theory for Mordell-Weil groups analogous to Iwasawa's for class groups. Mazur used (essentially) fppf cohomology to define the Selmer groups, but his definition was translated into the language of Galois cohomology by Manin in \cite{Man71}. Using the Galois-cohomological framework, Greenberg eventually found a description of the local condition at $p$ used for defining the Selmer groups (in the case of $p$-ordinary elliptic curves) which he used in his general definition of Selmer groups for what he called $p$-ordinary Galois representations in \cite{GR89}. Since their inception, Selmer groups of this type (and generalizations) have been defined and investigated in a wide variety of settings, resulting in some spectacular applications including Kato's progress on the conjecture of Birch and Swinnerton-Dyer (via Mazur's conjecture on Selmer groups for $p$-ordinary elliptic curves in \cite{M}, generalized to $p$-ordinary newforms), to name just one example. Still, the settings in which deep understanding of Selmer groups has been attained are fairly limited, and there is indisputably much more one would like to know. In this paper, we prove, under various technical hypotheses, a number of results on the structure of Selmer groups of $p$-ordinary modular forms relative to $\mathbf{Z}_p$-extensions of number fields which may be neither cyclotomic nor anticyclotomic (these being the two classes of $\mathbf{Z}_p$-extensions for which the most is known about Selmer groups).\\
\indent To be somewhat more precise, we now require that $p$ be an odd prime (as we will in the main body of the paper). Let $F$ be a number field, and $F_\infty$ a $\mathbf{Z}_p$-extension of $F$ with Galois group $\Gamma=\Gal(F_\infty/F)$. We assume a pair of technical hypotheses on the ramification and decomposition of $p$ in $F$ and $F_\infty$, respectively (see \cref{notation-hyp} for the precise conditions). Let $f$ be a normalized $p$-ordinary newform of weight greater than or equal to $2$ with Hecke eigenvalues in the ring of integers $\mathscr{O}$ of a finite extension of $\mathbf{Q}_p$ with uniformizer $\pi$ and residue field $\mathbf{F}$. We follow Greenberg (\cite{GR89}) in our definition of the Selmer group $\Sel(F_\infty,A)$ for $f$ over $F_\infty$, which carries the structure of a cofinitely generated discrete $\mathscr{O}$-torsion $\Lambda$-module, where $\Lambda=\mathscr{O}[[\Gamma]]$ is the completed group ring of $\Gamma$ with coefficients in $\mathscr{O}$ and $A$ is a cofree $\mathscr{O}$-module of corank $2$ arising from a choice of $G_F$-stable $\mathscr{O}$-lattice in the $p$-adic Galois representation associated to $f$ (see \cref{adic-selmer} for the precise definition of $\Sel(F_\infty,A)$ and \cref{app-a} for the basics of such modules over the ring $\Lambda$). It is the $\Lambda$-module structure of $\Sel(F_\infty,A)$ on which we focus in this paper, guided by \cite{GV00}, which treated the case with $F=\mathbf{Q}$ and $f$ corresponding to an elliptic curve over $\mathbf{Q}$. We carry out our analysis by looking at variants of $\Sel(F_\infty,A)$ attached to an auxiliary set of primes $\Sigma_0$ (so-called non-primitive Selmer groups, see \cref{non-prim-selmer}) as well as (non-primitive) Selmer groups with coefficients in the residual Galois representation associated to $f$. The interplay between these $p$-adic and residual (primitive and non-primitive) Selmer groups can be used to obtain information about the Iwasawa invariants, $\mu(f)$ and $\lambda(f)$, of $f$, with respect to $F_\infty$. These are non-negative integers, the $\mu$ and $\lambda$-invariants of the $\Lambda$-module $\Sel(F_\infty,A)$ as defined in \cref{iwasawa-inv-defn}. One intriguing problem is to determine the degree to which these invariants are determined by the residual Galois representation of $f$. While the residual Selmer group cannot be identified with the $\pi$-torsion of the Selmer group for $f$ in general, due to the fact that the $\pi$-torsion of the Selmer group is not \emph{a priori} determined by the residual Galois representation, one can establish the desired relationship for the non-primitive analogues of these Selmer groups. Namely, if the auxiliary set $\Sigma_0$ contains the primes dividing the tame level of $f$ and the Galois representation of $f$ restricted to $G_F$ is residually absolutely irreducible, then the residual $\Sigma_0$-non-primitive Selmer group for $f$ exactly gives the $\pi$-torsion of the $\Sigma_0$-non-primitive Selmer group for $f$ (this is the content of \cref{torsion-Selmer-isom}). Under various technical assumptions on which we elaborate as they are introduced, this fact can be used to prove a numerical relationship between the $\lambda$-invariants of $p$-ordinary newforms which are congruent modulo $\pi$ in the sense that their residual Galois representations are isomorphic (\cref{main}) which is a generalization of the theorem stated in \cite[p. 237]{Gr10} (originally proved in \cite{GV00}). The other main results of this paper on the structure of Selmer groups include a surjectivity theorem for global-to-local maps in the Galois cohomology of $A$ (\cref{sel-surj}) and the non-existence of proper $\Lambda$-submodules of finite index in a sufficiently non-primitive Selmer group for $f$ (\cref{no-subs-non-prim}). Theorems of the latter type are of interest because the structure theorem for finitely generated $\Lambda$-modules (\cref{lambda-structure-thm}) only holds up to ``finite errors," which can then be shown in certain cases to in fact vanish with the help of such non-existence results. These theorems are analogues (in our more general setting) of \cite[Proposition 2.1]{GV00} and \cite[Proposition 2.5]{GV00}. The main difference between our setting and that of \cite{GV00} is that, in the $\mathbf{Z}_p$-extensions we consider, finite primes may split completely. Indeed, this work was largely motivated by a desire to understand the implications of such primes for results along the lines of those in \cite{GV00}. We therefore have included a thorough analysis of the structure of the relevant local Galois cohomology groups at these primes inspired by \cite[Lemma 3.2]{PW11}, where the formula given for the local cohomology module is not quite correct as written. Moreover, we have identified the key finiteness assumption necessary for controlling the size contributions to local cohomology made by these primes whose omission is the source of an error in \cite[Proposition A.2]{PW11} (see the discussion preceding \cref{sel-surj} for more details). \\
\indent As we have already indicated, similar results to those in this paper in the case $F=\mathbf{Q}$ have been proved in \cite{GV00} and \cite{Gr10}, as well as in \cite{EPW}, and analogues for $F$ an imaginary quadratic field and $F_\infty$ the anticyclotomic $\mathbf{Z}_p$-extension are obtained in \cite{PW11}. Our approach and the formulation of our results follows most closely that of the first two references, in which the $\pi$-torsion of a non-primitive Selmer group is related to a residual Selmer group. We note that, while this paper does not literally apply to the Galois representations arising from $p$-adic Tate modules of $p$-ordinary elliptic curves over the number field $F$ (since such elliptic curves do not naively correspond to modular forms such as $f$ when $F\neq\mathbf{Q}$), the methods do apply to yield the naturally analogous results, even with somewhat simplified technical hypotheses. Slightly more speculatively, the methods of this paper should apply to the Selmer groups of all $2$-dimensional potentially ordinary $p$-adic Galois representations of $G_F$ under appropriate analogues of the hypotheses we utilize. In the interest of keeping notation reasonable, and in an effort to curb the proliferation of technical conditions required, we have focused on the case of classical newforms for $\GL_2(\mathbf{Q})$.\\
\indent This work was carried out in its initial form as part of the author's Ph.D. thesis, and I would like to thank my advisor Mirela \c{C}iperiani for valuable feedback on initial drafts. I also wish to thank Robert Pollack, Olivier Fouquet, and David Loeffler for enlightening discussions related to a recurring technical hypothesis (see the discussion preceding \cref{sel-surj} for more on this hypothesis). Finally, we should point out that, after this work was completed, we learned of the paper \cite{Hach11} in which a more refined and general version of \cref{main} is obtained by methods which are quite different from those in this paper (although the tools utilized, namely local and global duality theorems for Galois cohomology, are largely the same). Regarding this paper, see also the remarks following the proof of \cref{main}.\\
\indent We finish this introduction with a description of the individual sections of the main body of the paper. In \cref{notation-hyp} we introduce the relevant arithmetic data and set hypotheses and notation which are in force throughout the paper. In \cref{adic-selmer} we give the detailed definition of the $p$-adic Selmer group $\Sel(F_\infty,A)$ and introduce a condition \cref{HA} which plays an essential role in all of our main results. \Cref{local-coh} contains results on the structure of the local cohomology modules which arise in the definition of the Selmer group, including the crucial description in the case of a prime which splits completely in $F_\infty$ (\cref{split-local}). Non-primitive (residual) Selmer groups are introduced in \cref{non-prim-selmer}, which also lays the groundwork for relating the Selmer groups of congruent newforms (cf. especially \cref{torsion-Selmer-isom}). In \cref{global-to-local} we prove our main surjectivity theorem (\cref{sel-surj})  for the global-to-local map of Galois cohomology whose kernel equals the Selmer group and record some useful consequences. \Cref{nice-selmer} utilizes the surjectivity theorem to deduce our second main theorem on the non-existence of proper finite index $\Lambda$-submodules of appropriate non-primitive Selmer groups (\cref{no-subs-non-prim}), as well as a corollary giving a criterion for this Selmer group to be $\mathscr{O}$-divisible. Finally, in \cref{alg-inv}, we establish our final main result comparing $\lambda$-invariants of congruent newforms (\cref{main}). We have also included two brief appendices containing an assortment of basic results on $\Lambda$-modules which are certainly well-known to (and often used by) experts but for which we could find no convenient, self-contained reference. We felt it was worthwhile to give detailed proofs of these results for immediate reference and hope that these appendices may be beneficial for someone just beginning to delve into the literature on Selmer groups over $\mathbf{Z}_p$-extensions.
\section{Notation and Standing Hypotheses}
\label{notation-hyp}
\indent Fix an odd prime $p$ and embeddings $\iota_p:\overline{\mathbf{Q}}\hookrightarrow\overline{\mathbf{Q}}_p$ and $\iota_\infty:\overline{\mathbf{Q}}\hookrightarrow\mathbf{C}$. Fix also a finite extension $F/\mathbf{Q}$ and a $\mathbf{Z}_p$-extension $F_\infty/F$ (for $F=\mathbf{Q}$ and, more generally, $F$ totally real, there is conjecturally only one choice for $F_\infty$, but for number fields with complex primes there are conjecturally infinitely many choices, and this is known e.g. for abelian number fields as a consequence of Leopoldt's conjecture). We write $F_n$ for the unique subextension of $F_\infty/F$ of degree $p^n$ over $F$ and $G_n$ for $\Gal(F_n/F)$. When we speak of primes of an algebraic extension of $\mathbf{Q}$, we always mean finite primes unless explicitly noted otherwise. If $L$ is an algebraic extension of $F$ in $\overline{\mathbf{Q}}$ and $\eta$ is a prime of $L$, then we write $L_\eta$ for the direct limit of the fields $L^\prime_\eta$, where $L^\prime$ is a finite subextension of $L/F$ and $L^\prime_\eta$ denotes its completion at the prime below $\eta$. We also write $I_\eta$ for the inertia group of $G_{L_\eta}$. We impose the following two conditions on the set $\Sigma_p$ of primes of $F$ above $p$:
\begin{gather}
\label{Ramp}\tag{Ram$_{p,F}$}\text{For each $\mathfrak{p}\in\Sigma_p$, $\mu_p$ is a ramified $G_{F_\mathfrak{p}}$-module, and}\\
\label{NSp}\tag{NS$_p$}\text{no prime $\mathfrak{p}\in\Sigma_p$ splits completely in $F_\infty$.}
\end{gather}
The first condition is used in the proof of one of our main results (\cref{no-subs-non-prim}) and it is not clear to us whether it can be weakened; it holds for example if the ramification indices $e(\mathfrak{p}/p)$ of primes $\mathfrak{p}\in\Sigma_p$ are all less than $p-1$, but this is not strictly necessary, since, for example, we allow the possibility that some $F_\mathfrak{p}$ is $\mathbf{Q}_p$-isomorphic to $\mathbf{Q}_p[t]/(t^p-p)$, which has absolute ramification index $p$. 
The second condition is essential to our method; it holds for all cyclotomic $\mathbf{Z}_p$-extensions, or for example if $\Sigma_p$ has only one element, as then this prime must ramify in $F_\infty$, and hence cannot split completely.\\
%
\indent Having introduced our field data, we now turn to modular forms and the corresponding Galois representations. We fix a normalized newform $f=\sum_{n\geq 1}a_nq^n$ of weight $k\geq 2$, level $N$, and character $\chi$. We regard the Hecke eigenvalues $a_n$ and the character values as elements of the valuation ring of $\overline{\mathbf{Q}}_p$ via $\iota_p\circ\iota_\infty^{-1}$, and we fix as our field of coefficients a finite extension $K$ of $\mathbf{Q}_p$ with ring of integers $\mathscr{O}$, uniformizer $\pi$, and residue field $\mathbf{F}$, which contains these Hecke eigenvalues (it is then a fact that the values of $\chi$ are also in $\mathscr{O}$). When discussing Galois representations, we always write $\Frob_\ell$, $\Frob_v$, etc., to denote \emph{arithmetic} Frobenius automorphisms, and write $\epsilon:G_\mathbf{Q}\to\mathbf{Z}_p^\times$ for the $p$-adic cyclotomic character, as well as for the restriction of this character to closed subgroups corresponding to algebraic extensions of $\mathbf{Q}$ and to decomposition groups, which is a harmless abuse of notation as this restriction is then the $p$-adic cyclotomic character for the corresponding field. Let $\rho_f:G_\mathbf{Q}\rightarrow\GL_2(K)$ be the $p$-adic Galois representation associated to $f$; thus $\rho_f$ is unramified outside $pN$, and is characterized up to $\overline{\mathbf{Q}}_p$-isomorphism by the condition that for a rational prime $\ell\nmid pN$, the characteristic polynomial of $\rho_f(\Frob_\ell)$ is $X^2-a_\ell X+\chi(\ell)\ell^{k-1}$. Finally, we will denote by $\bar{\rho}_f:G_\mathbf{Q}\rightarrow\GL_2(\mathbf{F})$ the semisimple residual representation associated to $f$. Our modular form data is subject to the following assumptions:
\begin{gather}
\label{Ord}\tag{Ord$_p$} \text{$f$ is $p$-ordinary in the sense that $a_p$ is a $p$-adic unit,}\\
\label{Irr} \tag{Irr}\text{$\bar{\rho}_f\vert_{G_F}$ is absolutely irreducible, and}\\
\label{Rampf}\tag{Ram$_{p,f}$}\text{for each $\mathfrak{p}\in\Sigma_p$, $\overline{\rho}_f\vert_{G_F}$ is ramified at $\mathfrak{p}$.}
\end{gather}
The notion of ordinarity in \cref{Ord} actually depends on the choice of embedding used to regard the Fourier coefficients of $f$ as $p$-adic numbers, but as we have fixed such an embedding, this will not matter for us. The residual absolute irreducibility \cref{Irr} ensures that the integral structure on $\rho_f$ used to define the corresponding Selmer groups is unique up to homothety, even upon extending the coefficient field of the representation (this way we have a canonical Selmer group, instead of one which potentially depends upon the choice of $G_F$-stable $\mathscr{O}$-lattice in the representation space for $\rho_f$). Condition \cref{Rampf} will hold for example if $p$ is unramified in $F$, $k\not\equiv 1\pmod{p-1}$, and $\chi$ is unramified at $p$. This follows from the local structure of $\rho_f$ at $p$, which is given at the beginning \cref{adic-selmer} (the restriction of $\rho_f$ to a decomposition group at $p$ is potentially ordinary in the sense of Greenberg). This condition is used to ensure that a sufficiently non-primitive residual Selmer group for $f$ is determined up to isomorphism by $\overline{\rho}_f\vert_{G_F}$ (see \cref{max-unram} and \cref{intrinsic}), which is essential for our comparison of $\lambda$-invariants of $p$-ordinary forms which are congruent in the sense that their residual $G_F$-representations are isomorphic (\cref{main}).
\section{$p$-adic Selmer groups}
\label{adic-selmer}
\indent Let $V$ be a $2$-dimensional $K$-vector space with $G_\mathbf{Q}$-action via $\rho_f$, and fix a $G_F$-stable $\mathscr{O}$-lattice $T$ in $V$, setting $A=V/T$. Thus $A$ is a cofree $\mathscr{O}$-module of corank $2$ on which $G_F$ acts by $\rho_f$, and its Cartier dual $A^*=\Hom_\mathscr{O}(A,(K/\mathscr{O})(1))$ is a free $\mathscr{O}$-module of rank $2$. Since we have assumed $\bar{\rho}_f\vert_{G_F}$ to be absolutely irreducible in \cref{Irr}, the lattice $T$ is unique up to $\mathscr{O}$-scaling, and the residual representation $\bar{\rho}_f\vert_{G_F}$ is given by the action of $G_F$ on $A[\pi]\simeq T/\pi T$.\\
\indent Our assumption that $f$ is $p$-ordinary implies that for each prime $\mathfrak{p}\in\Sigma_p$, there is a $G_{F_\mathfrak{p}}$-stable line $V_\mathfrak{p}\subseteq V$ such that the $G_{F_\mathfrak{p}}$-action on $V_\mathfrak{p}$ is given by the product of $\epsilon^{k-1}\chi$ and an unramified character, and the $G_{F_\mathfrak{p}}$-action on $V/V_\mathfrak{p}$ is unramified \cite[\S 4.1]{EPW}. For $\mathfrak{p}\in\Sigma_p$, we set $A_\mathfrak{p}=\im(V_\mathfrak{p}\rightarrow A)$, so that $A_\mathfrak{p}$ and $A/A_\mathfrak{p}$ are both $\mathscr{O}$-cofree of corank $1$, and the action of $G_{F_\mathfrak{p}}$ on $A/A_\mathfrak{p}$ is unramified.\\
%
\indent For a prime $\mathfrak{P}$ of $F_\infty$ lying over $\mathfrak{p}\in\Sigma_p$, we define the ordinary submodule $H^1_{\ord}(F_{\infty,\mathfrak{P}},A)$ of $H^1(F_{\infty,\mathfrak{P}},A)$ to be
\begin{equation*}
\ker(H^1(F_{\infty,\mathfrak{P}},A)\rightarrow H^1(I_\mathfrak{P},A/A_\mathfrak{P}))\text{,}
\end{equation*}
where $A_\mathfrak{P}$ is defined to be  $A_\mathfrak{p}$ (and so only depends on $\mathfrak{p}$). Following \cite{GR89}, we then define the Selmer group $\Sel(F_\infty,A)$ for $f$ over $F_\infty$ as the kernel of the global-to-local restriction map
\begin{equation*}
H^1(F_\infty,A)\rightarrow\prod_{\eta\nmid p}\dfrac{H^1(F_{\infty,\eta},A)}{H^1_{\ur}(F_{\infty,\eta},A)}\times\prod_{\mathfrak{P}\mid p}\dfrac{H^1(F_{\infty,\mathfrak{P}},A)}{H^1_{\ord}(F_{\infty,\mathfrak{P}},A)}\text{,}
\end{equation*}
where $\eta$ (respectively $\mathfrak{P}$) runs over the primes of $F_\infty$ not dividing (respectively dividing) $p$, and for a prime $\eta$ of the former type,
\begin{equation*}
H^1_{\ur}(F_{\infty,\eta},A)=\ker(H^1(F_{\infty,\eta},A)\to H^1(I_\eta,A))
\end{equation*}
is the submodule of unramified cohomology classes in $H^1(F_{\infty,\eta},A)$. Note that, since we have assumed $p$ is odd, the local cohomology groups for the Archimedean primes of $F$ vanish, so we may, and do, ignore them.\\
\indent In \cite{GR89}, in addition to the Selmer group for ordinary $p$-adic Galois representations, one can also consider the (\emph{a priori} smaller) \emph{minimal} Selmer group, requiring cocycles to be trivial away from $p$ instead of unramified (but keeping the same local conditions at primes dividing $p$). We can define the minimal Selmer group for $f$ over $F_\infty$ in the analogous way (replacing decomposition groups with inertia groups at the primes not dividing $p$). For a prime $\eta$ of $F_\infty$ lying over $v\notin\Sigma_p$, if $v$ does not split completely in $F_\infty$, then $G_{F_{\infty,\eta}}/I_\eta$ has pro-order prime to $p$, and as a result, the restriction homomorphism $H^1(F_{\infty,\eta},A)\rightarrow H^1(I_\eta,A)$ is injective. Thus, for such a prime $\eta$, the minimal local condition coincides with the unramified local condition. Therefore the Selmer group for $f$ over $F_\infty$ equals the minimal Selmer group when no prime of $F$ splits completely in $F_\infty$ (e.g. when $F_\infty$ is the cyclotomic $\mathbf{Z}_p$-extension of $F$), but these groups may differ otherwise. We do not study minimal Selmer groups in this paper (they are studied under certain conditions in \cite{PW11}), and will be content with the following result, which shows that the local conditions for the two groups can only differ at primes of $F_\infty$ lying above a prime of $F$ dividing the prime-to-$p$ part of $N$ (i.e. the tame level of $f$).
\begin{prop}
\label{strict-Greenberg}
If $v\nmid pN$ is a prime of $F$, then $\ker(H^1(F_v,A)\rightarrow H^1(I_v,A))=0$. Thus, if $v$ splits completely in $F_\infty$, then for any prime $\eta$ of $F_\infty$ lying over $v$, the minimal local condition and the unramified local condition at $\eta$ coincide. In particular, if every prime of $F$ dividing the level of $f$ is finitely decomposed in $F_\infty$, then the Selmer group and the minimal Selmer group coincide.
\end{prop}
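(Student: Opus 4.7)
The plan is to prove the first assertion, that $\ker(H^1(F_v,A) \to H^1(I_v,A)) = H^1_{\ur}(F_v,A)$ vanishes for $v \nmid pN$, and then derive the remaining statements formally. Since $v \nmid pN$ the $G_F$-representation on $A$ is unramified at $v$, so $A^{I_v} = A$; inflation--restriction for $I_v \triangleleft G_{F_v}$ identifies the kernel in question with $H^1(\Gal(F_v^{\ur}/F_v), A)$. The quotient $\Gal(F_v^{\ur}/F_v)$ is procyclic with topological generator $\Frob_v$, and $A$ is a discrete $p$-power-torsion $\mathscr{O}$-module, so a standard continuous-cohomology calculation identifies this with the coinvariants $A/(\Frob_v-1)A$. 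The task is thus to show $\Frob_v-1$ is surjective on $A$.

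For this I would first verify that $\Frob_v - 1$ acts invertibly on the ambient $K$-vector space $V$. By Deligne's proof of the Ramanujan--Petersson conjecture, the eigenvalues of $\rho_f(\Frob_v)$ are algebraic numbers of complex absolute value $N(v)^{(k-1)/2}$, which is strictly greater than $1$ because $k \geq 2$ and $N(v) \geq 2$; hence $1$ is not an eigenvalue. Next, apply the snake lemma to the commutative diagram whose rows are the defining sequence $0 \to T \to V \to A \to 0$ and whose vertical maps are multiplication by $\Frob_v - 1$. The middle column has vanishing kernel and cokernel, and the resulting six-term exact sequence forces $A/(\Frob_v-1)A = 0$, as required.

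For the second sentence, if $v$ splits completely in $F_\infty$ and $\eta$ lies above $v$, then $F_{\infty,\eta} = F_v$ and $I_\eta = I_v$, so what was just proved gives $H^1_{\ur}(F_{\infty,\eta},A) = 0$, which trivially coincides with the minimal local condition at $\eta$. For the third sentence, I combine this with the observation already recorded in the paragraph preceding the proposition: at any prime $\eta$ of $F_\infty$ above a $v \nmid p$ that is only finitely decomposed, the restriction map $H^1(F_{\infty,\eta},A) \to H^1(I_\eta,A)$ is injective, so the unramified and minimal local conditions coincide there as well. Under the hypothesis that every prime of $F$ dividing $N$ is finitely decomposed in $F_\infty$, each non-$p$-adic prime $\eta$ of $F_\infty$ falls into one of these two cases (the first when $\eta$ lies above a prime $v \nmid N$ that splits completely, the second otherwise), so the local conditions match everywhere and $\Sel(F_\infty,A)$ coincides with its minimal analogue.

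The only non-elementary input is Deligne's bound; the rest is a routine assembly of inflation--restriction, the cohomology of $\widehat{\mathbf{Z}}$, and the snake lemma, so I do not anticipate any substantive obstacle.
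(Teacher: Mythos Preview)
Your proof is correct and follows essentially the same approach as the paper: both identify $H^1_{\ur}(F_v,A)$ with $A/(\Frob_v-1)A$ and use the Weil-number bounds on the Frobenius eigenvalues to show that $\Frob_v-1$ is invertible on $V$, then deduce vanishing on $A$. The paper concludes via divisibility plus a corank comparison with $H^0(F_v,A)$ rather than your direct snake-lemma argument, but this is a cosmetic difference.
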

\begin{proof}
The $\mathscr{O}$-corank of $H^1_{\ur}(F_v,A)=\ker(H^1(F_v,A)\rightarrow H^1(I_v,A))$ is the same as that of $H^0(F_v,A)$. Moreover, since $v\nmid pN$, $A$ is an unramified $G_{F_v}$-module, so the module $H^1_{\ur}(F_v,A)=A/(\Frob_v-1)A$ is $\mathscr{O}$-divisible and $H^0(F_v,A)=A^{\Frob_v=1}$. Now, if $\ell$ is the rational prime of $\mathbf{Q}$ lying below $v$, then the eigenvalues of $\Frob_\ell$ on $V$ are Weil numbers of weight $(k-1)/2$. Since $k\geq 2$, we see that, in particular, these eigenvalues are not roots of unity. The eigenvalues of $\Frob_v$ on $V$ are powers of the eigenvalues of $\Frob_\ell$ since $\rho_f(\Frob_v)$ is conjugate to a power of $\rho_f(\Frob_\ell)$. Thus $1$ is not an eigenvalue of $\Frob_v$ on $V$, so $V^{\Frob_v=1}=0$. It follows that $A^{\Frob_v=1}$ has $\mathscr{O}$-corank zero. The same is then true of $H^1_{\ur}(F_v,A)$, which is therefore $\mathscr{O}$-divisible and finite, and hence trivial, proving the first statement. In light of the discussion preceding the proposition, it follows that the only primes $w$ of $F_\infty$ where the local conditions for the Selmer group and the minimal Selmer group can differ are those lying over a prime $v$ of $F$ that divides the prime-to-$p$ part of $N$ and splits completely in $F_\infty$. So, if there are no such primes, then the Selmer group and the minimal Selmer group must coincide.
\end{proof}
\indent We now introduce the Iwasawa algebra $\Lambda=\mathscr{O}[[\Gamma]]$ of $\Gamma$ with coefficients in $\mathscr{O}$ (we will use this notation for the remainder of the paper). The Galois group $G_F$ acts (via conjugation) on the $\mathscr{O}$-module $H^1(F_\infty,A)$ with $G_{F_\infty}$ acting trivially, so this action allows us to regard the global cohomology group as a discrete $\mathscr{O}$-torsion $\Lambda$-module. The Selmer group $\Sel(F_\infty,A)$ is a $G_F$-stable, $\mathscr{O}$-submodule of $H^1(F_\infty,A)$, so it too is a discrete $\mathscr{O}$-torsion $\Lambda$-module. Moreover, if $\Sigma$ is a finite set of primes of $F$ containing the Archimedean primes, the primes in $\Sigma_p$, and the primes where $A$ is ramified, then we have an exact sequence
\begin{equation}
\label{sl}
0\rightarrow \Sel(F_\infty,A)\rightarrow H^1(F_\Sigma/F_\infty,A)\rightarrow\prod_{\eta\mid v\in\Sigma-\Sigma_p}\dfrac{H^1(F_{\infty,\eta},A)}{H^1_{\ur}(F_{\infty,\eta},A)}\times\prod_{\mathfrak{P}\mid p}\dfrac{H^1(F_{\infty,\mathfrak{P}},A)}{H^1_{\ord}(F_{\infty,\mathfrak{P}},A)}\text{.}
\end{equation}
According to \cite[Proposition 3]{GR89}, $H^1(F_\Sigma/F_\infty,A)$ is a cofinitely generated $\Lambda$-module, so the sequence \cref{sl} implies that $\Sel(F_\infty,A)$ is cofinitely generated as well, i.e., its $\mathscr{O}$-module Pontryagin dual $\widehat{\Sel(F_\infty,A)}=\Hom_\mathscr{O}(\Sel(F_\infty,A),K/\mathscr{O})$ is a finitely generated $\Lambda$-module (for a more thorough discussion of Pontryagin duality in the context of $\Lambda$-modules, see the beginning of \cref{app-a}). The $\Lambda$-corank (respectively the $\mu$, $\lambda$-invariant) of $\Sel(F_\infty,A)$ is the $\Lambda$-rank (respectively the $\mu$, $\lambda$-invariant) of its Pontryagin dual, as defined in \cref{iwasawa-inv-defn}. We will similarly speak of the Iwasawa invariants of any of the $\Lambda$-modules appearing in what follows. We write $\mu(f)$ and $\lambda(f)$ for the Iwasawa invariants of $\Sel(F_\infty,A)$, and also refer to them as the Iwasawa invariants of $f$ (over $F_\infty$, though we omit $F_\infty$ from the notation as it is fixed). Because we will use the condition on the set $\Sigma$ in the exact sequence \cref{sl} repeatedly, we formalize it here for convenient reference. A finite set $\Sigma$ of primes of $F$ will be said to be \emph{sufficiently large} for $A$ provided $\Sigma$ contains the Archimedean primes, the primes above $p$, and any primes where $A$ is ramified:
\begin{equation}
\label{Suff}\tag{Suff} \text{$\Sigma$ contains all $v\mid\infty$, all $\mathfrak{p}\in\Sigma_p$, and all primes where $A$ is ramified.}
\end{equation}
\indent All the comparison results for Iwasawa invariants of elliptic curves and modular forms which inspired this work, and accordingly our comparison result \cref{main}, will make use of the following condition:
\begin{equation}
\label{HA}
\tag{Cot}
\Sel(F_\infty,A)\text{ is cotorsion over }\Lambda\text{.}
\end{equation}
(See \cref{iwasawa-inv-defn} for the definition of a cotorsion $\Lambda$-module if it is unfamiliar.) When $f$ corresponds to an elliptic curve $E$ over $\mathbf{Q}$ with good, ordinary reduction at the primes of $\Sigma_p$ and $F_\infty$ is the cyclotomic $\mathbf{Z}_p$-extension of $F$, \cref{HA} was conjectured by Mazur in \cite{M}. For $F=\mathbf{Q}$, and more generally $F$ abelian over $\mathbf{Q}$, Mazur's conjecture follows from deep work of Kato and Rohrlich. In the case of the anticyclotomic $\mathbf{Z}_p$-extension of an imaginary quadratic $F$, with $p\geq 5$, \cref{HA} has been proved by Pollack and Weston (\cite[Theorem 1.3]{PW11}) for the Selmer groups of newforms of weight $2$ and trivial character, under some technical hypotheses on $\bar{\rho}_f$ and the factorization of the level of $f$ in $F$. It appears that little is known about the validity of \cref{HA} beyond these cases, but given what is known, it seems reasonable to expect the condition to hold in some ``sufficiently ordinary" situations beyond the cyclotomic and anticyclotomic cases. In any case, trying to compare structural invariants of Selmer groups over $\mathbf{Z}_p$-extensions for Galois representations with isomorphic residual representations in the absence of \cref{HA} will almost definitely require completely new methods, as the condition seems inextricably central to all currently known strategies for achieving such comparisons. Certainly this is true of our approach. (Even the matter of whether or not the naive definition of Iwasawa invariants is the correct one in non-cotorsion situations is somewhat unclear to the author.)\\
\indent In order to obtain more refined information about the structure of $\Sel(F_\infty,A)$, we need more detailed descriptions of the local cohomology groups appearing in its definition. Acquiring these descriptions is the goal of the next section.
\section{The $\Lambda$-module structure of local cohomology groups}
\label{local-coh}
\indent For a prime $v\nmid p$ of $F$, we define
\begin{equation*}
\mathcal{H}_v=\mathcal{H}_v(F_\infty,A)=\varinjlim_n\prod_{w\in\Sigma_{n,v}}\dfrac{H^1(F_{n,w},A)}{H^1_{\ur}(F_{n,w},A)}\text{,}
\end{equation*}
where $\Sigma_{n,v}$ is the set of primes of $F_n$ lying over $v$ and the limit is taken with respect to the restriction maps. For a prime $\mathfrak{p}$ of $\Sigma_p$, we define
\begin{equation*}
\mathcal{H}_\mathfrak{p}=\mathcal{H}_\mathfrak{p}(F_\infty,A)=\prod_{\mathfrak{P}\in\Sigma_{\infty,\mathfrak{p}}}
\dfrac{H^1(F_{\infty,\mathfrak{P}},A)}{H^1_{\ord}(F_{\infty,\mathfrak{P}},A)}\text{,}
\end{equation*}
where $\Sigma_{\infty,\mathfrak{p}}$ is the set of primes of $F_\infty$ lying over $\mathfrak{p}$, which is finite by our assumption \cref{NSp} that all such $\mathfrak{p}$ are finitely decomposed in $F_\infty$. Note that these torsion $\mathscr{O}$-modules are in fact discrete $\Lambda$-modules. For convenience of notation, we will define $\mathcal{H}_v=0$ for $v$ Archimedean, and subsequently forget about Archimedean primes. 
\begin{prop}
\label{alt-local-cond}
For any finite set $\Sigma$ of primes of $F$ satisfying \emph{\cref{Suff}}, the sequence of $\Lambda$-modules
\begin{equation*}
0\rightarrow\Sel(F_\infty,A)\rightarrow H^1(F_\Sigma/F_\infty,A)\rightarrow\prod_{v\in\Sigma-\Sigma_p}\mathcal{H}_v\times\prod_{\mathfrak{p}\in\Sigma_p}
\mathcal{H}_\mathfrak{p}
\end{equation*}
is exact.
\end{prop}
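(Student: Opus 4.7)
The plan is to derive this directly from the sequence \cref{sl} of \cref{adic-selmer}, the only task being to reorganize the local terms on the right as $\mathcal{H}_v$ (for $v\in\Sigma-\Sigma_p$) and $\mathcal{H}_\mathfrak{p}$ (for $\mathfrak{p}\in\Sigma_p$) while preserving the kernel of the global-to-local map.

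For $\mathfrak{p}\in\Sigma_p$ this is tautological: by \cref{NSp} only finitely many primes of $F_\infty$ lie above $\mathfrak{p}$, and $\mathcal{H}_\mathfrak{p}$ is by definition the corresponding finite product already appearing in \cref{sl}. For $v\in\Sigma-\Sigma_p$, using the continuity identification $H^1(F_\Sigma/F_\infty,A)=\varinjlim_n H^1(F_\Sigma/F_n,A)$, I would define $r_v:H^1(F_\Sigma/F_\infty,A)\to\mathcal{H}_v$ by taking a finite-level representative $\tilde c$ at some level $n$ of a class $c$ and sending it to the class of the tuple $(\tilde c\vert_w)_{w\in\Sigma_{n,v}}$ in the direct limit defining $\mathcal{H}_v$. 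It remains to check that $c\in\ker(r_v)$ for every $v\in\Sigma-\Sigma_p$ (together with the analogous vanishing at each $\mathfrak{p}\in\Sigma_p$) is equivalent to $c$ satisfying the Selmer local condition at every prime of $F_\infty$ above $v$.

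One direction is transparent: if a level-$m$ representative of $r_v(c)$ has every coordinate in $H^1_{\ur}$, then further restriction to any $\eta\mid v$ lands in $H^1_{\ur}(F_{\infty,\eta},A)$. For the converse, given $c\in\Sel(F_\infty,A)$ represented at level $n$ by $\tilde c$, one must produce a single $m\geq n$ so that every coordinate of the level-$m$ tuple lies in $H^1_{\ur}$. For each $w\in\Sigma_{n,v}$ (a finite set) the closed decomposition group of $w$ in $\Gamma_n=\Gal(F_\infty/F_n)\cong\mathbf{Z}_p$ is either open or trivial: in the open (finitely decomposed) case one combines the finitely many levels $m_\eta$ witnessing the Selmer condition for each of the finitely many $\eta\mid w$, while in the trivial (completely split) case $F_{n,w}=F_{\infty,\eta}$ and the Selmer condition for one such $\eta$ already forces $\tilde c\vert_w\in H^1_{\ur}(F_{n,w},A)$. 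Taking the maximum of these levels over the finite set $\Sigma_{n,v}$ yields a single $m$ that works.

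The main subtle point is the complete-splitting case, where infinitely many primes $\eta$ of $F_\infty$ lie above a single $w$; the argument succeeds only because the local completions collapse, allowing all level-$m$ coordinates above $w$ to be controlled simultaneously from the single condition at level $n$. A more refined analysis of $\mathcal{H}_v$ in this situation is precisely the business of the remainder of \cref{local-coh}.
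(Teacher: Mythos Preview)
Your argument is correct and pursues the same overall strategy as the paper: compare the Selmer local conditions at primes $\eta\mid v$ of $F_\infty$ with the vanishing of the image in $\mathcal{H}_v$ via a finite-level representative. The paper's execution is shorter, though, because it replaces your case analysis and passage to higher levels $m\geq n$ with a single observation: for $v\notin\Sigma_p$ the extension $F_{\infty,\eta}/F_{n,w}$ is \emph{unramified}, so the restriction map
\[
H^1(F_{n,w},A)/H^1_{\ur}(F_{n,w},A)\longrightarrow H^1(F_{\infty,\eta},A)/H^1_{\ur}(F_{\infty,\eta},A)
\]
is injective. Thus $c$ is unramified at $\eta$ if and only if its level-$n$ representative $\tilde c$ is already unramified at the prime $w$ of $F_n$ below $\eta$, with no need to climb to any $m_\eta>n$ and no need to distinguish the finitely decomposed and completely split cases. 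Your direct-limit maneuver in the finitely decomposed case is valid, but this injectivity is really the organizing principle; once you have it, the ``subtle point'' you isolate in the split case dissolves as well, since the same injectivity statement (now with equal source and target) covers it.
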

\begin{proof}
Any cohomology class $\kappa\in H^1(F_\infty,A)$ arises as the restriction of a cohomology class $\kappa_n\in H^1(F_n,A)$ for some $n\geq 0$. If $\eta$ is a prime of $F_\infty$ lying over $v\notin\Sigma_p$, and $w\in\Sigma_{n,v}$, then the restriction map $H^1(F_{n,w},A)/H^1_{\ur}(F_{n,w},A)\rightarrow H^1(F_{\infty,\eta},A)/H^1_{\ur}(F_{\infty,\eta},A)$ is injective because $F_{\infty,\eta}/F_{n,w}$ is unramified. The commutative diagram
\begin{equation*}
\xymatrix{H^1(F_\infty,A) \ar[r] & \dfrac{H^1(F_{\infty,\eta},A)}{H^1_{\ur}(F_{\infty,\eta},A)}\\
H^1(F_n,A) \ar[u] \ar[r] & \dfrac{H^1(F_{n,w},A)}{H^1_{\ur}(F_{n,w},A)}\ar[u]}
\end{equation*}
of restriction maps then shows that $\kappa$ is unramified at $\eta$ if and only if $\kappa_n$ is unramified at $w$. This shows that the kernel of $H^1(F_\infty,A)\rightarrow\prod_{\eta\mid v}H^1(F_{\infty,\eta},A)/H^1_{\ur}(F_{\infty,\eta},A)$ coincides with the kernel of $H^1(F_\infty,A)\rightarrow\mathcal{H}_v$ (the latter map sends $\kappa$ to the natural image of $\kappa_n$ in $\mathcal{H}_v$). In view of the definition of $\mathcal{H}_\mathfrak{p}$ for $\mathfrak{p}\in\Sigma_p$, we conclude that $\Sel(F_\infty,A)$ is exactly the kernel in  question.
\end{proof}
\indent We've introduced the modules $\mathcal{H}_v$ when $v\notin\Sigma_p$ to deal with the possibility that $v$ splits completely in $F_\infty$. For such a $v$, the product of the local cohomology groups over all primes of $F_\infty$ lying over $v$ is not easily comprehended as a $\Lambda$-module (and if not zero, is probably hopelessly large). The $\Lambda$-module structure of $\mathcal{H}_v$, on the other hand, can be understood. When $v$ is finitely decomposed in $F_\infty$, $\mathcal{H}_v$ is just a product of local cohomology groups, and the structure of these groups has been determined by Greenberg.
\begin{prop}
\label{fd-structure}
For a prime $v\notin\Sigma_p$ of $F$, let $\Sigma_{\infty,v}$ denote the set of primes of $F_\infty$ lying above $v$.
\label{local-non-split}
\begin{enumerate}
\item For a prime $v\notin\Sigma_p$ that is finitely decomposed in $F_\infty$, we have
\begin{equation*}
\mathcal{H}_v\simeq\prod_{\eta\in\Sigma_{\infty,v}}H^1(F_{\infty,\eta},A)
\end{equation*}
as $\Lambda$-modules, and $\mathcal{H}_v$ is a cofinitely generated, cotorsion $\Lambda$-module with $\mu$-invariant zero and $\lambda$-invariant
\begin{equation*}
\sum_{\eta\in\Sigma_{\infty,v}}\corank_{\mathscr{O}}(H^1(F_{\infty,\eta},A))\text{.}
\end{equation*}
\item For a prime $\mathfrak{p}\in\Sigma_p$, $\mathcal{H}_\mathfrak{p}$ is a cofinitely generated $\Lambda$-module with $\Lambda$-corank $[F_\mathfrak{p}:\mathbf{Q}_p]$ and $\mu$-invariant zero.
\end{enumerate}
\end{prop}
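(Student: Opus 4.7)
My plan is to handle both parts by a common template: identify $\mathcal{H}_v$ and $\mathcal{H}_\mathfrak{p}$ as $\Lambda$-modules induced (via Shapiro-style base change) from the local cohomology at a single chosen prime of $F_\infty$, and then reduce each claim to standard local Iwasawa theory at that single prime. Concretely, I fix a base prime $\eta_0 \in \Sigma_{\infty,v}$ (resp.\ $\mathfrak{P}_0 \in \Sigma_{\infty,\mathfrak{p}}$) with decomposition group $\Gamma_{\eta_0}\subset\Gamma$; under the finite decomposition hypothesis (resp.\ \cref{NSp}), this is an open subgroup, hence isomorphic to $\mathbf{Z}_p$. Transitivity of the $\Gamma$-action on primes above $v$ identifies the target product with $\Lambda\otimes_{\Lambda_{\eta_0}}(\text{local factor at }\eta_0)$, where $\Lambda_{\eta_0}=\mathscr{O}[[\Gamma_{\eta_0}]]$. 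Since $\Lambda$ is free of finite rank $|\Sigma_{\infty,v}|=[\Gamma:\Gamma_{\eta_0}]$ over $\Lambda_{\eta_0}$, this base change preserves cofinite generation, cotorsion status, and $\mu$-invariant, while multiplying the $\mathscr{O}$-corank (and $\lambda$) by $[\Gamma:\Gamma_{\eta_0}]$, which gives exactly the stated sum $\sum_\eta\corank_\mathscr{O}(\cdot)$ in part (i).

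For part (i), the identification $\mathcal{H}_v\simeq\prod_{\eta}H^1(F_{\infty,\eta},A)$ reduces to showing that the unramified classes vanish in the direct limit. The crucial input is the local class field theory observation that every $\mathbf{Z}_p$-extension of $F_v$ with $v\nmid p$ is necessarily unramified, since the pro-$p$-part of $F_v^\times$ is $\mathbf{Z}_p\oplus(\text{finite})$, generated up to torsion by a uniformizer. Combined with the standard fact $\varinjlim_n H^1(F_{n,w_n},A)=H^1(F_{\infty,\eta},A)$ for $w_n=\eta\cap F_n$ (cohomology commuting with direct limits along descending chains of closed subgroups), the image of $\varinjlim H^1_{\ur}(F_{n,w_n},A)$ inside $H^1(F_{\infty,\eta},A)$ automatically lies in $H^1_{\ur}(F_{\infty,\eta},A)=H^1(\Gal(F_v^{\ur}/F_{\infty,\eta}),A^{I_v})$. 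Since $F_{\infty,\eta}$ is the unique $\mathbf{Z}_p$-subextension of $F_v^{\ur}$, the relevant Galois group is $\widehat{\mathbf{Z}}/\mathbf{Z}_p\simeq\prod_{\ell\ne p}\mathbf{Z}_\ell$, which is prime-to-$p$ and so kills any $H^1$ with $p$-primary coefficients. The remaining claims about $H^1(F_{\infty,\eta_0},A)$ as a $\Lambda_{\eta_0}$-module---cofinite generation, cotorsion, and $\mu=0$---then follow from standard local Iwasawa theory at primes away from $p$: local Tate duality, the finite $\mathscr{O}$-coranks of $H^0(F_{\infty,\eta_0},A)$ and of $H^2(F_{\infty,\eta_0},A)\simeq H^0(F_{\infty,\eta_0},A^*)^\vee$, and the local Euler--Poincar\'e characteristic together bound the $\mathscr{O}$-corank of $H^1(F_{\infty,\eta_0},A)$ and force it to be $\mathscr{O}$-cofinitely generated.

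For part (ii), the short exact sequence $0\to A_\mathfrak{p}\to A\to A/A_\mathfrak{p}\to 0$ gives a long exact cohomology sequence over $F_{\infty,\mathfrak{P}_0}$ which, combined with the inflation--restriction sequence for $I_{\mathfrak{P}_0}\subset G_{F_{\infty,\mathfrak{P}_0}}$ implicit in the definition of $H^1_{\ord}$, identifies $H^1_{\ord}(F_{\infty,\mathfrak{P}_0},A)$ with the image of $H^1(F_{\infty,\mathfrak{P}_0},A_\mathfrak{p})$ up to $\Lambda_{\mathfrak{P}_0}$-modules of corank zero and $\mu=0$ coming from $H^0$ and $H^2$ terms. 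The local Iwasawa Euler--Poincar\'e formula at primes above $p$ gives $\corank_{\Lambda_{\mathfrak{P}_0}}H^1(F_{\infty,\mathfrak{P}_0},M)=[F_\mathfrak{p}:\mathbf{Q}_p]\corank_\mathscr{O}(M)$ for any cofinitely generated $p$-primary $G_{F_\mathfrak{p}}$-module $M$; applied to $A$ ($\mathscr{O}$-corank $2$) and $A_\mathfrak{p}$ ($\mathscr{O}$-corank $1$) and subtracted, this yields the stated $\Lambda$-corank $[F_\mathfrak{p}:\mathbf{Q}_p]$. The $\mu=0$ claim reduces, via the unramified character quotient $A/A_\mathfrak{p}$, to the standard vanishing of $\mu$ for local Galois cohomology over a $\mathbf{Z}_p$-extension with unramified $\mathscr{O}$-cofree coefficients. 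The main obstacle I anticipate is in part (ii): carefully tracking that the various corank-zero correction terms from the long exact sequence and the inflation--restriction for $I_{\mathfrak{P}_0}$ all carry $\mu=0$, so that they do not spoil the displayed corank and $\mu$-invariant calculations.
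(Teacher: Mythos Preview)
Your proposal is correct in substance and arrives at the same conclusions, but the route differs from the paper's. The paper's proof is essentially a two-line citation: for part (i) it observes that $|\Sigma_{n,v}|$ stabilizes for large $n$ so the direct limit becomes a finite product, notes $H^1_{\ur}(F_{\infty,\eta},A)=0$ (this was already established in the discussion preceding the proposition, via the pro-order of $G_{F_{\infty,\eta}}/I_\eta$ being prime to $p$), and then invokes \cite[Proposition~2]{GR89} for the $\Lambda$-module structure; for part (ii) it simply cites \cite[Proposition~1]{GR89}. Your argument, by contrast, unpacks the content of those Greenberg propositions---the Shapiro/induction identification $\mathcal{H}_v\simeq\Lambda\otimes_{\Lambda_{\eta_0}}(\text{single factor})$, the long exact sequence from the ordinary filtration, and the local Euler--Poincar\'e computation. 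What you gain is a self-contained argument; what the paper gains is brevity and a clean separation between the general local-Iwasawa input and its application here.

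One caution on your part (i) sketch: you invoke ``local Tate duality'' and the Euler--Poincar\'e formula directly over $F_{\infty,\eta_0}$, but these do not apply verbatim since $F_{\infty,\eta_0}$ is of infinite degree over $\mathbf{Q}_\ell$. The cleaner route (and what underlies Greenberg's Proposition~2) is to use that $G_{F_{\infty,\eta_0}}$ has $p$-cohomological dimension~$1$, so $H^2(F_{\infty,\eta_0},A)=0$ outright, and then to control $H^1(F_{\infty,\eta_0},A)[\pi]$ via $H^1(F_{\infty,\eta_0},A[\pi])$ and a limit argument from the finite layers---this yields $\mathscr{O}$-cofinite generation directly, hence $\Lambda_{\eta_0}$-cotorsion with $\mu=0$. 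Similarly in part (ii), your displayed formula $\corank_{\Lambda_{\mathfrak{P}_0}}H^1=[F_\mathfrak{p}:\mathbf{Q}_p]\corank_\mathscr{O}(M)$ is the \emph{Euler characteristic} identity and only becomes an equality for $H^1$ alone after you check (as you note in your final sentence) that the $H^0$ and $H^2$ contributions are $\Lambda_{\mathfrak{P}_0}$-cotorsion; this is exactly the content of \cite[Proposition~1]{GR89}, so you are reproving it rather than circumventing it.
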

\begin{proof}
The isomorphism for $v\notin\Sigma_p$ holds because the number of primes in $\Sigma_{n,v}$ is constant for $n$ sufficiently large (equal to the cardinality of $\Sigma_{\infty,v}$), as direct limits commute with finite products, and because $H^1_{\ur}(F_{\infty,\eta},A)=0$ for $\eta\in\Sigma_{\infty,v}$. The assertions about the $\Lambda$-module structure of the products of local cohomology groups are then given by Proposition 1 (for $\mathfrak{p}\in\Sigma_p$) and Proposition 2 (for $v\notin\Sigma_p$) of \cite{GR89}.
\end{proof}
\indent Now consider a prime $v\notin\Sigma_p$ that splits completely in $F_\infty$. Then we have an isomorphism $F_v\simeq F_{\infty,\eta}$ for any prime $\eta$ of $F_\infty$ lying over $v$, giving $H^1(F_v,A)\simeq H^1(F_\infty,A)$. The finiteness of $H^1(F_v,A[\pi])$ shows that $H^1(F_v,A)[\pi]$ is finite, hence that $H^1(F_v,A)$ is a cofinitely generated $\mathscr{O}$-module. In particular $H^1(F_v,A)/H^1_{\ur}(F_v,A)$ is a cofinitely generated $\mathscr{O}$-module, and by the local Euler characteristic formula, we have
\begin{equation}
\label{eulerchar}
\corank_\mathscr{O}(H^1(F_v,A)/H^1_{\ur}(F_v,A))=\rank_\mathscr{O}(H^0(F_v,A^*))\text{.}
\end{equation}
(see the beginning of \cref{adic-selmer} for the definition of the Cartier dual $A^*$).
The $\mathscr{O}$-module structure of $H^1(F_v,A)/H^1_{\ur}(F_v,A)$ completely determines the $\Lambda$-module structure of $\mathcal{H}_v$.
\begin{prop}
\label{split-local}
Let $v\notin\Sigma_p$ be a prime of $F$ that splits completely in $F_\infty$, and choose an isomorphism of $\mathscr{O}$-modules
\begin{equation*}
H^1(F_v,A)/H^1_{\ur}(F_v,A)\simeq(K/\mathscr{O})^r\oplus\sum_{i=1}^t\mathscr{O}/\pi^{m_i}\mathscr{O}
\end{equation*}
for some $r\geq 0$ and $m_i\geq 0$.
Then we have
\begin{equation}
\label{module-split}
\mathcal{H}_v\simeq\widehat{\Lambda}^r\oplus\sum_{i=1}^t\widehat{\Lambda/\pi^{m_i}\Lambda}
\end{equation}
as $\Lambda$-modules, so $\mathcal{H}_v$ is a cofinitely generated $\Lambda$-module with $\Lambda$-corank $\rank_\mathscr{O}(H^0(F_v,A^*))$, $\mu$-invariant $\sum_{i=1}^tm_i$, and $\lambda$-invariant zero.
\end{prop}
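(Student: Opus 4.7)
The plan is to exploit the fact that $v$ splitting completely in $F_\infty$ means all completions $F_{n,w}$ (for $w \mid v$ in $F_n$) are canonically identified with $F_v$, and that $G_n$ acts simply transitively on $\Sigma_{n,v}$. This reduces the structure of $\mathcal{H}_v$ to an ``induced module'' calculation, after which the given $\mathscr{O}$-module decomposition of $M := H^1(F_v,A)/H^1_{\ur}(F_v,A)$ directly produces the claimed decomposition over $\Lambda$.

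\textbf{Finite-level identification.} Fix a prime $\eta$ of $F_\infty$ above $v$ with restriction $\eta_n = \eta|_{F_n}$. The map $g \mapsto g\eta_n$ is a bijection $G_n \to \Sigma_{n,v}$, and since the Galois action $g : F_{n,\eta_n} \to F_{n, g\eta_n}$ is the unique $F_v$-algebra extension of $g|_F = \id$, each $M_w := H^1(F_{n,w},A)/H^1_{\ur}(F_{n,w},A)$ is canonically identified with $M$. This yields an $\mathscr{O}[G_n]$-equivariant isomorphism
\begin{equation*}
  \prod_{w \in \Sigma_{n,v}} M_w \;\simeq\; \mathscr{O}[G_n] \otimes_\mathscr{O} M,
\end{equation*}
with $G_n$ acting by left multiplication on $\mathscr{O}[G_n]$ and trivially on $M$. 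The restriction maps $H^1(F_{n,w},A) \to H^1(F_{n',w'},A)$ are identities (both completions being $F_v$), so in the direct limit the transition corresponds to tensoring $\id_M$ with the ``norm'' $\mathscr{O}[G_n] \to \mathscr{O}[G_{n'}]$ sending $g \mapsto \sum_{g' \mapsto g} g'$.

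\textbf{Pontryagin dualization and limit.} Dualize the direct system. Using the self-duality of $\mathscr{O}[G_n]$ as a left $\mathscr{O}[G_n]$-module (via the basis of group elements) and $\mathscr{O}$-linear adjunction, one obtains $\widehat{\mathscr{O}[G_n] \otimes_\mathscr{O} M} \simeq \mathscr{O}[G_n] \otimes_\mathscr{O} \widehat{M}$, and under this identification the dual of the norm map above becomes the natural projection $\mathscr{O}[G_{n'}] \twoheadrightarrow \mathscr{O}[G_n]$ tensored with $\id_{\widehat{M}}$. Hence $\widehat{\mathcal{H}_v} \simeq \varprojlim_n \bigl(\mathscr{O}[G_n] \otimes_\mathscr{O} \widehat{M}\bigr)$. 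Since $\widehat{M} \simeq \mathscr{O}^r \oplus \bigoplus_i \mathscr{O}/\pi^{m_i}\mathscr{O}$ is finitely presented over $\mathscr{O}$ and the system has surjective transitions, the inverse limit commutes with the tensor, so $\widehat{\mathcal{H}_v} \simeq \Lambda \otimes_\mathscr{O} \widehat{M} \simeq \Lambda^r \oplus \bigoplus_i \Lambda/\pi^{m_i}\Lambda$. Dualizing once more yields \cref{module-split}.

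\textbf{Invariants and main obstacle.} The $\Lambda$-invariants can then be read off the dual: $\Lambda$-corank $r = \corank_\mathscr{O} M = \rank_\mathscr{O} H^0(F_v, A^*)$ by \cref{eulerchar}, $\mu$-invariant $\sum_i m_i$, and $\lambda$-invariant zero. The main obstacle I anticipate is the bookkeeping in the second step: carefully verifying that the self-duality of $\mathscr{O}[G_n]$ exchanges the norm map (which arises naturally from cohomological restriction on the direct side) with the natural projection on the dual side, so that the inverse limit genuinely assembles to $\Lambda \otimes_\mathscr{O} \widehat{M}$.
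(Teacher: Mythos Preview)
Your proposal is correct and follows essentially the same route as the paper. The finite-level identification $\prod_{w\in\Sigma_{n,v}} M_w \simeq M\otimes_\mathscr{O}\mathscr{O}[G_n]$ with transition maps given by $\id_M\otimes\Cor_n$ is exactly what the paper establishes, and the remaining computation of the limit is what the paper packages as \cref{tensor-module}.

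The only organizational difference is that you dualize once at the outset and compute $\widehat{\mathcal{H}_v}\simeq\varprojlim_n(\mathscr{O}[G_n]\otimes_\mathscr{O}\widehat{M})\simeq\Lambda\otimes_\mathscr{O}\widehat{M}$ directly, handling the free and torsion summands of $\widehat{M}$ simultaneously via commutation of inverse limits with tensoring by a finitely presented $\mathscr{O}$-module. The paper instead stays on the discrete side and treats the $(K/\mathscr{O})$-summands and the $\mathscr{O}/\pi^{m_i}$-summands separately via \cref{double-lim} and \cref{discrete-lim}, each of which internally performs the same dualization. The bookkeeping you flag as the main obstacle---that the self-duality of the group ring exchanges corestriction with restriction---is precisely the content of \cref{self-dual} (stated there for $(\mathscr{O}/\pi^m)[G_n]$, but the same formulas work verbatim for $\mathscr{O}[G_n]\otimes_\mathscr{O} M$). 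Your packaging is a bit more streamlined; the paper's has the advantage of making the individual pieces reusable.
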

\begin{proof}
By definition, $\mathcal{H}_v=\varinjlim_n\prod_{w\in\Sigma_{n,v}}H^1(F_{n,w},A)/H^1_{\ur}(F_{n,w},A)$, with the limit taken with respect to the restriction maps. Because $v$ splits completely in $F_\infty$, and hence also in each layer of $F_\infty$, a choice of prime $w_n$ of $F_n$ lying over $v$ gives rise to an $\mathscr{O}[G_n]$-isomorphism
\begin{equation*}
\prod_{w\in\Sigma_{n,v}}H^1(F_{n,w},A)/H^1_{\ur}(F_{n,w},A)\simeq (H^1(F_v,A)/H^1_{\ur}(F_v,A))\otimes_\mathscr{O}\mathscr{O}[G_n]
\end{equation*}
(recall from \cref{notation-hyp} that $G_n=\Gal(F_n/F)$). Choosing the primes $w_n$ above $v$ compatibly as $n\rightarrow\infty$, these isomorphisms turn the transition maps defining $\mathcal{H}_v$ into the maps coming from corestriction on the right tensor factor (see \cref{app-a} for the definition of the corestriction maps between the group rings $\mathscr{O}[G_n]$). Thus we have a $\Lambda$-module isomorphism $\mathcal{H}_v\simeq\varinjlim_n (H^1(F_v,A)/H^1_{\ur}(F_v,A))\otimes_\mathscr{O}\mathscr{O}[G_n]$, and the isomorphism \cref{module-split} follows from \cref{tensor-module}. The Iwasawa invariants of $\mathcal{H}_v$ can be read off from this isomorphism, and the equality
\begin{equation*}
\corank_\Lambda(\mathcal{H}_v)=\rank_\mathscr{O}(H^0(F_v,A^*))
\end{equation*}
follows from the isomorphism and \cref{eulerchar}.
\end{proof}
%
\section{Non-primitive Selmer groups}
\label{non-prim-selmer}
\indent In this section, following Greenberg-Vatsal \cite{GV00}, we introduce non-primitive Selmer groups. A non-primitive Selmer group is defined by omitting some of the local conditions at primes of $F_\infty$ not dividing $\infty$ or $p$. If we omit enough local conditions, the $\pi$-torsion of the resulting non-primitive Selmer group for $f$ can be identified with the corresponding non-primitive residual Selmer group (to be defined below). To be precise, let $\Sigma_0$ be a finite set of primes of $F$ not containing any Archimedean primes or any primes of $\Sigma_p$. The $\Sigma_0$-non-primitive Selmer group $\Sel^{\Sigma_0}(F_\infty,A)$ is then defined as the kernel of the map
\begin{equation*}
H^1(F_\infty,A)\rightarrow\prod_{\eta\mid v\notin\Sigma_0,v\nmid p}\dfrac{H^1(F_{\infty,\eta},A)}{H^1_{\ur}(F_{\infty,\eta},A)}\times\prod_{\mathfrak{P}\mid p}\dfrac{
H^1(F_{\infty,\mathfrak{P}},A)}{H^1_{\ord}(F_{\infty,\mathfrak{P}},A)}\text{.}
\end{equation*}
As there is generally no risk of confusion about $\Sigma_0$ we will sometimes refer to $\Sel^{\Sigma_0}(F_\infty,A)$ simply as the non-primitive Selmer group (for $f$ over $F_\infty$). If $\Sigma$ is a finite set of primes of $F$ satisfying \cref{Suff} which also contains $\Sigma_0$, then by \cref{alt-local-cond} together with the definitions, we have exact sequences of $\Lambda$-modules
\begin{equation*}
0\rightarrow\Sel^{\Sigma_0}(F_\infty,A)\rightarrow H^1(F_\Sigma/F_\infty,A)\rightarrow\prod_{v\in\Sigma-\Sigma_0}\mathcal{H}_v
\end{equation*}
and
\begin{equation*}
0\rightarrow\Sel(F_\infty,A)\rightarrow\Sel^{\Sigma_0}(F_\infty,A)\rightarrow\prod_{v\in\Sigma_0}\mathcal{H}_v\text{.}
\end{equation*}
We will show in \cref{sel-surj} below that, under appropriate hypotheses, these sequences are exact on the right as well. \\
\indent We now define a non-primitive Selmer group for the residual representation $A[\pi]$, denoted $\Sel^{\Sigma_0}(F_\infty,A[\pi])$, in a manner analogous to the non-primitive Selmer group for $A$. Its definition is designed for comparison with the $\pi$-torsion of $\Sel^{\Sigma_0}(F_\infty,A)$. For a prime $\mathfrak{P}$ of $F_\infty$ lying over some $\mathfrak{p}\in\Sigma_p$, set
\begin{equation*}
H_{\ord}^1(F_{\infty,\mathfrak{P}},A[\pi])=\ker(H^1(F_{\infty,\mathfrak{P}},A[\pi])\rightarrow H^1(I_\mathfrak{P},A[\pi]/A_\mathfrak{P}[\pi]))\text{.}
\end{equation*}
(Recall that $A_\mathfrak{P}=A_\mathfrak{p}$ was defined at the beginning of \cref{adic-selmer} using the assumption \cref{Ord} imposed on our newform $f$.)
Then we define $\Sel^{\Sigma_0}(F_\infty,A[\pi])$ as the kernel of the map
\begin{equation*}
H^1(F_\infty,A[\pi])\rightarrow\prod_{\eta\mid v\notin\Sigma_0,v\nmid p}\dfrac{H^1(F_{\infty,\eta},A[\pi])}{H^1_{\ur}(F_{\infty,\eta},A[\pi])}
\times\prod_{\mathfrak{P}\mid p}\dfrac{H^1(F_{\infty,\mathfrak{P}},A[\pi])}{H^1_{\ord}(F_{\infty,\mathfrak{P}},A[\pi])}\text{,}
\end{equation*}
where as before the subscript $\ur$ indicates the submodule of unramified cohomology classes.\\
\indent The next proposition will involve the space $A[\pi]_{I_\mathfrak{p}}$ of $I_\mathfrak{p}$-coinvariants of $A[\pi]$. This is the largest $\mathbf{F}[G_{F_\mathfrak{p}}]$-quotient of $A[\pi]$ on which $I_\mathfrak{p}$ acts trivially. More explicitly, it is the quotient of $A[\pi]$ by the $\mathbf{F}[G_{F_\mathfrak{p}}]$-submodule generated by elements of the form $ga-a$ for $g\in I_\mathfrak{p}$ and $a\in A[\pi]$. It is in the proof of this proposition that we use the assumption \cref{Rampf} that $A[\pi]$ is ramified at each prime $\mathfrak{p}\in\Sigma_p$. The idea for the proposition came from the discussion of the case of elliptic curves over $\mathbf{Q}$ in \cite[\S 2]{Gr10}.
\begin{prop}
\label{max-unram}
If $\mathfrak{p}\in\Sigma_p$, then $A[\pi]/A_\mathfrak{p}[\pi]=A[\pi]_{I_\mathfrak{p}}$.
\end{prop}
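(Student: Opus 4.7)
The plan is to exhibit a natural surjection $A[\pi]_{I_\mathfrak{p}} \twoheadrightarrow A[\pi]/A_\mathfrak{p}[\pi]$ and then show it is injective by a dimension count, using \cref{Rampf} at the critical moment.

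First I would unpack the definition of $A_\mathfrak{p}$. Since $V/V_\mathfrak{p}$ is unramified as a $G_{F_\mathfrak{p}}$-module, the quotient $A/A_\mathfrak{p}$ is also unramified; in particular, $I_\mathfrak{p}$ acts trivially on the $\pi$-torsion $(A/A_\mathfrak{p})[\pi]$. Using that $A/A_\mathfrak{p}$ is $\mathscr{O}$-divisible, the snake lemma applied to the exact sequence $0 \to A_\mathfrak{p} \to A \to A/A_\mathfrak{p} \to 0$ identifies $(A/A_\mathfrak{p})[\pi]$ with $A[\pi]/A_\mathfrak{p}[\pi]$. Thus $A[\pi]/A_\mathfrak{p}[\pi]$ is a quotient of $A[\pi]$ on which $I_\mathfrak{p}$ acts trivially, so by the universal property of coinvariants there is a canonical surjection
\begin{equation*}
\varphi \colon A[\pi]_{I_\mathfrak{p}} \twoheadrightarrow A[\pi]/A_\mathfrak{p}[\pi].
\end{equation*}

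Next, write $A[\pi]_{I_\mathfrak{p}} = A[\pi]/W$, where $W$ is the $\mathbf{F}$-span of all elements $(g-1)a$ with $g \in I_\mathfrak{p}$ and $a \in A[\pi]$. The fact that $I_\mathfrak{p}$ acts trivially on $A[\pi]/A_\mathfrak{p}[\pi]$ already gives the inclusion $W \subseteq A_\mathfrak{p}[\pi]$, so $\ker\varphi = A_\mathfrak{p}[\pi]/W$. It therefore suffices to show $W = A_\mathfrak{p}[\pi]$, and since $A_\mathfrak{p}[\pi]$ is $1$-dimensional over $\mathbf{F}$, this reduces to showing $W \neq 0$.

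The key input here is \cref{Rampf}: $\overline{\rho}_f|_{G_F}$ is ramified at $\mathfrak{p}$, which is exactly the statement that $I_\mathfrak{p}$ does not act trivially on $A[\pi]$. Thus there exist $g \in I_\mathfrak{p}$ and $a \in A[\pi]$ with $(g-1)a \neq 0$, so $W \neq 0$. Combined with $W \subseteq A_\mathfrak{p}[\pi]$ and $\dim_{\mathbf{F}} A_\mathfrak{p}[\pi] = 1$, this forces $W = A_\mathfrak{p}[\pi]$, so $\varphi$ is an isomorphism, as desired. The only real subtlety is recognizing that \cref{Rampf} does not directly say $I_\mathfrak{p}$ acts nontrivially on the line $A_\mathfrak{p}[\pi]$ (which we do not need), but only that it acts nontrivially on $A[\pi]$; combined with triviality on the quotient, however, this is enough to force the image of $g-1$ to land in $A_\mathfrak{p}[\pi]$ and be nonzero there.
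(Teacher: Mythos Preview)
Your proof is correct and follows essentially the same approach as the paper's: exhibit the canonical surjection $A[\pi]_{I_\mathfrak{p}}\twoheadrightarrow A[\pi]/A_\mathfrak{p}[\pi]$ using that $A/A_\mathfrak{p}$ is unramified, then conclude it is an isomorphism by a dimension count invoking \cref{Rampf} (the paper phrases the count in terms of $\dim A[\pi]_{I_\mathfrak{p}}$ rather than your complementary $\dim W$, but this is the same argument). One small slip: the snake-lemma identification $A[\pi]/A_\mathfrak{p}[\pi]\cong (A/A_\mathfrak{p})[\pi]$ uses the $\pi$-divisibility of $A_\mathfrak{p}$ (to kill the connecting map into $A_\mathfrak{p}/\pi A_\mathfrak{p}$), not of $A/A_\mathfrak{p}$ as you wrote---though both are divisible here, so the conclusion is unaffected.
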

\begin{proof}
Since $A$ and $A_\mathfrak{p}$ are $\pi$-divisible, we have $A[\pi]/A_\mathfrak{p}[\pi]=(A/A_\mathfrak{p})[\pi]$. By the definition of $A_\mathfrak{p}$, this quotient is an unramified $\mathbf{F}[G_{F_\mathfrak{p}}]$-module, so we have a surjective homomorphism $A[\pi]_{I_\mathfrak{p}}\rightarrow A[\pi]/A_\mathfrak{p}[\pi]$. It follows that $A[\pi]_{I_\mathfrak{p}}$ is at least $1$-dimensional over $\mathbf{F}$ (since $A/A_\mathfrak{p}$ is $\mathscr{O}$-cofree of corank $1$). Because $A[\pi]$ is ramified at $\mathfrak{p}$ by assumption \cref{Rampf}, $A[\pi]_{I_\mathfrak{p}}$ cannot be $2$-dimensional. Thus the surjection $A[\pi]_{I_\mathfrak{p}}\rightarrow A[\pi]/A_\mathfrak{p}[\pi]$ is an equality.
\end{proof}
\begin{rem}
\label{intrinsic}
\Cref{max-unram} shows that the local conditions defining $\Sel^{\Sigma_0}(F_\infty,A[\pi])$ only depend on $A[\pi]$ as a $G_F$-module (when \emph{a priori} they depend on $A$ as a $G_F$-module because the definition of the subspace $A_\mathfrak{p}[\pi]\subseteq A[\pi]$ for a prime $\mathfrak{p}\mid p$ makes reference to the $G_F$-module structure of $A$). This is clear at the primes not dividing $p$, where the local condition is the unramified one, and \cref{max-unram} shows that at a prime $\mathfrak{P}$ lying above $\mathfrak{p}\in\Sigma_p$, the local condition is the kernel of the map 
\begin{equation*}
H^1(F_{\infty,\mathfrak{P}},A)\rightarrow H^1(I_\mathfrak{P},A[\pi]_{I_\mathfrak{p}})
\end{equation*}
induced by the quotient map $A[\pi]\rightarrow A[\pi]_{I_\mathfrak{p}}$ and restriction to $I_\mathfrak{P}$. The definition of the quotient $A[\pi]_{I_\mathfrak{p}}$ is given entirely in terms of the $G_F$-action on $A[\pi]$ (even just the $G_{F_\mathfrak{p}}$-action). This observation is crucial to our method because it shows that, if we have two modular forms satisfying the appropriate hypotheses whose residual representations are isomorphic as $G_F$-modules, then the corresponding residual Selmer groups are isomorphic.
\end{rem}
\indent The next proposition shows that, if $\Sigma_0$ contains the appropriate primes, then our non-primitive residual Selmer group coincides with the $\pi$-torsion in the non-primitive $p$-adic Selmer group.
\begin{prop}
\label{torsion-Selmer-isom}
If $\Sigma_0$ contains all the primes of $F$ dividing the tame level of $f$, then the natural map $H^1(F_\infty,A[\pi])\rightarrow H^1(F_\infty,A)$ induces an isomorphism of $\mathscr{O}$-modules 
\begin{equation*}
\Sel^{\Sigma_0}(F_\infty,A[\pi])\simeq\Sel^{\Sigma_0}(F_\infty,A)[\pi]\text{.}
\end{equation*}
\end{prop}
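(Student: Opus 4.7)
The plan is to first identify $H^1(F_\infty, A[\pi])$ with $H^1(F_\infty, A)[\pi]$ via the long exact cohomology sequence of $0\to A[\pi]\to A\xrightarrow{\pi}A\to 0$, and then verify prime-by-prime that this identification carries the local conditions defining $\Sel^{\Sigma_0}(F_\infty,A[\pi])$ onto those defining $\Sel^{\Sigma_0}(F_\infty,A)$.

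First I would establish the global isomorphism $H^1(F_\infty,A[\pi])\simeq H^1(F_\infty,A)[\pi]$, which is equivalent to showing $A(F_\infty)=0$. Hypothesis \cref{Irr} gives $A[\pi]^{G_F}=0$. The submodule $A[\pi]^{G_{F_\infty}}$ is $G_F$-stable (since $G_{F_\infty}$ is normal in $G_F$), hence $\Gamma$-stable; since $\Gamma$ is pro-$p$ and $A[\pi]^{G_{F_\infty}}$ is a finite $p$-group, any nonzero such submodule would have nonzero $\Gamma$-invariants, contradicting $A[\pi]^{G_F}=0$. Thus $A(F_\infty)[\pi]=0$, which since $A$ is $\pi$-power torsion forces $A(F_\infty)=0$, yielding the desired isomorphism from the Kummer sequence over $G_{F_\infty}$.

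Next I would match local conditions at each prime of $F_\infty$. At primes $\eta$ over some $v\in\Sigma_0$, no condition is imposed on either side. For $v\notin\Sigma_0$ with $v\nmid p$, the hypothesis on $\Sigma_0$ forces $v\nmid N$, so $A$ is unramified at $v$ and $I_\eta$ acts trivially on $A$. Then $A^{I_\eta}=A$ is $\pi$-divisible, so the kernel $A^{I_\eta}/\pi A^{I_\eta}$ of $H^1(I_\eta,A[\pi])\to H^1(I_\eta,A)$ is trivial; a diagram chase in
\[
\begin{array}{ccc}
H^1(F_{\infty,\eta},A[\pi]) & \longrightarrow & H^1(I_\eta,A[\pi]) \\
\downarrow & & \downarrow \\
H^1(F_{\infty,\eta},A) & \longrightarrow & H^1(I_\eta,A)
\end{array}
\]
then shows that a class in the top left is unramified if and only if its image in the bottom left is unramified. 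For a prime $\mathfrak{P}$ over some $\mathfrak{p}\in\Sigma_p$, the same argument applies verbatim after replacing $A$ and $A[\pi]$ by $A/A_\mathfrak{p}$ and $(A/A_\mathfrak{p})[\pi]=A[\pi]/A_\mathfrak{p}[\pi]$ (the final equality coming from the snake lemma applied to $0\to A_\mathfrak{p}\to A\to A/A_\mathfrak{p}\to 0$, using the $\pi$-divisibility of $A_\mathfrak{p}$), since $A/A_\mathfrak{p}$ is by construction unramified at $\mathfrak{p}$ (cf.\ \cref{adic-selmer}).

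The only mildly delicate point is the match at primes above $p$; it works for exactly the same reason as at primes away from $p$, namely that the unramified quotient $A/A_\mathfrak{p}$ governing the ordinary local condition is itself $\pi$-divisible, so inertia invariants commute cleanly with taking $\pi$-torsion. No hypotheses beyond those already in force are required.
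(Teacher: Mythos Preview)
Your proposal is correct and follows essentially the same argument as the paper: both establish the global identification $H^1(F_\infty,A[\pi])\simeq H^1(F_\infty,A)[\pi]$ via $H^0(F_\infty,A[\pi])=0$ (using \cref{Irr} and the pro-$p$ nature of $F_\infty/F$), and then verify the local conditions match by showing that the maps $H^1(I_\eta,A[\pi])\to H^1(I_\eta,A)$ and $H^1(I_\mathfrak{P},(A/A_\mathfrak{p})[\pi])\to H^1(I_\mathfrak{P},A/A_\mathfrak{p})$ are injective, using the $\pi$-divisibility of the unramified modules $A$ and $A/A_\mathfrak{p}$.
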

\begin{proof}
We have a commutative diagram
\begin{equation*}
\xymatrix{H^1(F_\infty,A) \ar[r] & \prod_{\eta\mid v\notin\Sigma_0,v\neq p}H^1(I_\eta,A)\times
\prod_{\mathfrak{P}\mid p} H^1(I_\mathfrak{P},A/A_\mathfrak{P})\\
H^1(F_\infty,A[\pi]) \ar[u] \ar[r] & \prod_{\eta\mid v\notin\Sigma_0,v\nmid p}H^1(I_\eta,A[\pi])
\times\prod_{\mathfrak{P}\mid p}H^1(I_\mathfrak{P},A[\pi]/A_\mathfrak{P}[\pi]) \ar[u]}
\end{equation*}
with the vertical maps coming from the respective inclusions of $\pi$-torsion $A[\pi]\hookrightarrow A$ and $A[\pi]/A_\mathfrak{p}[\pi]=(A/A_\mathfrak{p})[\pi]\hookrightarrow A/A_\mathfrak{p}$, and the horizontal maps coming from restriction. Note that the kernel of the top (respectively bottom) horizontal map is $\Sel^{\Sigma_0}(F_\infty,A)$ (respectively $\Sel^{\Sigma_0}(F_\infty,A[\pi])$). Since $A[\pi]$ is an irreducible $\mathbf{F}[G_F]$-module by assumption \cref{Irr}, $H^0(F,A[\pi])=0$, which implies that $H^0(F_\infty,A[\pi])=0$ as $F_\infty/F$ is a pro-$p$ extension. Thus the kernel of $H^1(F_\infty,A[\pi])\rightarrow H^1(F_\infty,A)$, which is a quotient of $H^0(F_\infty,A)$, is zero. So the left-hand vertical map is injective. Its image is $H^1(F_\infty,A)[\pi]$ by the long exact sequence in cohomology arising from multiplication by $\pi$ on $A$. The commutativity of the diagram shows that this vertical map takes $\Sel^{\Sigma_0}(F_\infty,A[\pi])$ into $\Sel^{\Sigma_0}(F_\infty,A)[\pi]$. To see that the image is precisely $\Sel^{\Sigma_0}(F_\infty,A)[\pi]$, it therefore suffices to prove that the right-hand vertical arrow is injective. We do this by considering each factor map on the right. First consider a prime $\eta$ of $F_\infty$ which divides $v\notin\Sigma_0$, $v\nmid p$. Since $\eta$ does not divide the level of $f$ (as $v\notin\Sigma_0$), $A$ is unramified at $\eta$, and the kernel of the map $H^1(I_\eta,A[\pi])\rightarrow H^1(I_\eta,A)$ is $A^{I_\eta}/\pi A^{I_\eta}=A/\pi A=0$ ($A$ is a divisible $\mathscr{O}$-module). Similarly, if $\mathfrak{P}$ is a prime of $F_\infty$ dividing $p$, then, since $A/A_\mathfrak{P}$ is unramified at $\mathfrak{P}$, the kernel of the map $H^1(I_\mathfrak{P},A[\pi]/A_\mathfrak{P}[\pi])\rightarrow H^1(I_\mathfrak{P},A/A_\mathfrak{P})$ is $(A/A_\mathfrak{P})/\pi(A/A_\mathfrak{P})=0$, because $A/A_\mathfrak{P}$ is a divisible $\mathscr{O}$-module.
\end{proof}
\indent Combining \cref{torsion-Selmer-isom} with \cref{intrinsic}, we conclude that for $\Sigma_0$ containing the primes dividing the tame level of $f$, the $\mathscr{O}$-module $\Sel^{\Sigma_0}(F_\infty,A)[\pi]$ only depends on $A[\pi]$ as an $\mathbf{F}[G_F]$-module.
\section{Global-to-Local Maps}
\label{global-to-local}
\indent In this section we establish the surjectivity of a global-to-local map of Galois cohomology (under appropriate hypotheses) which allows us to compare the Iwasawa invariants of the non-primitive and the primitive Selmer groups of $f$ (though we have not explicitly introduced Iwasawa invariants for non-primitive Selmer groups, they are defined as for any cofinitely generated discrete $\mathscr{O}$-torsion $\Lambda$-module according to \cref{iwasawa-inv-defn}).\\
\indent Recall that, by \cref{alt-local-cond}, we have an exact sequence of $\Lambda$-modules
\begin{equation}
\label{g-l}
0\rightarrow\Sel(F_\infty,A)\rightarrow H^1(F_\Sigma/F_\infty,A)\xrightarrow{\gamma}\prod_{v\in\Sigma-\Sigma_p}\mathcal{H}_v\times\prod_{\mathfrak{p}\in\Sigma_p}
\mathcal{H}_\mathfrak{p}\text{.}
\end{equation}
By \cite[Proposition 3]{GR89},
\begin{equation*}
\corank_\Lambda(H^1(F_\Sigma/F_\infty,A))\geq \sum_{v\text{ real}}d_v^-(V)+2r_2\text{,}
\end{equation*}
where the first sum is over the real primes of $F$, $d_v^-(V)$ is the dimension of the $-1$-eigenspace for a complex conjugation above $v$ acting on $V$ (the $K$-representation space of the Galois representation $\rho_f$), and $r_2$ is the number of complex primes of $F$. Because $\rho_f$ is odd, that is, the determinant of $\rho_f$ of any complex conjugation is $-1$, $d_v^-(V)=1$ for any any real prime $v$, as otherwise the determinant of $\rho_f$ of a complex conjugation would be $1$. So, letting $r_1$ denote the number of real primes of $F$, the inequality above becomes
\begin{equation*}
\corank_\Lambda(H^1(F_\Sigma/F_\infty,A))\geq r_1+2r_2=[F:\mathbf{Q}]\text{.}
\end{equation*}
\indent The $\Lambda$-coranks of the factors comprising the target of the map $\gamma$ of \cref{g-l} are determined by the corresponding primes as follows:
\begin{itemize}
\item if $v\notin\Sigma_p$ is finitely decomposed in $F_\infty$, then $\mathcal{H}_v$ is $\Lambda$-cotorsion;
\item if $v\notin\Sigma_p$ is split completely in $F_\infty$, then
\begin{equation*}
\corank_\Lambda(\mathcal{H}_v)=\rank_\mathscr{O}(H^0(F_v,A^*))\text{;}
\end{equation*}
\item if $\mathfrak{p}\in\Sigma_p$, then
\begin{equation*}
\corank_\Lambda(\mathcal{H}_\mathfrak{p})=[F_\mathfrak{p}:\mathbf{Q}_p]\text{.}
\end{equation*}
\end{itemize}
The first and third assertions are restatements of parts of \cref{local-non-split}, while the second is a restatement of part of \cref{split-local}. (Recall that the module $A^*$ appearing in the second assertion is the Cartier dual $\Hom_\mathscr{O}(A,(K/\mathscr{O})(1))$ of $A$.) It follows that the target of the map $\gamma$ has $\Lambda$-corank at least
\begin{equation*}
\sum_{\mathfrak{p}\in\Sigma_p}[F_\mathfrak{p}:\mathbf{Q}_p]=[F:\mathbf{Q}]\text{,}
\end{equation*}
and if $H^0(F_v,A^*)$ vanishes (equivalently, is finite) for each prime $v\in\Sigma$ that splits completely in $F_\infty$, this is exactly the $\Lambda$-corank of the target of $\gamma$.\\
%
\indent In proving the next theorem, many versions of which have appeared in the literature, we follow the proof of \cite[Proposition 1.8]{W}. In order for the technique of proof, which is originally due to Greenberg as far as we can tell, to succeed in our setting, we require the hypothesis alluded to above that $H^0(F_v,A^*)$ vanishes for each $v\in\Sigma$ which splits completely in $F_\infty$ (and this hypothesis accordingly is made in all the subsequent results which rely on \cref{sel-surj}). This hypothesis does not arise in \cite{W} because the $\mathbf{Z}_p$-extensions considered there are cyclotomic, so all finite primes are finitely decomposed. However, another version of this result is given as Proposition A.2 of \cite{PW11}, and in the generality of loc. cit., the proof sketched there (which is based on the same arguments as our proof below) requires a variant of this vanishing hypothesis in order to be complete. Indeed, in the setup of this result (as well as ours), if the relevant hypothesis fails to hold, the surjectivity of the map of Galois cohomology is impossible due to an inequality of $\Lambda$-coranks. Our proof of \cref{sel-surj} will make it apparent where the argument breaks down in the absence of this condition. With the condition though, the same argument applies to prove the more general result of \cite{PW11}. Regarding the hypothesis itself, it actually holds in the modular form setting except in a handful of certain special cases. Namely, let $\pi_f$ be the cuspidal automorphic representation associated to $f$ (to be precise, if $\pi_{u,f}$ is the unitary representation generated by the adelization of $f$, then $\pi_f=\pi\otimes\vert\det(\cdot)\vert^{-k/2}$). Then the local-global compatibility due to Deligne, Langlands, and Carayol (using the local Langlands correspondence normalized as in \cite[\S 0.5]{Car83}) shows that the hypothesis in question can fail for a prime $v$ lying over a rational prime $\ell\neq p$ only if the weight $k$ is $3$ and $\pi_{f,\ell}$ is principal series or if the weight $k$ is $4$ and $\pi_{f,\ell}$ is a twist of Steinberg. \\ 
\indent We will use the observation that irreducibility of the $\mathbf{F}[G_F]$-module $A[\pi]$ (which is our assumption \cref{Irr}) implies that
\begin{equation*}
H^0(F_\infty,A^*\otimes_\mathscr{O}K/\mathscr{O})=0\text{.}
\end{equation*}
Indeed, the $\pi$-torsion of $H^0(F,A^*\otimes_\mathscr{O}K/\mathscr{O})$ is the space of $G_F$-equivariant elements of $\Hom_\mathbf{F}(A[\pi],\mathbf{F}(1))$, which is zero since $A[\pi]$ and $\mathbf{F}(1)$ are irreducible of different $\mathbf{F}$-dimension. Thus there are no $G_{F_\infty}$-invariants either, since $F_\infty/F$ is a pro-$p$ extension.
\begin{thm}
\label{sel-surj}
Let $\Sigma$ be a finite set of primes of $F$ satisfying \emph{\cref{Suff}}. Assume that
\begin{enumerate}
\item the condition \emph{\cref{HA}} holds, and
\item for each prime $v\in\Sigma$ that splits completely in $F_\infty$, $H^0(F_v,A^*)=0$.
\end{enumerate}
Then the sequence
\begin{equation*}
0\rightarrow\Sel(F_\infty,A)\rightarrow H^1(F_\Sigma/F_\infty,A)\xrightarrow{\gamma}\prod_{v\in\Sigma-\Sigma_p}\mathcal{H}_v\times\prod_{\mathfrak{p}\in\Sigma_p}
\mathcal{H}_\mathfrak{p}\rightarrow 0
\end{equation*}
is exact.
\end{thm}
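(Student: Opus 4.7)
The left exactness of the sequence is already Proposition~\cref{alt-local-cond}, so the task is surjectivity of the global-to-local map $\gamma$. Following the Greenberg-style approach referenced by the author, I would run Cassels--Poitou--Tate at each finite layer $F_n$ and then pass to the limit. For the local conditions $L_v^{(n)}\subseteq H^1(F_{n,v},A)$ defining the Selmer group at level $n$ (unramified away from $p$, ordinary above $p$), let $L_v^{(n)\perp}\subseteq H^1(F_{n,v},A^*)$ be their Tate annihilators. The nine-term Poitou--Tate sequence yields
\[
H^1(F_\Sigma/F_n,A)\xrightarrow{\gamma_n}\bigoplus_{v\in\Sigma_n}H^1(F_{n,v},A)/L_v^{(n)}\to\Sel^\perp(F_n,A^*)^\vee\to H^2(F_\Sigma/F_n,A)\to 0,
\]
where $\Sel^\perp(F_n,A^*)\subseteq H^1(F_\Sigma/F_n,A^*)$ is the dual Selmer group cut out by the conditions $L_v^{(n)\perp}$. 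Taking $\varinjlim_n$---which is exact and which, prime by prime in $F$, converts $\bigoplus_{w\mid v}H^1(F_{n,w},A)/L_w^{(n)}$ into $\mathcal{H}_v$ via the identifications established in \cref{local-coh}---produces
\[
H^1(F_\Sigma/F_\infty,A)\xrightarrow{\gamma}\prod_{v\in\Sigma-\Sigma_p}\mathcal{H}_v\times\prod_{\mathfrak{p}\in\Sigma_p}\mathcal{H}_\mathfrak{p}\to\mathcal{X}^\vee\to H^2(F_\Sigma/F_\infty,A)\to 0,
\]
with $\mathcal{X}=\varprojlim_n\Sel^\perp(F_n,A^*)$ a compact $\Lambda$-module.

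The problem then reduces to showing the connecting map to $\mathcal{X}^\vee$ is zero, which follows if $\mathcal{X}^\vee$ itself vanishes. I would do this in two stages. First, a $\Lambda$-corank count: the cotorsion hypothesis \cref{HA} gives $\corank_\Lambda\Sel(F_\infty,A)=0$; the discussion preceding the theorem records $\corank_\Lambda H^1(F_\Sigma/F_\infty,A)\geq[F:\mathbf{Q}]$ via the global Euler characteristic formula and the oddness of $\rho_f$; and combining \cref{fd-structure} with \cref{split-local} shows the total $\Lambda$-corank of the target of $\gamma$ equals exactly $[F:\mathbf{Q}]$---this is precisely where the hypothesis $H^0(F_v,A^*)=0$ at completely split primes enters, since otherwise some $\mathcal{H}_v$ would contribute positive $\Lambda$-corank. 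Chasing the long exact sequence forces $\corank_\Lambda\mathcal{X}^\vee$ and $\corank_\Lambda H^2(F_\Sigma/F_\infty,A)$ both to be zero. Second, upgrade ``corank zero'' to ``zero'': the irreducibility hypothesis \cref{Irr} yields $H^0(F_\infty,A^*\otimes_\mathscr{O}K/\mathscr{O})=0$ (noted explicitly in the excerpt), and this, combined with standard no-pseudonull-submodule results for $\varprojlim H^1$ of $A^*$ (Jannsen), will kill the $\Lambda$-torsion of $\mathcal{X}$.

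The main obstacle is exactly this second stage: passing from a corank balance to actual vanishing of $\mathcal{X}$. Absent the hypothesis $H^0(F_v,A^*)=0$ at completely split primes, the first stage already breaks---the cokernel of $\gamma$ would have strictly positive $\Lambda$-corank, and surjectivity would be impossible. This is precisely the gap identified in the excerpt in \cite[Proposition A.2]{PW11}. With the hypothesis in force, the careful part of the implementation is to pin down $\mathcal{X}$ via global duality as the Pontryagin dual of an explicit quotient of $H^1(F_\Sigma/F_\infty,A^*)$ so that one can apply the residual irreducibility and the cotorsion of $\Sel(F_\infty,A)$ to conclude $\mathcal{X}[\pi]=0$, hence $\mathcal{X}=0$ by Nakayama. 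Once that is in hand, the surjectivity of $\gamma$ is forced.
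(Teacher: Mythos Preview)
Your overall architecture---Poitou--Tate at the finite layers and pass to the limit---matches the paper's, but your second stage contains a genuine error, and the paper handles that stage by a device you have omitted.

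The error: from $\mathcal{X}[\pi]=0$ you cannot conclude $\mathcal{X}=0$ by Nakayama. The condition $\mathcal{X}[\pi]=0$ says only that $\mathcal{X}$ is $\mathscr{O}$-torsion-free; Nakayama would require $\mathcal{X}/\mathfrak{m}\mathcal{X}=0$, or at least $\mathcal{X}/\pi\mathcal{X}=0$. A $\Lambda$-torsion, $\mathscr{O}$-torsion-free module such as $\Lambda/(\gamma-1)\simeq\mathscr{O}$ satisfies both of your conclusions from stages one and two yet is nonzero. Likewise, the Jannsen-type statement that $\varprojlim_n H^1(F_\Sigma/F_n,A^*)$ has no nonzero pseudonull (i.e.\ finite) $\Lambda$-submodules does not exclude torsion submodules of this shape. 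What would actually suffice is the stronger assertion that $\varprojlim_n H^1(F_\Sigma/F_n,A^*)$ is $\Lambda$-torsion-free, so that its $\Lambda$-torsion submodule $\mathcal{X}$ must vanish; but that is essentially a weak-Leopoldt statement for the dual representation and is not something you have extracted from \cref{Irr} and \cref{HA} by the route you describe.

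The paper sidesteps this entirely by \emph{twisting}. One chooses $t\in\mathbf{Z}$ so that, for $A_t=A\otimes\mathscr{O}(\kappa^t)$, the finite-level Selmer groups $\Sel(F_n,A_t)$ and the local invariant modules $H^0(F_{n,w},A_t^*)$, $H^0(F_{n,\mathfrak{p}},A_t/A_{t,\mathfrak{p}})$, $H^0(F_{n,\mathfrak{p}},(A_t/A_{t,\mathfrak{p}})^*)$, $H^0(F_{n,\mathfrak{p}},(A_{t,\mathfrak{p}})^*)$ are all finite for every $n$. This is possible for all but finitely many $t$ by \cref{finite-inv}, and it is precisely at the completely split primes $v$---where the decomposition group in $\Gamma$ is trivial, so twisting cannot change $H^0(F_v,A^*)$---that hypothesis~(ii) of the theorem is invoked. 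With these finiteness conditions, the local and global Euler characteristic formulas at level $F_n$ force $\coker(\gamma_{n,t})$ to be finite, and Poitou--Tate bounds it by the $\mathscr{O}$-torsion of $H^1(F_\Sigma/F_n,A_t^*)$, which vanishes since $H^0(F_\infty,A^*\otimes K/\mathscr{O})=0$. Hence $\coker(\gamma_{n,t})=0$ for all $n$, and passing to the limit gives $\coker(\gamma)\simeq\coker(\gamma_t)=0$. No structural analysis of the limit module $\mathcal{X}$ is required.
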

\begin{proof}
We need to prove that $\gamma$ is surjective, i.e. that $\coker(\gamma)=0$. 
We will do this by considering similar global-to-local maps at the finite levels $F_n$ of $F_\infty$ and passing to the limit. To ensure that the kernels and cokernels of the finite level global-to-local maps are finite and trivial, respectively, we will twist the Galois module structures under consideration by a character. Let $\kappa:\Gamma\simeq 1+p\mathbf{Z}_p$ be an isomorphism of topological groups, which we also regard as a character of $G_F$ and of $\Gal(F_\Sigma/F)$. If $S$ is a discrete torsion $\mathscr{O}$-module with a continuous (i.e. smooth) $\mathscr{O}$-linear $\Gal(F_\Sigma/F)$-action, then $S_t$ will denote $S\otimes_\mathscr{O}\mathscr{O}(\kappa^t)$ for $t\in\mathbf{Z}$, where $\mathscr{O}(\kappa^t)$ is a free rank-one $\mathscr{O}$-module with $\Gal(F_\Sigma/F)$-action via $\kappa^t$ (the latter notation is consistent with the notation for twisting of $\Lambda$-modules in \cref{app-b}). This is also a discrete torsion $\mathscr{O}$-module with a continuous $\mathscr{O}$-linear $\Gal(F_\Sigma/F)$-action, and if $S$ is a discrete $\mathscr{O}$-torsion $\Lambda$-module, $S_t$ is as well (with $\Lambda$-module structure induced by the diagonal $\Gamma$-action). We have $S\simeq S_t$ as $\mathscr{O}[\Gal(F_\Sigma/F_\infty)]$-modules, and if $S$ is a discrete $\mathscr{O}$-torsion $\Lambda$-module, then $S_t$ is isomorphic to the Pontryagin dual of $\widehat{S}(\kappa^{-t})$ (see the beginning of \cref{app-a} for generalities on Pontryagin duality and the paragraph above \cref{char-poly-defn} for the definition of the twist of a $\Lambda$-module by a character such as $\kappa^t$).\\
\indent For each $t\in\mathbf{Z}$, we define a Selmer group $\Sel(F_\infty,A_t)$ for $A_t$ as the kernel of the map
\begin{equation*}
H^1(F_\Sigma/F_\infty,A_t)\xrightarrow{\gamma_t}\prod_{v\in\Sigma-\Sigma_p}\mathcal{H}_v(F_\infty,A_t)\times\prod_{\mathfrak{p}\in\Sigma_p}\mathcal{H}_\mathfrak{p}
(F_\infty,A_t)\text{,}
\end{equation*}
where the factors of the target are defined just as for $A$, setting $A_{t,\mathfrak{p}}=(A_\mathfrak{p})_t$ for $\mathfrak{p}\in\Sigma_p$ (see the beginning of \cref{local-coh}). With this definition, $\Sel(F_\infty,A_t)\simeq\Sel(F_\infty,A)_t$ as $\Lambda$-modules, and similarly for the local cohomology modules. Because $A\simeq A_t$ as $\Gal(F_\Sigma/F_\infty)$-modules, we have $\coker(\gamma)\simeq\coker(\gamma_t)$ as $\mathscr{O}$-modules. It therefore suffices to prove that $\coker(\gamma_t)$ vanishes for some $t\in\mathbf{Z}$.\\
\indent We will prove vanishing of some $\coker(\gamma_t)$ by working with Selmer groups over $F_n$ for $n\geq 0$ and taking a limit. For $t\in\mathbf{Z}$, $n\geq 0$, and a prime $\mathfrak{p}$ of $F_n$ dividing $\mathfrak{p}_0\in\Sigma_p$, we set
\begin{equation*}
H^1_{\ord}(F_{n,\mathfrak{p}},A_t)=\ker(H^1(F_{n,\mathfrak{p}},A_t)\rightarrow H^1(F_{n,\mathfrak{p}},A_t/A_{t,\mathfrak{p}}))\text{,}
\end{equation*}
where $A_{t,\mathfrak{p}}=A_{t,\mathfrak{p}_0}$. Note that this is stronger than the analogous ordinary local condition over $F_\infty$ as we are using decomposition groups instead of inertia groups. But because we are using decomposition groups, we can apply Poitou-Tate global duality to each finite level Selmer group $\Sel(F_n,A_t)$, defined as the kernel of the map
\begin{equation*}
H^1(F_\Sigma/F_n,A_t)\xrightarrow{\gamma_{n,t}}\prod_{w\mid v\in\Sigma-\Sigma_p}\dfrac{H^1(F_{n,w},A_t)}{H^1_{\ur}(F_{n,w},A_t)}\times\prod_{\mathfrak{p}\mid p}
\dfrac{H^1(F_{n,\mathfrak{p}},A_t)}{H^1_{\ord}(F_{n,\mathfrak{p}},A_t)}\text{,}
\end{equation*}
where the products run over primes of $F_n$. Upon taking the direct limit of the maps $\gamma_{n,t}$ over $n\geq 0$, we get maps $\gamma_{\infty,t}$ with source $H^1(F_\Sigma/F_\infty,A_t)$, such that $\coker(\gamma_t)$ is a $\Lambda$-module quotient of $\coker(\gamma_{\infty,t})$ (it is a quotient because we used decomposition groups to define the local conditions at the primes dividing $p$ for the finite level Selmer groups). We will prove that for an appropriate choice of $t$, the $\mathscr{O}$-modules $\coker(\gamma_{n,t})$ are trivial for all $n\geq 0$. The desired result will follow from this.\\
\indent We will impose several conditions on the integers $t$ under consideration. Because $\Sel(F_\infty,A)$ is $\Lambda$-cotorsion by hypothesis, \cref{finite-inv} implies that for all but finitely many $t$ the $\mathscr{O}$-modules $\Sel(F_\infty,A_t)^{\Gamma_n}\simeq \Sel(F_\infty,A)_t^{\Gamma_n}$ will be finite for all $n\geq 0$. Similarly, because $H^0(F_{\infty},A)$ is $\Lambda$-cotorsion and $H^0(F_\infty,A)_t=H^0(F_\infty,A_t)$, for all but finitely many $t$, the $\mathscr{O}$-modules $H^0(F_n,A_t)=H^0(F_\infty,A_t)^{\Gamma_n}$ will be finite for all $n\geq 0$. We assume from now on that $t$ satisfies these conditions, which imply the following one:
\begin{enumerate}
\item $\ker(\gamma_{n,t})=\Sel(F_n,A_t)$ is finite for all $n\geq 0$\text{.}
\end{enumerate}
To see this, observe that the restriction map $H^1(F_n,A_t)\rightarrow H^1(F_\infty,A_t)$ takes $\ker(\gamma_{n,t})$ into $\Sel(F_\infty,A_t)^{\Gamma_n}$, which we have assumed is finite. The kernel of the restriction map is $H^1(F_\infty/F_n,H^0(F_\infty,A_t))$, which has the same $\mathscr{O}$-corank as $H^0(F_n,A_t)$, also assumed finite. So, indeed, $\ker(\gamma_{n,t})$ is finite for all $n\geq 0$. We now wish to impose three additional conditions on $t$:
\begin{enumerate}
\item[(ii)] for $n\geq 0$ and $w\in\Sigma_{n,v}$ with $v\nmid p$, $H^0(F_{n,w},A_t^*)$ is finite,
\item[(iii)] for $n\geq 0$ and $\mathfrak{p}\mid \mathfrak{p}_0\in\Sigma_p$, $H^0(F_{n,\mathfrak{p}},A_t/A_{t,\mathfrak{p}})$ and $H^0(F_{n,\mathfrak{p}},(A_t/A_{t,\mathfrak{p}})^*)$ are finite, and
\item[(iv)] for $n\geq 0$ and $\mathfrak{p}\mid \mathfrak{p}_0\in\Sigma_p$, $H^0(F_{n,\mathfrak{p}},(A_{t,\mathfrak{p}})^*)$ is finite.
\end{enumerate}
All three of these conditions will hold for all but finitely many $t$. For (iii) and (iv), this follows from \cref{finite-inv} applied to the Iwasawa algebra of the image of $G_{F_{\mathfrak{p}_0}}$ in $\Gamma$, which is non-trivial as we have assumed that no prime dividing $p$ splits completely in $F_\infty$ (we are using that the modules of coinvariants and invariants of a $\Lambda$-module that is finitely generated over $\mathscr{O}$ have the same $\mathscr{O}$-rank). Note that the conditions involving finiteness of the local invariants at each level of the Cartier dual modules are equivalent to the vanishing of the local invariants, since the Cartier dual modules are finite free over $\mathscr{O}$. That condition (ii) holds for all but finitely many $t$ follows from \cref{finite-inv} for as before, \emph{except} when $v$ is a prime that splits completely in $F_\infty$, in which case $H^0(F_{n,w},A_t^*)$ can be identified with $H^0(F_v,A^*)$, which vanishes by hypothesis (we cannot argue that $H^0(F_v,A^*)$ has to vanish for such $v$ as before because the image of $G_{F_v}$ in $\Gamma$ is trivial).\\
\indent For $n\geq 0$ and $w\mid v\in\Sigma$ a prime of $F_n$, $v\notin\Sigma_p$, Tate local duality and the local Euler characteristic formula give
\begin{equation}
\label{r1}
\corank_\mathscr{O}\bigg(\dfrac{H^1(F_{n,w},A_t)}{H^1_{\ur}(F_{n,w},A_t)}\bigg)
=\rank_\mathscr{O}(H^0(F_{n,w},A_t^*))=0\text{,}
\end{equation}
the last equality coming from condition (ii).
Similarly, (iii) implies that $H^1(F_{n,\mathfrak{p}},A_t/A_{t,\mathfrak{p}})$ has $\mathscr{O}$-corank equal to $[F_{n,\mathfrak{p}}:\mathbf{Q}_p]$ for $n\geq 0$ and $\mathfrak{p}\mid p$ a prime of $F_n$. Condition (iv) then implies that
\begin{equation}
\label{r2}
\corank_\mathscr{O}\bigg(\dfrac{H^1(F_{n,\mathfrak{p}},A_t)}{H^1_{\ord}(F_{n,\mathfrak{p}},A_t)}\bigg)=[F_{n,\mathfrak{p}}:\mathbf{Q}_p]
\end{equation}
for all $n\geq 0$ and all such $\mathfrak{p}$ as well.\\
\indent A modification of the Poitou-Tate exact sequence gives the exact sequence
\begin{equation*}
\xymatrix{0\ar[r]&\Sel(F_n,A_t)\ar[r]&H^1(F_\Sigma/F_n,A_t)\ar@/_/[dll]_{\gamma_{n,t}}&\\
\prod_{w\mid v\in\Sigma-\Sigma_p}\dfrac{H^1(F_{n,w},A_t)}{H^1_{\ur}(F_{n,w},A_t)}\times\prod_{\mathfrak{p}\mid p}\dfrac{H^1(F_{n,\mathfrak{p}},A_t)}{
H^1_{\ord}(F_{n,\mathfrak{p}},A_t)}\ar[r] &H_{1,n} \ar[r] &H_{2,n}\ar[r] &0}
\end{equation*}
where $H_{1,n}$ is dual to a submodule of $H^1(F_\Sigma/F_n,A_t^*)$ and $H_{2,n}$ is a submodule of $H^2(F_\Sigma/F_n,A_t)$. The global Euler characteristic formula shows that
\begin{equation}
\label{gec}
\corank_\mathscr{O}(H^1(F_\Sigma/F_n,A_t))=p^n[F:\mathbf{Q}]+\corank_\mathscr{O}(H^2(F_\Sigma/F_n,A_t))\text{.}
\end{equation}
Equations \cref{r1} and \cref{r2} give, for all $n\geq 0$,
\begin{align*}
\corank_\mathscr{O}\Bigg(\prod_{w\mid v\in\Sigma-\Sigma_p}\dfrac{H^1(F_{n,w},A_t)}{H^1_{\ur}(F_{n,w},A_t)}
\times\prod_{\mathfrak{p}\mid p}\dfrac{H^1(F_{n,\mathfrak{p}},A_t)}{H^1_{\ord}(F_{n,\mathfrak{p}},A_t)}\Bigg)&=\corank_\mathscr{O}\Bigg(\prod_{\mathfrak{p}\mid p}\dfrac{H^1(F_{n,\mathfrak{p}},A_t)}{H^1_{\ord}(F_{n,\mathfrak{p}},A_t)}\Bigg)\\
&=\sum_{\mathfrak{p}\mid p}[F_{n,\mathfrak{p}}:\mathbf{Q}_p]\\
&=\sum_{\mathfrak{p}_0\in\Sigma_p}\sum_{\mathfrak{p}\mid \mathfrak{p}_0}[F_{n,\mathfrak{p}}:F_{\mathfrak{p}_0}][F_{\mathfrak{p}_0}:\mathbf{Q}_p]\\
&=\sum_{\mathfrak{p}_0\in\Sigma_p}[F_{\mathfrak{p}_0}:\mathbf{Q}_p]\Bigg(\sum_{\mathfrak{p}\mid\mathfrak{p}_0}[F_{n,\mathfrak{p}}:F_{\mathfrak{p}_0}]\Bigg)\\
&=\sum_{\mathfrak{p}_0\in\Sigma_p}[F_{\mathfrak{p}_0}:\mathbf{Q}_p]p^n=p^n[F:\mathbf{Q}]\text{.}
\end{align*}
Therefore, since $\Sel(F_n,A_t)$ is finite for all $n\geq 0$ by (i), the exact sequence above combined with \cref{gec} gives that
\begin{equation*}
\corank_\mathscr{O}(H^1(F_\Sigma/F_n,A_t))=p^n[F:\mathbf{Q}]
\end{equation*}
and
\begin{equation*}
\corank_\mathscr{O}(H^2(F_\Sigma/F_n,A_t))=0\text{.}
\end{equation*}
In particular, $\coker(\gamma_{n,t})$ and $H_{2,n}$ are finite, and hence so is $H_{1,n}$. Moreover, the order of $\coker(\gamma_{n,t})$ is bounded above by that of $H_{1,n}$, which, being finite, is dual to a submodule of $H^1(F_\Sigma/F_n,A_t^*)[\pi^\infty]$. The $\mathscr{O}$-torsion submodule of $H^1(F_\Sigma/F_n,A_t^*)$ is a quotient of the module $H^0(F_n,A_t^*\otimes K/\mathscr{O})$, which in turn is a submodule of
\begin{equation*}
H^0(F_\infty,A_t^*\otimes_\mathscr{O}K/\mathscr{O})=H^0(F_\infty,A^*\otimes_\mathscr{O}K/\mathscr{O})\text{.}
\end{equation*}
But the latter group is trivial by the remarks preceding the theorem. Thus $\coker(\gamma_{n,t})=0$ for all $n\geq 0$.
\end{proof}
\begin{cor}
\label{sel-surj-2}
Let $\Sigma$ be a finite set of primes of $F$ satisfying \emph{\cref{Suff}}. Assume that
\begin{enumerate}
\item the condition \emph{\cref{HA}} holds, and
\item for each prime $v\in\Sigma$ that splits completely in $F_\infty$, $H^0(F_v,A^*)=0$.
\end{enumerate}
If $\Sigma_0$ is a subset of $\Sigma$ not containing any Archimedean primes or any primes above $p$, then $\Sel^{\Sigma_0}(F_\infty,A)$ is $\Lambda$-cotorsion and the sequences
\begin{equation*}
0\rightarrow\Sel^{\Sigma_0}(F_\infty,A)\rightarrow H^1(F_\Sigma/F_\infty,A)\rightarrow\prod_{v\in\Sigma-\Sigma_0-\Sigma_p}\mathcal{H}_v\times\prod_{\mathfrak{p}\in\Sigma_p}\mathcal{H}_\mathfrak{p}
\rightarrow 0
\end{equation*}
and
\begin{equation*}
0\rightarrow\Sel(F_\infty,A)\rightarrow\Sel^{\Sigma_0}(F_\infty,A)\rightarrow\prod_{v\in\Sigma_0}\mathcal{H}_v\rightarrow 0
\end{equation*}
are exact.
\end{cor}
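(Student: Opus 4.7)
My plan is to deduce \cref{sel-surj-2} from \cref{sel-surj} by a pair of essentially formal manipulations, followed by a short corank argument for the cotorsion statement.

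First, for the first exact sequence, I would observe that, in view of the proof of \cref{alt-local-cond} applied with the local conditions at primes in $\Sigma_0$ omitted, $\Sel^{\Sigma_0}(F_\infty,A)$ is equal to the kernel of the composition of the map $\gamma$ of \cref{sel-surj} with the natural projection
\begin{equation*}
\prod_{v\in\Sigma-\Sigma_p}\mathcal{H}_v\times\prod_{\mathfrak{p}\in\Sigma_p}\mathcal{H}_\mathfrak{p}\twoheadrightarrow\prod_{v\in\Sigma-\Sigma_0-\Sigma_p}\mathcal{H}_v\times\prod_{\mathfrak{p}\in\Sigma_p}\mathcal{H}_\mathfrak{p}.
\end{equation*}
Since $\gamma$ is surjective by \cref{sel-surj}, this composition is surjective too (a surjection followed by a projection onto a direct factor remains surjective), yielding the first exact sequence.

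Next, for the second exact sequence, I would apply the snake lemma to the commutative diagram whose top row is the sequence of \cref{sel-surj} and whose bottom row is the sequence just established, connected by the inclusion $\Sel(F_\infty,A)\hookrightarrow\Sel^{\Sigma_0}(F_\infty,A)$ on the left, the identity on $H^1(F_\Sigma/F_\infty,A)$ in the middle, and the projection on the right. The kernel of the right-hand projection is $\prod_{v\in\Sigma_0}\mathcal{H}_v$ and its cokernel is zero, while the identity has trivial kernel and cokernel, so the snake sequence collapses to a $\Lambda$-module isomorphism
\begin{equation*}
\prod_{v\in\Sigma_0}\mathcal{H}_v\simeq\Sel^{\Sigma_0}(F_\infty,A)/\Sel(F_\infty,A),
\end{equation*}
which gives the second exact sequence.

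To verify the cotorsion assertion, I would read off from the second exact sequence that it suffices to show $\Sel(F_\infty,A)$ and each $\mathcal{H}_v$ for $v\in\Sigma_0$ are $\Lambda$-cotorsion. The former is hypothesis \cref{HA}. For the latter, if $v$ is finitely decomposed in $F_\infty$ then $\mathcal{H}_v$ is cotorsion by \cref{fd-structure}; if $v$ splits completely in $F_\infty$ then \cref{split-local} gives $\corank_\Lambda(\mathcal{H}_v)=\rank_\mathscr{O}(H^0(F_v,A^*))$, which is zero by hypothesis (2). Thus $\prod_{v\in\Sigma_0}\mathcal{H}_v$ is cotorsion, completing the argument. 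Since all the genuine work is packaged in \cref{sel-surj} and the local-cohomology analysis of \cref{local-coh}, the proof of this corollary is essentially bookkeeping; the only point of any subtlety is that hypothesis (2), together with \cref{split-local}, is exactly what prevents completely split primes in $\Sigma_0$ from contributing positive $\Lambda$-corank to the quotient $\Sel^{\Sigma_0}/\Sel$.
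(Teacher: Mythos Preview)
Your proposal is correct and follows essentially the same approach as the paper. The paper's argument for the second sequence is marginally more direct---it simply observes that $\Sel^{\Sigma_0}(F_\infty,A)$ is the inverse image under $\gamma$ of the factor $\prod_{v\in\Sigma_0}\mathcal{H}_v$, so surjectivity of $\gamma$ immediately yields surjectivity onto that factor---whereas you package the same observation via the snake lemma; both are equivalent bookkeeping, and your cotorsion argument matches the paper's exactly.
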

\begin{proof}
The first sequence is exact by the definition of the non-primitive Selmer group together with \cref{sel-surj}. The second sequence is also exact by definition except for the surjectivity of the final map. The hypotheses together with \cref{local-non-split,split-local} imply that the target of that map is $\Lambda$-cotorsion. Thus the fact that $\Sel(F_\infty,A)$ is cotorsion implies the same for $\Sel^{\Sigma_0}(F_\infty,A)$. Finally, since $\Sel^{\Sigma_0}(F_\infty,A)$ is exactly the inverse image of $\prod_{v\in\Sigma_0}\mathcal{H}_v$ in $H^1(F_\Sigma/F_\infty,A)$, the second sequence is exact on the right as well.
\end{proof}
\begin{rem}
The most interesting case of the preceding corollary is when $\Sigma$ consists of $\Sigma_0$ together with the Archimedean primes and the primes above $p$.
\end{rem}
\section{Divisibility of the non-primitive Selmer group}
\label{nice-selmer}
\indent The main result in this section is that, under the hypotheses of \cref{sel-surj}, the $\Sigma_0$-non-primitive Selmer group for $\Sigma_0$ containing the primes dividing the tame level of $f$ has no proper $\Lambda$-submodules of finite index. First we deduce the corresponding result for $H^1(F_\Sigma/F_\infty,A)$, where $\Sigma$ satisfies \cref{Suff}.
\begin{prop}
\label{no-subs-global}
Let $\Sigma$ be a finite set of primes of $F$ satisfying \emph{\cref{Suff}}. Assume that
\begin{enumerate}
\item the condition \emph{\cref{HA}} holds, and
\item for each prime $v\in\Sigma$ that splits completely in $F_\infty$, $H^0(F_v,A^*)=0$.
\end{enumerate}
Then the $\Lambda$-corank of $H^1(F_\Sigma/F_\infty,A)$ is $[F:\mathbf{Q}]$, and $H^1(F_\Sigma/F_\infty,A)$ has no proper $\Lambda$-submodules of finite index.
\end{prop}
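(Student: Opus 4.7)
The $\Lambda$-corank assertion follows immediately from \cref{sel-surj} together with the local structure results. By hypothesis (i), $\Sel(F_\infty,A)$ is $\Lambda$-cotorsion; by \cref{fd-structure}, each $\mathcal{H}_v$ with $v\notin\Sigma_p$ finitely decomposed in $F_\infty$ is $\Lambda$-cotorsion; and by \cref{split-local} combined with hypothesis (ii), each $\mathcal{H}_v$ with $v\in\Sigma-\Sigma_p$ splitting completely in $F_\infty$ is likewise $\Lambda$-cotorsion (since $\rank_\mathscr{O}H^0(F_v,A^*)=0$). Finally, each $\mathcal{H}_\mathfrak{p}$ with $\mathfrak{p}\in\Sigma_p$ contributes $\Lambda$-corank $[F_\mathfrak{p}:\mathbf{Q}_p]$ by \cref{fd-structure}. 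Applying additivity of the $\Lambda$-corank to the exact sequence of \cref{sel-surj} then yields $\corank_\Lambda H^1(F_\Sigma/F_\infty,A)=\sum_{\mathfrak{p}\in\Sigma_p}[F_\mathfrak{p}:\mathbf{Q}_p]=[F:\mathbf{Q}]$.

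For the claim about finite-index submodules, set $M:=H^1(F_\Sigma/F_\infty,A)$ and $X:=\widehat{M}$. By Pontryagin duality, $M$ has no proper $\Lambda$-submodule of finite index if and only if $X$ has no non-zero finite $\Lambda$-submodule. Since $\Lambda$ is a complete local Noetherian ring with maximal ideal $\mathfrak{m}=(\pi,\gamma-1)$, this last condition is equivalent to $X[\mathfrak{m}]=0$, or dually (via the standard identification $X[\mathfrak{m}]\simeq\Hom_\Lambda(M/\mathfrak{m}M,\mathbf{F})^{\widehat{}}$) to the Nakayama-style statement $\mathfrak{m}M=M$. The plan is to establish the stronger assertion that $M$ is $\pi$-divisible, from which $\mathfrak{m}M\supseteq\pi M=M$ gives the desired conclusion.

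The long exact cohomology sequence associated to $0\to A[\pi]\to A\xrightarrow{\pi}A\to 0$ over $F_\Sigma/F_\infty$ provides an injection $M/\pi M\hookrightarrow H^2(F_\Sigma/F_\infty,A[\pi])$, so the problem reduces to the residual weak-Leopoldt-type vanishing $H^2(F_\Sigma/F_\infty,A[\pi])=0$. My plan is to derive this by applying Poitou-Tate global duality at each finite layer $F_n$, identifying $H^2(F_\Sigma/F_n,A_t)^\vee$ with a localization-kernel (Shafarevich--Tate-style) group for the Cartier dual, and then invoking the $\mathscr{O}$-corank computation carried out in the proof of \cref{sel-surj} (which shows $H^2(F_\Sigma/F_n,A_t)$ is finite for every $n\geq 0$ and suitable twist $t$) together with the vanishing $H^0(F_\infty,A^*\otimes_\mathscr{O}K/\mathscr{O})=0$ recorded just before \cref{sel-surj} (a consequence of \cref{Irr}) in order to pass to the limit and eliminate the $\mathscr{O}$-torsion contributions. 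The main obstacle is precisely this $H^2$-vanishing in a general $\mathbf{Z}_p$-extension: the duality and limit argument must accommodate the larger local cohomology at completely-split primes described in \cref{split-local}, which is where the machinery assembled in \cref{local-coh} and the completely-split hypothesis $H^0(F_v,A^*)=0$ from (ii) become indispensable.
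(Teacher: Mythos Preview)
Your corank computation is correct and is exactly the paper's argument.

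For the second assertion, however, there is a genuine gap: you aim to prove that $M=H^1(F_\Sigma/F_\infty,A)$ is $\pi$-divisible (equivalently that $H^2(F_\Sigma/F_\infty,A[\pi])=0$), but this is strictly stronger than what is being claimed and can \emph{fail} under the stated hypotheses. Indeed, dualizing the exact sequence of \cref{sel-surj} embeds $\prod_{v\in\Sigma}\widehat{\mathcal{H}_v}$ into $X=\widehat{M}$. Now take $v\in\Sigma$ splitting completely in $F_\infty$: hypothesis (ii) forces $\corank_\Lambda(\mathcal{H}_v)=0$, but \cref{split-local} gives $\mathcal{H}_v\simeq\bigoplus_i\widehat{\Lambda/\pi^{m_i}\Lambda}$, which is non-zero whenever $H^1(F_v,A)/H^1_{\ur}(F_v,A)$ has a non-trivial finite part (this certainly occurs for primes dividing the tame level; cf.\ the discussion after \cref{main}). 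Then $X$ contains a copy of $\Lambda/\pi^{m}$, so $X[\pi]\neq 0$ and $M$ is \emph{not} $\pi$-divisible; correspondingly $H^2(F_\Sigma/F_\infty,A[\pi])\neq 0$, and your reduction cannot succeed. Note that $\Lambda/\pi^{m}\simeq(\mathscr{O}/\pi^m)[[T]]$ has no non-zero finite $\Lambda$-submodules (multiplication by $T$ is injective), so this example is perfectly consistent with the proposition---it just shows that ``no proper finite-index submodules'' does not imply $\pi$-divisibility.

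The paper avoids this by quoting Greenberg directly. Once the corank equality is in hand, \cite[Proposition~3]{GR89} forces $H^2(F_\Sigma/F_\infty,A)$ to be $\Lambda$-cotorsion, while \cite[Proposition~4]{GR89} says it is $\Lambda$-cofree; hence $H^2(F_\Sigma/F_\infty,A)=0$. Then \cite[Proposition~5]{GR89} gives the absence of proper finite-index $\Lambda$-submodules of $H^1(F_\Sigma/F_\infty,A)$ as a direct consequence of this $H^2$-vanishing for $A$ itself, without any need for the residual vanishing you are after.
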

\begin{proof}
By \cref{sel-surj} and the hypothesis that $\Sel(F_\infty,A)$ is cotorsion, we have
\begin{equation*}
\corank_\Lambda(H^1(F_\Sigma/F_\infty,A))=\corank_\Lambda\Bigg(
\prod_{v\in\Sigma-\Sigma_p}\mathcal{H}_v\times\prod_{\mathfrak{p}\in\Sigma_p}\mathcal{H}_\mathfrak{p}\Bigg)\text{.}
\end{equation*}
The discussion at the beginning of \cref{global-to-local} shows that, in the presence of our hypothesis (ii),
\begin{align*}
\corank_\Lambda\Bigg(
\prod_{v\in\Sigma-\Sigma_p}\mathcal{H}_v\times\prod_{\mathfrak{p}\in\Sigma_p}\mathcal{H}_\mathfrak{p}\Bigg)&=
\corank_\Lambda\Bigg(\prod_{\mathfrak{p}\in\Sigma_p}\mathcal{H}_\mathfrak{p}\Bigg)\\
&=\sum_{\mathfrak{p}\in\Sigma_p}[F_\mathfrak{p}:\mathbf{Q}_p]=[F:\mathbf{Q}]\text{.}
\end{align*}
This proves the first assertion. Now we invoke \cite[Propositions 3-5]{GR89}. Proposition 3 implies that $H^2(F_\Sigma/F_\infty,A)$ is $\Lambda$-cotorsion, while Proposition 4 implies that $H^2(F_\Sigma/F_\infty,A)$ is $\Lambda$-cofree. Thus $H^2(F_\Sigma/F_\infty,A)=0$, and now Proposition 5 implies that $H^1(F_\Sigma/F_\infty,A)$ has no proper $\Lambda$-submodules of finite index.
\end{proof}
\indent The next lemma will allow us to deduce the desired property of the non-primitive Selmer group from the corresponding property of $H^1(F_\Sigma/F_\infty,A)$.
\begin{lem}
\label{no-subs-quot}
Let $Y$ be a finitely generated $\Lambda$-module, $Z$ a free $\Lambda$-submodule. If $Y$ contains no non-zero, finite $\Lambda$-submodules, then the same is true for $Y/Z$.
\end{lem}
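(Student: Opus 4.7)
The plan is to assume for contradiction that $F\subseteq Y/Z$ is a nonzero finite $\Lambda$-submodule, let $W\subseteq Y$ denote its preimage so that
\begin{equation*}
0\to Z\to W\to F\to 0
\end{equation*}
is exact, and then construct a nonzero copy of $\mathbf{F}$ inside $W\subseteq Y$, contradicting the hypothesis on $Y$. Fix a topological generator $\gamma$ of $\Gamma$, so that $\Lambda\simeq\mathscr{O}[[\gamma-1]]$ is a regular local ring of Krull dimension $2$ with maximal ideal $\mathfrak{m}=(\pi,\gamma-1)$ and residue field $\mathbf{F}$.

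Since $F$ is a nonzero finite $\Lambda$-module, the $\mathbf{F}$-vector space $F[\mathfrak{m}]$ of elements killed by $\mathfrak{m}$ is nonzero. Choose $w\in W$ whose image in $F$ is a nonzero element of $F[\mathfrak{m}]$, and write $z_1=\pi w$ and $z_2=(\gamma-1)w$, both of which lie in $Z$. The commutativity of $\pi$ and $\gamma-1$ forces the compatibility $(\gamma-1)z_1=\pi z_2$ in $Z$.

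The crucial step is to find $z\in Z$ with $\pi z=z_1$ and $(\gamma-1)z=z_2$. This is the assertion that $(\pi,\gamma-1)$ remains a regular sequence when acting on the free module $Z\simeq\Lambda^r$, or equivalently that the first Koszul homology of this sequence on $Z$ vanishes; both forms are standard consequences of the regularity of $\Lambda$. Granting this, the element $w-z\in W$ satisfies $\pi(w-z)=(\gamma-1)(w-z)=0$, i.e.\ $\mathfrak{m}(w-z)=0$, and is nonzero because its image in $F$ equals the nonzero image of $w$. Therefore $\Lambda(w-z)$ is a nonzero cyclic $\Lambda$-module killed by $\mathfrak{m}$, hence isomorphic to $\mathbf{F}$, giving a nonzero finite $\Lambda$-submodule of $Y$ and the desired contradiction.

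The only genuine technical ingredient is the Koszul-style lifting in the middle step; this is a completely standard fact about a regular sequence acting on a free module over a regular local ring and presents no real obstacle. An equivalent but slicker formulation of the same idea is to note that $\mathrm{Ext}^1_\Lambda(F,Z)=0$ (using freeness of $Z$ together with the vanishing of $\mathrm{Ext}^1_\Lambda(F,\Lambda)$ for $F$ finite, which holds because $\Lambda$ is regular of dimension $2$), so the displayed short exact sequence splits, and then $F$ itself embeds as a nonzero finite $\Lambda$-submodule of $W\subseteq Y$, contradicting the hypothesis directly.
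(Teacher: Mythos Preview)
Your argument is correct. Both the direct Koszul-lifting step and the slicker $\mathrm{Ext}^1_\Lambda(F,\Lambda)=0$ reformulation are valid: the latter holds because $\Lambda$ is regular local of dimension $2$, so finite-length modules have nonvanishing $\mathrm{Ext}^i_\Lambda(-,\Lambda)$ only in degree $2$, and hence the extension $0\to Z\to W\to F\to 0$ splits.

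There is nothing to compare against in the paper itself: the paper does not give its own proof of this lemma but simply cites \cite[Lemma 2.6]{GV00}. Your write-up is a self-contained proof of the cited result.
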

\begin{proof}
See \cite[Lemma 2.6]{GV00}.
\end{proof}
\begin{lem}
\label{free-lambda}
Let $0\rightarrow M^\prime\rightarrow M\rightarrow M^{\prime\prime}\rightarrow 0$ be a short exact sequence of finitely generated $\Lambda$-modules with $M$ free over $\Lambda$ and $M^{\prime\prime}$ finitely generated and free over $\mathscr{O}$. Then $M^\prime$ is free over $\Lambda$.
\end{lem}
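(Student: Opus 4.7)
The plan is to show that $M''$ has projective dimension at most $1$ over $\Lambda$, and then deduce freeness of $M'$ via Schanuel's lemma, using that finitely generated projective modules over the local ring $\Lambda$ are free.

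For the projective-dimension estimate, identify $\Lambda$ with $\mathscr{O}[[T]]$ by choosing a topological generator $\gamma$ of $\Gamma$ and setting $T=\gamma-1$. Then $\Lambda$ is a regular local Noetherian ring of Krull dimension $2$ with maximal ideal $\mathfrak{m}_\Lambda=(\pi,T)$, so $\operatorname{depth}(\Lambda)=2$ and every finitely generated $\Lambda$-module has finite projective dimension. The Auslander--Buchsbaum formula
\[
\operatorname{pd}_\Lambda(N)+\operatorname{depth}_\Lambda(N)=2
\]
then applies to $N=M''$. Since $M''$ is $\mathscr{O}$-free of finite rank, $\pi\in\mathfrak{m}_\Lambda$ is a non-zero-divisor on $M''$, so $\operatorname{depth}_\Lambda(M'')\geq 1$ and hence $\operatorname{pd}_\Lambda(M'')\leq 1$. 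A slightly more hands-on alternative is to construct a length-$1$ free resolution explicitly: pick an $\mathscr{O}$-basis of $M''$ of rank $r$, let $A\in M_r(\mathscr{O})$ be the matrix of multiplication by $T$ in this basis, and verify that $0\to\Lambda^r\xrightarrow{TI-A}\Lambda^r\to M''\to 0$ is exact, with injectivity of $TI-A$ following from the fact that $\det(TI-A)$ is a monic polynomial in $T$ and therefore a non-zero-divisor in the domain $\Lambda$.

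With $\operatorname{pd}_\Lambda(M'')\leq 1$ in hand, pick any surjection from a finitely generated free $\Lambda$-module onto $M''$; its kernel is a finitely generated projective $\Lambda$-module (by the projective-dimension bound), hence free because $\Lambda$ is local. This gives a short free resolution $0\to F_1\to F_0\to M''\to 0$. Schanuel's lemma, applied to this resolution and the hypothesized sequence $0\to M'\to M\to M''\to 0$, yields
\[
M'\oplus F_0\simeq F_1\oplus M,
\]
and both summands on the right are free, so $M'$ is a direct summand of a finitely generated free $\Lambda$-module. Hence $M'$ is finitely generated and projective, and therefore free because $\Lambda$ is local. The main obstacle is the projective-dimension estimate for $M''$; once that is established, the remainder of the argument is routine homological algebra.
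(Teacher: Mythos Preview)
Your argument is correct and takes a genuinely different route from the paper's proof. The paper argues directly via the criterion (from \cite[Proposition 5.3.19(ii)]{NSW}) that a finitely generated $\Lambda$-module is free if and only if its $\Gamma$-invariants vanish and its $\Gamma$-coinvariants are $\mathscr{O}$-free; it then applies the snake lemma to multiplication by $\gamma-1$ on the given short exact sequence and reads off these two properties for $M'$ from the resulting six-term sequence. Your approach instead leverages the regularity of $\Lambda$ together with Auslander--Buchsbaum to bound $\operatorname{pd}_\Lambda(M'')\leq 1$, and then finishes with Schanuel's lemma and the locality of $\Lambda$. The paper's method is more elementary and stays entirely within the explicit Iwasawa-algebra framework, never invoking depth or projective dimension; your method is more conceptual and would apply verbatim with $\Lambda$ replaced by any two-dimensional regular local ring and $\mathscr{O}$ by a quotient by a regular parameter. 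One small remark: in your alternative explicit resolution $0\to\Lambda^r\xrightarrow{TI-A}\Lambda^r\to M''\to 0$, you verify injectivity of $TI-A$ but leave exactness in the middle (i.e., that the kernel of $\Lambda^r\to M''$ is exactly $(TI-A)\Lambda^r$) to the reader; this does hold, e.g.\ by reducing modulo $\pi$ and comparing $\mathbf{F}$-dimensions, but it is the nontrivial part of that verification. Since your primary argument via Auslander--Buchsbaum does not rely on this, it does not affect the validity of the proof.
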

\begin{proof}
A finitely generated $\Lambda$-module is free if and only if its module of invariants vanishes and its module of coinvariants is $\mathscr{O}$-free (\cite[Proposition 5.3.19 (ii)]{NSW}). Thus it suffices to show that $(M^\prime)^{\Gamma}=0$ and that $M^\prime_{\Gamma}$ is $\mathscr{O}$-free. Applying the snake lemma to the endomorphism of the short exact sequence $0\rightarrow M^\prime\rightarrow M\rightarrow M^{\prime\prime}\rightarrow 0$ given by multiplication by $\gamma-1$, where $\gamma\in\Gamma$ is a topological generator, we get an exact sequence
\begin{equation}
\label{snake}
0\rightarrow(M^\prime)^{\Gamma}\rightarrow M^{\Gamma}\rightarrow(M^{\prime\prime})^\Gamma\rightarrow M_\Gamma^\prime
\rightarrow M_\Gamma\rightarrow M^{\prime\prime}_\Gamma\rightarrow 0\text{.}
\end{equation}
Since $M$ is $\Lambda$-free, $M^{\Gamma}=0$, and it follows that $(M^\prime)^{\Gamma}=0$. Taking $N=\im(M_{\Gamma}^\prime\rightarrow M_\Gamma)$, we deduce from \cref{snake} and the vanishing of $M^{\Gamma}$ the exact sequence
\begin{equation}
\label{sl2}
0\rightarrow (M^{\prime\prime})^{\Gamma}\rightarrow M_\Gamma^\prime\rightarrow N\rightarrow 0\text{.}
\end{equation}
Because $M^{\prime\prime}$ (respectively $M_\Gamma$) is finitely generated and $\mathscr{O}$-free, so is its submodule $(M^{\prime\prime})^{\Gamma}$ (respectively $N$). Thus the sequence of $\mathscr{O}$-modules \cref{sl2} splits, and we find that $M_\Gamma^\prime$ is $\mathscr{O}$-free, being isomorphic to a direct sum of $\mathscr{O}$-free modules.
\end{proof}
\indent In the proof of the next theorem, we closely follow the argument for \cite[Proposition 2.5]{GV00}.%
\begin{thm}
\label{no-subs-non-prim}
Let $\Sigma_0$ be a finite set of primes of $F$ not containing any Archimedean primes or any primes of $\Sigma_p$. Assume that
\begin{enumerate}
\item $\Sigma_0$ contains the primes dividing the tame level of $f$,
\item the condition \emph{\cref{HA}} holds, and
\item for each prime $v\in\Sigma_0$ that splits completely in $F_\infty$, $H^0(F_v,A^*)=0$.
\end{enumerate}
Then $\Sel^{\Sigma_0}(F_\infty,A)$ has no proper $\Lambda$-submodules of finite index.
\end{thm}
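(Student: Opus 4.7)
The plan is to realize $\widehat{\Sel^{\Sigma_0}(F_\infty, A)}$ as a quotient $Y/Z$ of finitely generated $\Lambda$-modules in which $Y$ has no non-zero finite $\Lambda$-submodules and $Z \subseteq Y$ is a \emph{free} $\Lambda$-submodule, whereupon \cref{no-subs-quot} yields the conclusion (recall that $\Sel^{\Sigma_0}(F_\infty,A)$ has no proper $\Lambda$-submodules of finite index if and only if its Pontryagin dual has no non-zero finite $\Lambda$-submodules). I would take $\Sigma = \Sigma_0 \cup \Sigma_p$ (together with the Archimedean primes, whose contribution is trivial since we have set $\mathcal{H}_v=0$ for them). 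Hypothesis (i), that $\Sigma_0$ contains the primes dividing the tame level of $f$, is precisely what is needed to ensure $\Sigma$ satisfies \cref{Suff}, while hypothesis (iii) combined with \cref{NSp} (which rules out any $\mathfrak{p} \in \Sigma_p$ splitting completely in $F_\infty$) supplies the second input of \cref{sel-surj-2}. Applying that corollary yields the short exact sequence
\begin{equation*}
0 \to \Sel^{\Sigma_0}(F_\infty, A) \to H^1(F_\Sigma/F_\infty, A) \to \prod_{\mathfrak{p} \in \Sigma_p} \mathcal{H}_\mathfrak{p} \to 0.
\end{equation*}

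Pontryagin dualizing produces the short exact sequence of finitely generated $\Lambda$-modules
\begin{equation*}
0 \to \Bigl(\prod_{\mathfrak{p} \in \Sigma_p} \mathcal{H}_\mathfrak{p}\Bigr)^{\wedge} \to \widehat{H^1(F_\Sigma/F_\infty, A)} \to \widehat{\Sel^{\Sigma_0}(F_\infty, A)} \to 0,
\end{equation*}
and \cref{no-subs-global}, whose hypotheses are satisfied by the same $\Sigma$, tells us that the middle term has no non-zero finite $\Lambda$-submodules. Invoking \cref{no-subs-quot}, the theorem will follow once I verify that each $\widehat{\mathcal{H}_\mathfrak{p}}$ for $\mathfrak{p} \in \Sigma_p$ is a free $\Lambda$-module (so that the product is a free submodule of $\widehat{H^1(F_\Sigma/F_\infty, A)}$).

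This local freeness is the heart of the matter. By \cref{fd-structure}(ii), $\widehat{\mathcal{H}_\mathfrak{p}}$ is already a finitely generated $\Lambda$-module of rank $[F_\mathfrak{p} : \mathbf{Q}_p]$ and is $\pi$-torsion free (because $\mu(\mathcal{H}_\mathfrak{p}) = 0$). By the criterion of \cite[Proposition 5.3.19(ii)]{NSW} used in the proof of \cref{free-lambda}, freeness reduces to the twin assertions that $\widehat{\mathcal{H}_\mathfrak{p}}^\Gamma = 0$ and $(\widehat{\mathcal{H}_\mathfrak{p}})_\Gamma$ is $\mathscr{O}$-free. Pontryagin duality rephrases these as $(\mathcal{H}_\mathfrak{p})_\Gamma = 0$ and $\mathcal{H}_\mathfrak{p}^\Gamma$ being $\mathscr{O}$-divisible. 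Since $\mathcal{H}_\mathfrak{p}$ is a finite product over the primes $\mathfrak{P} \mid \mathfrak{p}$ of $F_\infty$ (by \cref{NSp}), each of these conditions reduces to a local one at each $\mathfrak{P}$. Local Tate duality identifies the Pontryagin dual of $H^1(F_{\infty,\mathfrak{P}}, A)/H^1_{\ord}(F_{\infty,\mathfrak{P}}, A)$ with a local Iwasawa module for the Cartier dual $T^*$ equipped with the dual Greenberg local condition, controlled by the subspace $(A/A_\mathfrak{p})^*$ of $T^*$. Hypothesis \cref{Ramp} is exactly the required input: since $(A/A_\mathfrak{p})^*$ is the twist by $\epsilon$ of an unramified character, ramified-ness of $\mu_p$ on $G_{F_\mathfrak{p}}$ forces $(A/A_\mathfrak{p})^*$ itself to have no $G_{F_\mathfrak{p}}$-invariants, from which an inflation-restriction argument in the $\Gamma_\mathfrak{P}$-tower $F_{\infty,\mathfrak{P}}/F_\mathfrak{p}$ propagates the vanishing up to yield both the triviality of $(\mathcal{H}_\mathfrak{p})_\Gamma$ and the $\mathscr{O}$-divisibility of $\mathcal{H}_\mathfrak{p}^\Gamma$.

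The main obstacle is precisely this local freeness verification: extracting the correct dualized local structure and leveraging \cref{Ramp} to pin down both the vanishing and divisibility statements over the tower. Everything else --- the duality formalism, the reduction to a quotient, and the final invocation of \cref{no-subs-quot} --- is formal and follows directly from the results already established in \cref{sel-surj-2} and \cref{no-subs-global}.
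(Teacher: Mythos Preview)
Your global strategy---choosing $\Sigma=\Sigma_0\cup\Sigma_p\cup\{\text{Archimedean}\}$, invoking \cref{sel-surj-2} to obtain the short exact sequence, dualizing, citing \cref{no-subs-global} for the middle term, and reducing via \cref{no-subs-quot} to the $\Lambda$-cofreeness of each $\mathcal{H}_\mathfrak{p}$---is exactly the paper's. The difference, and the place where your sketch becomes incomplete, is the local cofreeness argument.

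The paper does not pass to the Tate-dual Iwasawa module for $A^*$. Instead it works directly on the $A$-side with $D=A/A_\mathfrak{p}$: condition \cref{Ramp} is used (via local duality at the \emph{base} level, not at infinity) to show $H^2(F_\mathfrak{p},D[\pi])=0$, hence $H^1(F_\mathfrak{p},D)$ is $\mathscr{O}$-divisible; inflation--restriction then gives that $H^1(F_{\infty,\mathfrak{P}},D)^{\Gamma_\mathfrak{p}}$ is $\mathscr{O}$-cofree of corank $[F_\mathfrak{p}:\mathbf{Q}_p]$, and Nakayama promotes this to $\Lambda_\mathfrak{p}$-cofreeness of $H^1(F_{\infty,\mathfrak{P}},D)$. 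The crucial extra step your sketch does not account for is that $\mathcal{H}_\mathfrak{P}$ is \emph{not} $H^1(F_{\infty,\mathfrak{P}},D)$: because the ordinary condition at the infinite level is imposed on $I_\mathfrak{P}$ rather than the full decomposition group, one has $\mathcal{H}_\mathfrak{P}\simeq\im\bigl(H^1(F_{\infty,\mathfrak{P}},D)\to H^1(I_\mathfrak{P},D)\bigr)$, and the unramified kernel $H^1(G_{F_{\infty,\mathfrak{P}}}/I_\mathfrak{P},D)$ must be dealt with. The paper does this by a ramified/unramified case split on $\mathfrak{p}$ in $F_\infty$, invoking \cref{free-lambda} in the ramified case (this is precisely why that lemma is there).

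Your dual approach is not wrong in spirit---the vanishing of $H^0(F_\mathfrak{p},(A/A_\mathfrak{p})^*)$ you extract from \cref{Ramp} is equivalent to the paper's $H^2(F_\mathfrak{p},D[\pi])=0$---but the assertion that ``inflation--restriction propagates the vanishing up to yield both conditions'' hides the same inertia-versus-decomposition issue on the dual side, and you would still need an analogue of the case split and of \cref{free-lambda} to close the argument.
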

\begin{proof}
Let $\Sigma$ be the union of $\Sigma_0$, $\Sigma_p$, and the set of Archimedean primes. Then $\Sigma$ is satisfies \cref{Suff}, and the hypotheses of \cref{sel-surj-2} hold, so we have an exact sequence of $\Lambda$-modules
\begin{equation*}
0\rightarrow\Sel^{\Sigma_0}(F_\infty,A)\rightarrow H^1(F_\Sigma/F_\infty,A)\rightarrow\prod_{\mathfrak{p}\in\Sigma_p}\mathcal{H}_\mathfrak{p}\rightarrow 0\text{.}
\end{equation*}
Since $H^1(F_\Sigma/F_\infty,A)$ has no proper $\Lambda$-submodules of finite index by \cref{no-subs-global}, if we can prove that $\prod_{\mathfrak{p}\in\Sigma_p}\mathcal{H}_\mathfrak{p}$ is $\Lambda$-cofree, the result will follow from \cref{no-subs-quot}, wherein we take $Y=\widehat{H^1(F_\Sigma/F_\infty,A)}$, $Z=\prod_{\mathfrak{p}\in\Sigma_p}\widehat{\mathcal{H}_\mathfrak{p}}$, and $Y/Z\simeq\widehat{\Sel^{\Sigma_0}(F_\infty,f)}$. Following the proof of \cite[Proposition 2.5]{GV00}, we will prove that for each $\mathfrak{p}\in\Sigma_p$,
\begin{equation*}
\mathcal{H}_\mathfrak{p}=\prod_{\mathfrak{P}\mid\mathfrak{p}}H^1(F_{\infty,\mathfrak{P}},A)/H^1_{\ord}(F_{\infty,\mathfrak{P}},A)
\end{equation*}
is $\Lambda$-cofree.\\
\indent Fix $\mathfrak{p}\in\Sigma_p$ and let $D=A/A_\mathfrak{p}$. We first prove that $H^1(F_\mathfrak{p},D)$ is $\mathscr{O}$-cofree. The long exact cohomology sequence associated to map given by multiplication by $\pi$ on $D$ yields an injection
\begin{equation}
\label{piin}
H^1(F_\mathfrak{p},D)/\pi H^1(F_\mathfrak{p},D)\hookrightarrow H^2(F_\mathfrak{p},D[\pi])\text{.}
\end{equation}
The target of \cref{piin} is Cartier dual (as a finite $p$-group) to $H^0(F_\mathfrak{p},\Hom(D[\pi],\mu_p))$. If we had a non-zero, hence surjective $G_{F_\mathfrak{p}}$-equivariant homomorphism $\varphi:D[\pi]\rightarrow \mu_p$, then because $D[\pi]$ is unramified at $\mathfrak{p}$, $\mu_p$ would be unramified at $\mathfrak{p}$ as well. But this contradicts our assumption \cref{Ramp} that $\mu_p$ is a ramified $G_{F_\mathfrak{p}}$-module. So the module of $G_{F_\mathfrak{p}}$-invariants of $\Hom(D[\pi],\mu_p)$ must vanish, and thus $H^2(F_\mathfrak{p},D[\pi])=0$. By \cref{piin}, we infer that $H^1(F_\mathfrak{p},D)$ is $\mathscr{O}$-divisible. Since it is cofinitely generated as an $\mathscr{O}$-module, it is then $\mathscr{O}$-cofree, as desired. As for the corank of this $\mathscr{O}$-module, by the local Euler characteristic formula, we have
\begin{align*}
\corank_{\mathscr{O}}(H^1(F_\mathfrak{p},D))&=[F_\mathfrak{p}:\mathbf{Q}_p]+\corank_\mathscr{O}(H^0(F_\mathfrak{p},D))+\corank_\mathscr{O}(H^2(F_\mathfrak{p},D))\\
&=[F_\mathfrak{p}:\mathbf{Q}_p]
+\corank_\mathscr{O}(H^0(F_\mathfrak{p},D))\text{,}
\end{align*} 
where the last equality holds because the vanishing of $H^2(F_\mathfrak{p},D[\pi])$ implies that of $H^2(F_\mathfrak{p},D)$ (the $\pi$-torsion of the latter is a homomorphic image of the former).\\
\indent Now fix a prime $\mathfrak{P}$ of $F_\infty$ lying over $\mathfrak{p}$, let $\Gamma_\mathfrak{p}\subseteq\Gamma$ be the decomposition group for $\mathfrak{p}$ (which is non-trivial because $\mathfrak{p}$ does not split completely in $F_\infty$ by \cref{NSp}), and let $\Lambda_\mathfrak{p}=\mathscr{O}[[\Gamma_\mathfrak{p}]]$ be the Iwaswawa algebra of $\Gamma_\mathfrak{p}$. Because $\Gamma_\mathfrak{p}$ has cohomological dimension $1$, we have an inflation-restriction sequence
\begin{equation*}
0\rightarrow H^1(F_{\infty,\mathfrak{P}}/F_\mathfrak{p},D^{G_{F_{\infty,\mathfrak{P}}}})\rightarrow H^1(F_\mathfrak{p},D)
\rightarrow H^1(F_{\infty,\mathfrak{P}},D)^{\Gamma_\mathfrak{p}}\rightarrow 0\text{.}
\end{equation*}
The $\mathscr{O}$-corank of $H^1(F_{\infty,\mathfrak{P}}/F_\mathfrak{p},D^{G_{F_{\infty,\mathfrak{P}}}})$ is the same as the $\mathscr{O}$-corank of $H^0(F_\mathfrak{p},D)$, and from this and the computation of $\corank_\mathscr{O}(H^1(F_\mathfrak{p},D))$ above, it follows that $H^1(F_{\infty,\mathfrak{P}},D)^{\Gamma_\mathfrak{p}}$ is $\mathscr{O}$-cofree of corank $[F_\mathfrak{p}:\mathbf{Q}_p]$ (that it is $\mathscr{O}$-cofree follows from the fact that it is a quotient of $H^1(F_\mathfrak{p},D)$). By \cite[Proposition 1]{GR89}, the $\Lambda_\mathfrak{p}$-corank of $H^1(F_{\infty,\mathfrak{P}},D)$ is also $[F_\mathfrak{p}:\mathbf{Q}_p]$. An application of Nakayama's lemma now shows that $H^1(F_{\infty,\mathfrak{P}},D)$ is $\Lambda_\mathfrak{p}$-cofree of corank $[F_\mathfrak{p}:\mathbf{Q}_p]$.\\
\indent As $G_{F_{\infty,\mathfrak{P}}}$ has $p$-cohomological dimension $1$ (\cite[pg. 433]{Gr01}), the long exact cohomology sequence arising from the quotient map $A\to D$ implies that the homomorphism $H^1(F_{\infty,\mathfrak{P}},A)\rightarrow H^1(F_{\infty,\mathfrak{P}},D)$ is surjective. We therefore have
\begin{equation*}
\mathcal{H}_\mathfrak{P}=H^1(F_{\infty,\mathfrak{P}},A)/H^1_{\ord}(F_{\infty,\mathfrak{P}},A)\simeq\im(H^1(F_{\infty,\mathfrak{P}},D)\rightarrow H^1(I_\mathfrak{P},D))\text{.}
\end{equation*}
Since $D$ is unramified at $\mathfrak{p}$, the kernel of the restriction map to $I_\mathfrak{P}$ is $H^1(G_{F_{\infty,\mathfrak{P}}}/I_\mathfrak{P},D)$. If $\mathfrak{p}$ is unramified in $F_\infty$, then $G_{F_{\infty,\mathfrak{P}}}/I_\mathfrak{P}$ has pro-order prime to $p$, so the restriction map is injective, from which it follows that
\begin{equation*}
\mathcal{H}_\mathfrak{P}\simeq H^1(F_{\infty,\mathfrak{P}},D)\text{,}
\end{equation*}
so $\mathcal{H}_\mathfrak{P}$ is $\Lambda_\mathfrak{p}$-cofree because $H^1(F_{\infty,\mathfrak{P}},D)$ is. If instead $\mathfrak{p}$ is ramified in $F_\infty$, then $G_{F_{\infty,\mathfrak{P}}}/I_\mathfrak{P}$ is isomorphic to $\widehat{\mathbf{Z}}$, and so $H^1(G_{F_{\infty,\mathfrak{P}}}/I_\mathfrak{P},D)$ is a quotient of $D$, hence $\mathscr{O}$-cofree. Thus we have an exact sequence of finitely generated $\Lambda_\mathfrak{p}$-modules
\begin{equation*}
0\rightarrow\widehat{\mathcal{H}_\mathfrak{P}}\rightarrow\widehat{H^1(F_{\infty,\mathfrak{P}},D)}\rightarrow
\widehat{H^1(G_{F_{\infty,\mathfrak{P}}}/I_\mathfrak{P},D)}\rightarrow 0
\end{equation*}
satisfying the hypotheses of \cref{free-lambda}, which therefore implies that $\mathcal{H}_\mathfrak{P}$ is $\Lambda_\mathfrak{p}$-cofree. Thus, in either case, $\mathcal{H}_\mathfrak{P}$ is $\Lambda_\mathfrak{p}$-cofree.\\
\indent Finally we explain why $\mathcal{H}_\mathfrak{p}$ is cofree over $\Lambda$. The choice of a prime $\mathfrak{P}$ of $F_\infty$ above $\mathfrak{p}$ gives rise to an isomorphism of $\Lambda$-modules $\widehat{\mathcal{H}_\mathfrak{p}}\simeq\widehat{\mathcal{H}_\mathfrak{P}}\otimes_{\Lambda_\mathfrak{p}}\Lambda$. Since we have proved that $\mathcal{H}_\mathfrak{P}$ is $\Lambda_\mathfrak{p}$-cofree, we conclude that $\mathcal{H}_\mathfrak{p}$ is $\Lambda$-cofree.
\end{proof}
\begin{rem}
The proof of \cref{no-subs-non-prim} is the only instance where we make use of assumption \cref{Ramp}, but we do not see a way to remove this assumption.
\end{rem}
\begin{cor}
\label{divisible-Selmer}
Let $\Sigma_0$ be a finite set of primes of $F$ not containing any Archimedean primes or any primes of $\Sigma_p$. Assume that
\begin{enumerate}
\item $\Sigma_0$ contains the primes dividing the tame level of $f$,
\item the condition \emph{\cref{HA}} holds, and
\item for each prime $v\in\Sigma_0$ that splits completely in $F_\infty$, $H^0(F_v,A^*)=0$.
\end{enumerate}
Then the $\mu$-invariant of $\Sel^{\Sigma_0}(F_\infty,A)$ vanishes if and only if $\Sel^{\Sigma_0}(F_\infty,A[\pi])$ is finite, in which case $\Sel^{\Sigma_0}(F_\infty,A)$ is $\mathscr{O}$-divisible with
\begin{equation*}
\lambda(\Sel^{\Sigma_0}(F_\infty,A))=\dim_{\mathbf{F}}(\Sel^{\Sigma_0}(F_\infty,A[\pi]))\text{.}
\end{equation*}
\end{cor}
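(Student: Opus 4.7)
The plan is to work with the Pontryagin dual $X:=\widehat{\Sel^{\Sigma_0}(F_\infty,A)}$, which by \cref{sel-surj-2} and hypothesis \cref{HA} is a finitely generated torsion $\Lambda$-module with $\mu(X)$ and $\lambda(X)$ equal to the Iwasawa invariants of $\Sel^{\Sigma_0}(F_\infty,A)$. The strategy is to use \cref{no-subs-non-prim} to promote the elementary divisor pseudo-isomorphism of the structure theorem to a genuine injection, and then read off both directions of the equivalence from the resulting decomposition. As a preliminary dualization step, applying $\widehat{\phantom{x}}$ to $0\to\Sel^{\Sigma_0}(F_\infty,A)[\pi]\to\Sel^{\Sigma_0}(F_\infty,A)\xrightarrow{\pi}\Sel^{\Sigma_0}(F_\infty,A)$ identifies $X/\pi X$ with $\widehat{\Sel^{\Sigma_0}(F_\infty,A)[\pi]}$; combining with \cref{torsion-Selmer-isom} shows that finiteness of $\Sel^{\Sigma_0}(F_\infty,A[\pi])$ is equivalent to finiteness of $X/\pi X$, and that in the finite case the two have the same $\mathbf{F}$-dimension.

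By \cref{no-subs-non-prim}, $\Sel^{\Sigma_0}(F_\infty,A)$ contains no proper $\Lambda$-submodule of finite index, which dually means $X$ contains no nontrivial finite $\Lambda$-submodule. Since pseudo-null $\Lambda$-modules are precisely the finite ones in our two-dimensional regular local setting, the structure theorem (\cref{lambda-structure-thm}) then yields an injection $X\hookrightarrow E(X):=\bigoplus_{i=1}^{s}\Lambda/\pi^{a_i}\Lambda\oplus\bigoplus_{j=1}^{t}\Lambda/(f_j)$ with finite cokernel $Q$, where the $f_j$ are distinguished polynomials, $\mu(X)=\sum_i a_i$, and $\lambda(X)=\sum_j\deg f_j$.

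The equivalence then follows by direct inspection of this decomposition. If $\mu(X)=0$, there are no $\Lambda/\pi^{a_i}\Lambda$ summands, so $E(X)$ is $\mathscr{O}$-free of rank $\lambda(X)$ (each $\Lambda/(f_j)$ with $f_j$ distinguished of degree $d$ is $\mathscr{O}$-free of rank $d$); since $E(X)/X$ is finite and $X$ sits inside an $\mathscr{O}$-free module, $X$ is $\mathscr{O}$-torsion-free and finitely generated, hence $X\simeq\mathscr{O}^{\lambda(X)}$ as an $\mathscr{O}$-module. Dually $\Sel^{\Sigma_0}(F_\infty,A)\simeq(K/\mathscr{O})^{\lambda(X)}$ as $\mathscr{O}$-modules, giving the $\mathscr{O}$-divisibility and the formula $\lambda(\Sel^{\Sigma_0}(F_\infty,A))=\dim_{\mathbf{F}}\Sel^{\Sigma_0}(F_\infty,A)[\pi]=\dim_{\mathbf{F}}\Sel^{\Sigma_0}(F_\infty,A[\pi])$ via \cref{torsion-Selmer-isom}. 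Conversely, if $X/\pi X$ is finite, tensoring $0\to X\to E(X)\to Q\to 0$ with $\Lambda/\pi$ produces a four-term exact sequence $0\to Q[\pi]\to X/\pi X\to E(X)/\pi E(X)\to Q/\pi Q\to 0$; since $Q$ is finite, $E(X)/\pi E(X)$ must be finite, but any $\Lambda/\pi^{a_i}\Lambda$ summand of $E(X)$ would contribute a copy of the infinite module $\Lambda/\pi\Lambda\simeq\mathbf{F}[[T]]$ to $E(X)/\pi E(X)$, so no such summand occurs, i.e.\ $\mu(X)=0$.

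The critical input is \cref{no-subs-non-prim}: without the absence of finite $\Lambda$-submodules of $X$, the structure theorem would only give a pseudo-isomorphism $X\sim E(X)$ with possibly nontrivial finite kernel, which is insufficient to conclude $\mathscr{O}$-freeness of $X$ from $\mu(X)=0$ and therefore insufficient to extract the sharp equality of $\lambda(\Sel^{\Sigma_0}(F_\infty,A))$ with $\dim_{\mathbf{F}}\Sel^{\Sigma_0}(F_\infty,A[\pi])$. Everything else is a routine interplay between Pontryagin duality, the elementary divisor decomposition, and \cref{torsion-Selmer-isom}.
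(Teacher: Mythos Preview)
Your proof is correct and follows essentially the same approach as the paper: both invoke \cref{torsion-Selmer-isom} to identify the residual Selmer group with the $\pi$-torsion, the structure theorem to link vanishing of $\mu$ with finiteness of that $\pi$-torsion, and \cref{no-subs-non-prim} to kill the finite discrepancy and conclude $\mathscr{O}$-freeness of the dual. One small correction: your four-term Tor sequence should not in general carry the leading $0$ (the map $E(X)[\pi]\to Q[\pi]$ need not vanish), but this is harmless since your argument only uses the right-exact portion $X/\pi X\to E(X)/\pi E(X)\to Q/\pi Q\to 0$ together with finiteness of $Q$ to force $E(X)/\pi E(X)$ finite.
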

\begin{proof}
By \cref{torsion-Selmer-isom}, $\Sel^{\Sigma_0}(F_\infty,A[\pi])\simeq\Sel^{\Sigma_0}(F_\infty,A)[\pi]$ as $\mathscr{O}$-modules. The structure theorem for $\Lambda$-modules now implies that the finiteness of the residual Selmer group is equivalent to the vanishing of the $\mu$-invariant of $\Sel^{\Sigma_0}(F_\infty,A)$, and that when this happens, $\Sel^{\Sigma_0}(F_\infty,A)$ is a cofinitely generated $\mathscr{O}$-module. By \cref{no-subs-non-prim}, the $\mathscr{O}$-torsion submodule of the Pontryagin dual of $\Sel^{\Sigma_0}(F_\infty,A)$ must vanish. Thus $\Sel^{\Sigma_0}(F_\infty,A)$ is $\mathscr{O}$-cofree of corank equal to its $\lambda$-invariant, which now visibly coincides with
\begin{equation*}
\dim_\mathbf{F}(\Sel^{\Sigma_0}(F_\infty,A)[\pi])=\dim_\mathbf{F}(\Sel^{\Sigma_0}(F_\infty,A[\pi]))\text{.}
\end{equation*}
\end{proof}
\section{Algebraic $\lambda$-invariants}
\label{alg-inv}
\indent In this final section we use the structural results for Selmer groups established in the preceding sections to prove a result on the behavior of $\lambda$-invariants under congruences. In order to state the precise result we require some notation. Let $f_1,f_2$ be $p$-ordinary newforms (i.e. satisfying \cref{Ord}) of weight greater than or equal to $2$ (not necessarily the same weight), and tame levels $N_1$ and $N_2$, and assume that the Hecke eigenvalues of $f_1$ and $f_2$ are contained in $K$. We assume moreover that the $2$-dimensional Galois representations associated to $f_1$ and $f_2$ satisfy the conditions \cref{Irr} and \cref{Rampf}, i.e., that the residual representations on $G_F$ are absolutely irreducible and ramified at each $\mathfrak{p}\in\Sigma_p$. Choose $G_F$-stable lattices $T_1,T_2$ in the associated $K$-representation spaces of $f_1,f_2$ and let $A_1,A_2$ be the resulting discrete $\mathscr{O}$-torsion $G_F$-modules. Let $\Sel(F_\infty,A_1)$ and $\Sel(F_\infty,A_2)$ denote the Selmer groups for $f_1$ and $f_2$ over $F_\infty$ as defined in \cref{adic-selmer}, with the corresponding Iwasawa invariants denoted $\mu(f_1),\lambda(f_1)$ and $\mu(f_2),\lambda(f_2)$.\\
\indent Let $\Sigma_0$ be the set of primes of $F$ dividing $N=N_1N_2$, and let $\Sigma$ consist of the primes in $\Sigma_0$ together with the primes of $F$ dividing $\infty$ or $p$. We may write $N\mathscr{O}_F=(N\mathscr{O}_F)^f(N\mathscr{O}_F)^s$, where $(N\mathscr{O}_F)^f$ is divisible only by primes that are finitely decomposed in $F_\infty$ and $(N\mathscr{O}_F)^s$ is divisible only by primes that split completely in $F_\infty$. For a prime $v\notin\Sigma_p$ (respectively $\mathfrak{p}\in\Sigma_p$), denote by $\mathcal{H}_{v,i}$ (respectively $\mathcal{H}_{\mathfrak{p},i}$) the analogue for $A_i$ of the $\Lambda$-module $\mathcal{H}_v$ (respectively $\mathcal{H}_\mathfrak{p}$) defined in terms of local cohomology groups in the beginning of \cref{local-coh}. By \cref{local-non-split} (i), if $v\mid (N\mathscr{O}_F)^f$, $\mathcal{H}_{v,i}$ is a cotorsion $\Lambda$-module; let $\lambda_{v,i}$ be its $\lambda$-invariant (which is simply its $\mathscr{O}$-corank since it has $\mu$-invariant zero by loc. cit.). Finally, let the $\lambda$-invariant of $\Sel^{\Sigma_0}(F_\infty,A_i)$ be denoted by $\lambda(\Sigma_0,f_i)$ for $i=1,2$. 
\begin{thm}
\label{main}
For $i=1,2$, assume that $\mathcal{H}_{v,i}=0$ if $v\mid(N\mathscr{O}_F)^s$. Suppose $A_1[\pi]\simeq A_2[\pi]$ as $\mathbf{F}[G_F]$-modules. Then $\Sel(F_\infty,A_1)$ is $\Lambda$-cotorsion with $\mu(f_1)=0$ if and only if $\Sel(F_\infty,A_2)$ is $\Lambda$-cotorsion with $\mu(f_2)=0$. In this case, we have
\begin{equation*}
\lambda(f_1)-\lambda(f_2)=\sum_{v\mid (N\mathscr{O}_F)^f}\lambda_{v,2}-\lambda_{v,1}\text{.}
\end{equation*}
\end{thm}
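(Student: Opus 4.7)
The plan is to pass to the $\Sigma_0$-non-primitive Selmer groups, whose residual counterparts are intrinsically attached to $A_i[\pi]$ as a $G_F$-module (and are therefore identical for $i=1,2$), and then to propagate information back to $\Sel(F_\infty,A_i)$ via the short exact sequence supplied by \cref{sel-surj-2}. Before invoking that corollary, I would verify its condition (ii) for every prime $v\in\Sigma$ splitting completely in $F_\infty$: primes in $\Sigma_p$ are excluded by \cref{NSp}, archimedean primes do not contribute, and for $v\in\Sigma_0$ with $v\mid(N\mathscr{O}_F)^s$ the assumption $\mathcal{H}_{v,i}=0$ combined with \cref{split-local} forces $H^0(F_v,A_i^*)=0$.

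Granted \cref{HA} for $A_i$, \cref{sel-surj-2} then yields the short exact sequence
\begin{equation*}
0\rightarrow\Sel(F_\infty,A_i)\rightarrow\Sel^{\Sigma_0}(F_\infty,A_i)\rightarrow\prod_{v\in\Sigma_0}\mathcal{H}_{v,i}\rightarrow 0.
\end{equation*}
The factor $\mathcal{H}_{v,i}$ vanishes for $v\mid(N\mathscr{O}_F)^s$ by hypothesis and is $\Lambda$-cotorsion with trivial $\mu$-invariant and $\lambda$-invariant $\lambda_{v,i}$ for $v\mid(N\mathscr{O}_F)^f$ by \cref{local-non-split}(i), so additivity of Iwasawa invariants in short exact sequences of cotorsion modules gives
\begin{equation*}
\mu\bigl(\Sel^{\Sigma_0}(F_\infty,A_i)\bigr)=\mu(f_i),\qquad \lambda(\Sigma_0,f_i)=\lambda(f_i)+\sum_{v\mid(N\mathscr{O}_F)^f}\lambda_{v,i}.
\end{equation*}
In parallel, since $\Sigma_0$ contains every prime of $F$ dividing $N_i$, \cref{torsion-Selmer-isom} and \cref{intrinsic} applied to $A_1[\pi]\simeq A_2[\pi]$ produce a chain of $\mathscr{O}$-module isomorphisms
\begin{equation*}
\Sel^{\Sigma_0}(F_\infty,A_1)[\pi]\simeq\Sel^{\Sigma_0}(F_\infty,A_1[\pi])\simeq\Sel^{\Sigma_0}(F_\infty,A_2[\pi])\simeq\Sel^{\Sigma_0}(F_\infty,A_2)[\pi].
\end{equation*}

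For the stated equivalence, assume $\Sel(F_\infty,A_1)$ is $\Lambda$-cotorsion with $\mu(f_1)=0$; the display above together with \cref{divisible-Selmer} applied to $A_1$ then force $\Sel^{\Sigma_0}(F_\infty,A_1[\pi])$, and hence $\Sel^{\Sigma_0}(F_\infty,A_2)[\pi]$, to be finite. Because $\Sel^{\Sigma_0}(F_\infty,A_2)$ is cofinitely generated over $\Lambda$ (as a submodule of $H^1(F_\Sigma/F_\infty,A_2)$), the structure theorem \cref{lambda-structure-thm} applied to its Pontryagin dual $X$ shows that the finiteness of $X/\pi X$ kills both the $\Lambda$-free summand and the $\pi$-power torsion summands of $X$, so $\Sel^{\Sigma_0}(F_\infty,A_2)$, and \emph{a fortiori} its sub-$\Lambda$-module $\Sel(F_\infty,A_2)$, are $\Lambda$-cotorsion with vanishing $\mu$-invariant; the converse is symmetric. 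Having verified the hypotheses of \cref{divisible-Selmer} for both $i$, one reads off $\lambda(\Sigma_0,f_i)=\dim_{\mathbf{F}}\Sel^{\Sigma_0}(F_\infty,A_i[\pi])$, and the residual isomorphism above forces $\lambda(\Sigma_0,f_1)=\lambda(\Sigma_0,f_2)$; substituting the formula for $\lambda(\Sigma_0,f_i)$ and rearranging produces the claimed identity. The main subtlety is this bootstrapping step, which transfers the cotorsion and $\mu=0$ properties from $f_1$ to $f_2$ without assuming \cref{HA} for $f_2$ from the outset; it succeeds because a finitely generated $\Lambda$-module with finite mod-$\pi$ reduction is automatically $\Lambda$-torsion with $\mu=0$, and the required mod-$\pi$ finiteness is precisely what the residual isomorphism provides.
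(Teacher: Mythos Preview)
Your proof is correct and follows essentially the same approach as the paper's: verify condition (ii) of \cref{sel-surj-2} via \cref{split-local}, use the resulting short exact sequence together with \cref{local-non-split}(i) to relate the invariants of $\Sel(F_\infty,A_i)$ and $\Sel^{\Sigma_0}(F_\infty,A_i)$, transfer the cotorsion and $\mu=0$ properties from $A_1$ to $A_2$ via the residual isomorphism chain from \cref{torsion-Selmer-isom} and \cref{intrinsic} (using that finiteness of $X/\pi X$ forces $\Lambda$-torsion with $\mu=0$), and finally equate the non-primitive $\lambda$-invariants through \cref{divisible-Selmer}. The only difference is organizational: you front-load the exact sequence and invariant formulas for both $i$ under a provisional ``granted \cref{HA}'' and then return to establish it for $A_2$, whereas the paper proceeds linearly through the $A_1$ case before invoking \cref{sel-surj-2} for $A_2$.
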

\begin{proof}
First note that the hypothesis on the vanishing of the modules $\mathcal{H}_{v,i}$ for $v\mid(N\mathscr{O}_F)^s$ implies that $H^0(F_v,A_i^*)=0$ for $i=1,2$ and $v\mid(N\mathscr{O}_F)^s$ (because the $\mathscr{O}$-corank of $H^0(F_v,A_i^*)$ is the $\Lambda$-corank of $\mathcal{H}_{v,i}$, by \cref{split-local}). Suppose that $\Sel(F_\infty,A_1)$ is $\Lambda$-cotorsion with $\mu(f_1)=0$. Then the hypotheses of \cref{sel-surj-2} are satisfied for $A_1$ with our choices of $\Sigma_0$ and $\Sigma$, and we therefore have an exact sequence of $\Lambda$-modules
\begin{equation}
\label{key-1}
0\rightarrow\Sel(F_\infty,A_1)\rightarrow\Sel^{\Sigma_0}(F_\infty,A_1)\rightarrow\prod_{v\mid(N\mathscr{O}_F)^f}\mathcal{H}_{v,1}\rightarrow 0\text{,}
\end{equation}
taking into account the assumption that $\mathcal{H}_{v,1}=0$ for $v\mid(N\mathscr{O}_F)^s$. The target of the surjective map in \cref{key-1} is $\Lambda$-cotorsion with $\mu$-invariant zero by \cref{local-non-split}, and as we have assumed the same for $\Sel(F_\infty,A_1)$, we conclude that $\Sel^{\Sigma_0}(F_\infty,A_1)$ is also $\Lambda$-cotorsion with $\mu$-invariant zero. \cref{divisible-Selmer} now implies that $\Sel^{\Sigma_0}(F_\infty,A_1)$ is $\mathscr{O}$-divisible with $\Sel^{\Sigma_0}(F_\infty,A_1[\pi])$ finite of $\mathbf{F}$-dimension equal to the $\lambda$-invariant $\lambda(\Sigma_0,f_1)$ of $\Sel^{\Sigma_0}(F_\infty,A_1)$. \\
\indent By \cref{intrinsic}, the non-primitive residual Selmer groups $\Sel^{\Sigma_0}(F_\infty,A_i[\pi])$, $i=1,2$, are determined up to $\mathscr{O}$-module isomorphism by the $\mathbf{F}[G_F]$-module structures of $A_1[\pi]$ and $A_2[\pi]$, respectively. Since we have assumed that these $\mathbf{F}[G_F]$-modules are isomorphic, it therefore follows that we have $\Lambda$-module isomorphisms
\begin{equation}
\label{key-isom}
\Sel^{\Sigma_0}(F_\infty,A_1)[\pi]\simeq\Sel^{\Sigma_0}(F_\infty,A_1[\pi])
\simeq\Sel^{\Sigma_0}(F_\infty,A_2[\pi])\simeq\Sel^{\Sigma_0}(F_\infty,A_2)[\pi]\text{,}
\end{equation}
where the first and last isomorphisms come from \cref{torsion-Selmer-isom}. In particular, because $\Sel^{\Sigma_0}(F_\infty,A_1[\pi])$ is finite, the same is true of $\Sel^{\Sigma_0}(F_\infty,A_2)[\pi]$. This implies that $\Sel^{\Sigma_0}(F_\infty,A_2)$ is $\Lambda$-cotorsion with $\mu$-invariant equal to $0$, and since $\Sel(F_\infty,A_2)\subseteq\Sel^{\Sigma_0}(F_\infty,A_2)$, the same is true of $\Sel(F_\infty,A_2)$. The hypotheses of \cref{sel-surj-2} are therefore satisfied for $A_2$, so we have an exact sequence of cotorsion $\Lambda$-modules
\begin{equation}
\label{key-2}
0\rightarrow\Sel(F_\infty,A_2)\rightarrow\Sel^{\Sigma_0}(F_\infty,A_2)\rightarrow\prod_{v\mid(N\mathscr{O}_F)^f}\mathcal{H}_{v,2}\rightarrow 0\text{.}
\end{equation}
The additivity of $\lambda$-invariants in short exact sequences of cotorsion $\Lambda$-modules applied to the sequences \cref{key-1} and \cref{key-2} gives
\begin{equation}
\label{key-3}
\lambda(f_1)+\sum_{v\mid(N\mathscr{O}_F)^f}\lambda_{v,1}=\lambda(\Sigma_0,f_1)
\end{equation}
and
\begin{equation}
\label{key-4}
\lambda(f_2)+\sum_{v\mid(N\mathscr{O}_F)^f}\lambda_{v,2}=\lambda(\Sigma_0,f_2)\text{.}
\end{equation}
The isomorphism \cref{key-isom} together with \cref{divisible-Selmer} gives $\lambda(\Sigma_0,f_1)=\lambda(\Sigma_0,f_2)$. Thus the right-hand sides of \cref{key-3} and \cref{key-4} are equal, so upon equating the left-hand sides and rearranging, we obtain
\begin{equation*}
\lambda(f_1)-\lambda(f_2)=\sum_{v\mid(N\mathscr{O}_F)^f}\lambda_{v,2}-\lambda_{v,1}\text{,}
\end{equation*}
as desired. 
\end{proof}
\indent The essential content of the theorem is that, under the various hypotheses, the $\lambda$-invariants of the respective modular forms depend only on the residual representations along with data coming from local cohomology at primes dividing the product of the tame levels. This result is formally and thematically similar to \cite[Theorem 4.3.3, 4.3.4]{EPW}, \cite[Theorem 3.1, 3.2]{W}, and \cite[Theorem 7.1]{PW11}, which apply in the cases $F=\mathbf{Q}$, $F_\infty$ a cyclotomic $\mathbf{Z}_p$-extension, and $F_\infty$ the anticyclotomic $\mathbf{Z}_p$-extension of an imaginary quadratic $F$, respectively (the results in these references are stated in a somewhat different form from ours, using the framework of Hida families or Galois deformations, and while \cite{EPW} and \cite{PW11} consider representations associated to modular forms as we do, \cite{W} considers higher-dimensional ``nearly ordinary" Galois representations). The theorem is a direct generalization of (and was motivated by) \cite[p. 237]{Gr10}, which addresses the case $F=\mathbf{Q}$ and $f$ corresponding to an elliptic curve, modulo the caveat that Greenberg assumes the $p$-torsion representations of the elliptic curves with which he works are merely irreducible instead of absolutely so. But this is to be expected, as for elliptic curves, one has a canonical choice of lattice in the corresponding $p$-adic Galois representation furnished by the Tate module, whereas an (essentially) unique integral structure in the general modular form case is only guaranteed under the hypothesis of residual absolute irreducibility. As mentioned in \cref{intro}, the arguments in this paper work in the elliptic curve case (i.e. for the $p$-adic Tate modules of $p$-ordinary elliptic curves defined over the base field $F$) under the weaker residual irreducibility hypothesis, with some of the technicalities simplified. \\
\indent The hypothesis that the modules $\mathcal{H}_{v,i}$ vanish for those $v$ which split completely in $F_\infty$, which is of course vacuous if all primes dividing $N$ are finitely decomposed in $F_\infty$, seems (unfortunately) unavoidable in our approach to \cref{main} (note that the assumption used repeatedly in earlier results of the vanishing of $H^0(F_v,A_i^*)$ for $v$ splitting completely in $F_\infty$ is equivalent to the weaker condition that the corresponding $\mathcal{H}_{v,i}$ be $\Lambda$-cotorsion by \cref{split-local}). The point is that our comparison method, based on that of \cite{GV00}, breaks down in the presence of a positive $\mu$-invariant for the Selmer groups under consideration. The modules $\mathcal{H}_{v,i}$, if finite but non-zero, \emph{will} have positive $\mu$-invariant by \cref{split-local}, which, given the second exact sequence of \cref{sel-surj-2}, would force $\Sel^{\Sigma_0}(F_\infty,A)$ to have positive $\mu$-invariant. When this occurs, we cannot naively read off the $\lambda$-invariant of this non-primitive Selmer group from its $\pi$-torsion (which is in fact infinite). As indicated in \cite[Theorem 1.1]{PW11}, it is possible (at least in the anticyclotomic setting, in contrast to what is expected for the cyclotomic setting) for the Selmer group to have a positive $\mu$-invariant, which appears to come directly from the local cohomology at primes which are split completely in $F_\infty$. The theorem suggests that, in this setting, to compare invariants, one should instead look at the aforementioned minimal Selmer group, and indeed, \cref{strict-Greenberg} shows that in a precise sense such primes which divide the relevant level account completely for the difference between the minimal Selmer group and Greenberg's (which we have used). We emphasize, however, that \cref{main} is the only instance where we are forced to use this hypothesis. It seems very likely that one could work systematically with minimal Selmer groups using the methods of this paper and obtain an analogue of \cref{main} without the hypothesis, but we have not written down the details. As we learned after the completion of this work, a comparison theorem for $\lambda$-invariants accommodating positive $\mu$-invariants has been established in \cite{Hach11} by initially working with a stricter variant of minimal Selmer groups, but the method of loc. cit. is quite different from ours and depends on higher congruences (i.e. isomorphisms of residual representations modulo possibly larger powers of $\pi$).
\appendix
\section{Iwasawa invariants of direct limits of discrete $\Lambda$-modules}
\label{app-a}
\indent In this appendix we establish some results about the structure of direct limits of certain discrete $\Lambda$-modules and their Pontryagin duals.\\
\indent Let $\Gamma$ be a free pro-$p$ group of rank one (i.e. $\Gamma$ is topologically isomorphic to the additive group $\mathbf{Z}_p$), $\mathscr{O}$ the ring of integers in a finite extension $K$ of $\mathbf{Q}_p$ with uniformizer $\pi$, and let $\Lambda=\mathscr{O}[[\Gamma]]$ be the completed group ring of $\Gamma$ with coefficients in $\mathscr{O}$ (\cite[Definition 5.2.1]{NSW}). Unlike in the main body of the paper, we do not require that $p$ be odd. The Pontryagin dual of a discrete $\mathscr{O}$-torsion $\Lambda$-module $M$ is $\widehat{M}=\Hom_\mathscr{O}(M,K/\mathscr{O})$, equipped with the compact-open topology (taking the discrete topology on $K/\mathscr{O}$); $\widehat{M}$ is then a profinite topological $\mathscr{O}$-module with $\Lambda$-module structure determined by letting any $\gamma\in\Gamma$ act on a homomorphism $\chi:M\to K/\mathscr{O}$ by $(\gamma\chi)(m)=\chi(\gamma^{-1}m)$. This gives an $\mathscr{O}[\Gamma]$-module structure which extends uniquely to a topological $\Lambda$-module structure by continuity. (Alternatively, one can simply take the $\Lambda$-action defined by $(\lambda\chi)(m)=\chi(\lambda m)$ and twist it by the automorphism of $\Lambda$ induced by inversion on $\Gamma$.) Recall that such a $\Lambda$-module $M$ is \emph{cofinitely generated} if $\widehat{M}$ is a finitely generated $\Lambda$-module. In a similar manner (taking \emph{continuous} homomorphisms) we can define the Pontryagin dual of a finitely generated $\Lambda$-module $X$ (such an $X$ carries a unique profinite topology for which the $\Lambda$-action is continuous by e.g. \cite[Proposition 5.2.23]{NSW}, namely the max-adic topology from the Noetherian local ring $\Lambda$); in this case $\widehat{X}$ is a discrete $\mathscr{O}$-torsion $\Lambda$-module, and Pontryagin duality sets up an anti-equivalence of categories between cofinitely generated discrete $\mathscr{O}$-torsion $\Lambda$-modules and finitely generated $\Lambda$-modules, with the usual double duality isomorphisms $\Lambda$-linear. As we will deal exclusively with cofinitely generated discrete $\mathscr{O}$-torsion $\Lambda$-modules and their Pontryagin duals (which are then finitely generated), we can essentially ignore topological considerations, e.g. continuity of $\Lambda$-module homomorphisms between modules of either of these types is automatic (this being apparent in the discrete case and a consequence of loc. cit. in the finitely generated case). Note also that a map between arbitrary Hausdorff topological $\Lambda$-modules is a $\Lambda$-module homomorphism if and only if it is an $\mathscr{O}[\Gamma]$-module homomorphism, i.e., $\mathscr{O}$-linear and $\Gamma$-equivariant. This follows from the density of the abstract group ring in $\Lambda$, and we will use it without comment in what follows. We begin by recalling the structure theorem for finitely generated $\Lambda$-modules. 
\begin{thm}
\label{lambda-structure-thm}
If $X$ is a finitely generated $\Lambda$-module, then there is a pseudo-isomorphism
\begin{equation*}
X\to\Lambda^r\oplus\sum_{i=1}^t\Lambda/\pi^{m_i}\Lambda\oplus\sum_{j=1}^s\Lambda/\mathfrak{p}_j^{e_j}\text{,}
\end{equation*}
where $r,t,s\geq 0$ (with the convention that if one of these is zero, the summand doesn't appear), the $m_i$ and $e_j$ are positive integers, and the $\mathfrak{p}_j$ are (not necessarily distinct) height one prime ideals of $\Lambda$, all uniquely determined by $X$ (a pseudo-isomorphism is a $\Lambda$-module homomorphism with finite kernel and cokernel). 
\end{thm}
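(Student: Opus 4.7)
The plan is to prove this by the standard reduction-and-decomposition scheme for modules over the specific two-dimensional regular local ring $\Lambda$. First I would fix a topological generator $\gamma\in\Gamma$ and invoke the Serre isomorphism $\Lambda\xrightarrow{\sim}\mathscr{O}[[T]]$ sending $\gamma\mapsto 1+T$, which presents $\Lambda$ as a complete regular local Noetherian ring of Krull dimension two with maximal ideal $\mathfrak{m}=(\pi,T)$. Using the Weierstrass preparation theorem I would then show that $\Lambda$ is a UFD whose height-one prime ideals are exactly $(\pi)$ together with the principal ideals $(P)$ for $P$ an irreducible distinguished polynomial in $\mathscr{O}[T]$; in particular, for every height-one prime $\mathfrak{p}$, the localization $\Lambda_\mathfrak{p}$ is a discrete valuation ring.

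Next I would establish the pseudo-null identification that drives the whole theorem: a finitely generated $\Lambda$-module $M$ is pseudo-null (pseudo-isomorphic to $0$) if and only if $M_\mathfrak{p}=0$ for every height-one prime $\mathfrak{p}$, if and only if $\mathrm{Supp}(M)\subseteq\{\mathfrak{m}\}$, and (since $\Lambda/\mathfrak{m}$ is finite and $M$ is annihilated by some power of $\mathfrak{m}$) if and only if $M$ is finite. This yields the key local-to-global criterion: any $\Lambda$-homomorphism $f:X\to Y$ between finitely generated modules which is an isomorphism at every height-one localization is automatically a pseudo-isomorphism, because $\ker(f)$ and $\mathrm{coker}(f)$ then have empty support away from $\mathfrak{m}$.

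For the existence part of the decomposition, I would split $X$ via the exact sequence $0\to X_{\mathrm{tors}}\to X\to X/X_{\mathrm{tors}}\to 0$. The natural map $X/X_{\mathrm{tors}}\to(X/X_{\mathrm{tors}})^{\ast\ast}$ into the double $\Lambda$-dual is injective with cokernel of codimension $\geq 2$ (hence finite), and over the two-dimensional regular local ring $\Lambda$ the reflexive module $(X/X_{\mathrm{tors}})^{\ast\ast}$ is free of rank $r$ equal to the generic $\Lambda$-rank of $X$; this gives the $\Lambda^r$ summand up to pseudo-isomorphism. For $X_{\mathrm{tors}}$, its support is a finite set of height-one primes $\mathfrak{p}_1,\ldots,\mathfrak{p}_N$, and at each $\mathfrak{p}_k$ the localization $(X_{\mathrm{tors}})_{\mathfrak{p}_k}$ is a finitely generated torsion module over the DVR $\Lambda_{\mathfrak{p}_k}$, so the elementary divisor theorem gives $(X_{\mathrm{tors}})_{\mathfrak{p}_k}\cong\bigoplus_j\Lambda_{\mathfrak{p}_k}/\mathfrak{p}_k^{e_{k,j}}\Lambda_{\mathfrak{p}_k}$. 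Assembling these local data, I would produce a global candidate $Y=\bigoplus_{k,j}\Lambda/\mathfrak{p}_k^{e_{k,j}}$ and a $\Lambda$-map $X_{\mathrm{tors}}\to Y$ that is an isomorphism at each height-one prime, hence a pseudo-isomorphism by the criterion above. Separating the summands with $\mathfrak{p}_k=(\pi)$ (contributing the $\Lambda/\pi^{m_i}\Lambda$ factors) from those with $\mathfrak{p}_k\neq(\pi)$ (contributing the $\Lambda/\mathfrak{p}_j^{e_j}$ factors) produces the stated normal form. Uniqueness of $r$, of the multiset $\{m_i\}$, and of the multiset $\{(\mathfrak{p}_j,e_j)\}$ is then intrinsic, being read off respectively from the generic rank and from the DVR elementary divisors at each height-one localization.

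The main obstacle is the globalization step: turning the abstract local isomorphisms $(X_{\mathrm{tors}})_{\mathfrak{p}_k}\cong Y_{\mathfrak{p}_k}$ into a single global $\Lambda$-linear map $X_{\mathrm{tors}}\to Y$. I would carry this out by primary induction on the size of the support, first cleanly splitting off the $(\pi)$-primary part of $X_{\mathrm{tors}}$ (which is a direct summand up to pseudo-null error because distinct height-one primes are coprime outside $\mathfrak{m}$), then at each remaining $\mathfrak{p}_k$ choosing a generator of $\mathfrak{p}_k$, lifting cyclic generators of the local decomposition back through the localization map via Nakayama-type clearing-of-denominators arguments, and checking that the resulting global map induces the prescribed isomorphism on each $\mathfrak{p}_k$-localization. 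The pseudo-null discrepancies introduced at each inductive stage can be absorbed at the end by a single application of the local-to-global criterion of the previous paragraph.
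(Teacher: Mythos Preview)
The paper does not give its own proof of this theorem; it simply cites \cite[5.3.8]{NSW}. Your sketch is the standard argument one finds there (and in most treatments of Iwasawa theory): identify $\Lambda\simeq\mathscr{O}[[T]]$ via a choice of topological generator, use Weierstrass preparation to classify the height-one primes, characterize pseudo-null as finite, handle the torsion-free quotient via reflexivity over a two-dimensional regular local ring, and decompose the torsion part by localizing at each height-one prime in its support and invoking the DVR elementary divisor theorem. So there is nothing to compare beyond noting that you have correctly reconstructed the argument the paper outsources.

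One small point worth tightening in your write-up: you treat $X_{\mathrm{tors}}$ and $X/X_{\mathrm{tors}}$ separately, obtaining pseudo-isomorphisms $X_{\mathrm{tors}}\to Y$ and $X/X_{\mathrm{tors}}\to\Lambda^r$, but you do not explicitly say how to assemble these into a single map $X\to\Lambda^r\oplus Y$. The cleanest way is to observe that the composite $X\to X/X_{\mathrm{tors}}\hookrightarrow\Lambda^r$ has a section modulo a finite submodule (since $\Lambda^r$ is free and the cokernel of $X/X_{\mathrm{tors}}\hookrightarrow\Lambda^r$ is finite, a suitable multiple of the identity on $\Lambda^r$ factors back through $X$), which gives a direct-sum decomposition of $X$ up to pseudo-isomorphism; alternatively, one can invoke the fact that pseudo-isomorphism is an equivalence relation on finitely generated $\Lambda$-modules and that it respects short exact sequences with a free quotient. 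This is routine but should be mentioned, as the theorem asserts a single pseudo-isomorphism out of $X$.
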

\begin{proof}
See \cite[5.3.8]{NSW}.
\end{proof}
Using the data of \cref{lambda-structure-thm}, we make the following definition of the Iwasawa invariants of $X$ and its dual.
\begin{defn}
\label{iwasawa-inv-defn}
If $X$ is a finitely generated $\Lambda$-module and we choose a pseudo-isomorphism as in \cref{lambda-structure-thm}, then the integer $r$ is the \emph{$\Lambda$-rank} of $X$, and the non-negative integers $\sum_{i=1}^t m_i$ and $\sum_{j=1}^s \rank_\mathscr{O}(\Lambda/\mathfrak{p}_j^{e_j})$ are the \emph{$\mu$-invariant} and the \emph{$\lambda$-invariant} of $X$, respectively. These are denoted $\rank_\Lambda(X)$, $\mu(X)$, and $\lambda(X)$. If $M=\widehat{X}$, we define the \emph{$\Lambda$-corank} of $M$ to be $\corank_\Lambda(M)=\rank_\Lambda(X)$, and the $\mu$ and $\lambda$-invariants of $M$ to be $\mu(M)=\mu(X)$, and $\lambda(M)=\lambda(X)$. We say that $M$ is \emph{$\Lambda$-cotorsion} if $X$ is $\Lambda$-torsion, i.e. if $\corank_\Lambda(M)=\rank_\Lambda(X)=0$.
\end{defn}
\indent For $n\geq 0$, let $\Gamma_n=\Gamma^{p^n}$, $G_n=\Gamma/\Gamma^{p^n}$, and for $m\geq 1$, let $\Lambda_{n,m}=(\mathscr{O}/\pi^m\mathscr{O})[G_n]$ and $\Lambda_n=\mathscr{O}[G_n]$. These are finitely generated $\Lambda$-modules, the former being discrete and $\mathscr{O}$-torsion. For fixed $m$, the $\Lambda_{n,m}$ form an inverse system and a directed system of $\Lambda$-modules. The transition maps for the inverse system are the natural restriction maps $\Res_{n,m}:\Lambda_{n+1,m}\rightarrow\Lambda_{n,m}$ induced by $\Res_{n,m}(g^\prime)=g^\prime\vert_{G_n}$ for $g^\prime\in G_{n+1}$, where $g^\prime\vert_{G_n}$ denotes the canonical image of $g^\prime$ in $G_n$. The transition maps for the directed system are the natural corestriction maps $\Cor_{n,m}:\Lambda_{n,m}\rightarrow\Lambda_{n+1,m}$ induced by $\Cor_{n,m}(g)=\sum_{g^\prime\in G_{n+1},\Res_{n,m}(g^\prime)=g}g^\prime$. Similarly the modules $\Lambda_n$ form an inverse system and a directed system via restriction maps $\Res_n:\Lambda_{n+1}\rightarrow\Lambda_n$ and corestriction maps $\Cor_n:\Lambda_n\rightarrow\Lambda_{n+1}$ defined by the same formulas. Of course,  $\Lambda=\varprojlim_n\Lambda_n$ by definition. Our first result concerns the self-duality of $\Lambda_{n,m}$. 
\begin{prop}
\label{self-dual}
For $n\geq 0,m\geq 1$, there are canonical $\Lambda$-module isomorphisms 
\begin{equation*}
\varphi_{n,m}:\widehat{\Lambda_{n,m}}\rightarrow\Lambda_{n,m}
\end{equation*}
under which $\widehat{\Res_{n,m}}=\Cor_{n,m}$ and $\widehat{\Cor_{n,m}}=\Res_{n,m}$. More precisely, we have the equalities $\Cor_{n,m}\circ\varphi_{n,m}=\varphi_{n+1,m}\circ\widehat{\Res_{n,m}}$ and $\Res_{n,m}\circ\varphi_{n+1,m}=\varphi_{n,m}\circ\widehat{\Cor_{n,m}}$.
\end{prop}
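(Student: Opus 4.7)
The plan is to construct $\varphi_{n,m}$ explicitly via the natural isomorphism of $\mathscr{O}$-modules $c:\pi^{-m}\mathscr{O}/\mathscr{O}\xrightarrow{\sim}\mathscr{O}/\pi^m\mathscr{O}$ (multiplication by $\pi^m$), and then verify the required $\Lambda$-linearity, bijectivity, and compatibility properties by direct bookkeeping. Since $\Lambda_{n,m}$ is annihilated by $\pi^m$, any $\chi\in\widehat{\Lambda_{n,m}}$ factors through $\pi^{-m}\mathscr{O}/\mathscr{O}$, so the formula
\begin{equation*}
\varphi_{n,m}(\chi)=\sum_{g\in G_n} c(\chi(g))\cdot g\in\Lambda_{n,m}
\end{equation*}
makes sense. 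I would then note that, since $\{g:g\in G_n\}$ is a free $\mathscr{O}/\pi^m\mathscr{O}$-basis of $\Lambda_{n,m}$, evaluating at the basis elements gives an $\mathscr{O}$-linear isomorphism $\widehat{\Lambda_{n,m}}\simeq\prod_{g\in G_n}(\pi^{-m}\mathscr{O}/\mathscr{O})$, from which the bijectivity of $\varphi_{n,m}$ is clear.

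The next step is $\Gamma$-equivariance. Using the convention $(\gamma\chi)(x)=\chi(\gamma^{-1}x)$ from the preceding discussion, I would compute
\begin{equation*}
\varphi_{n,m}(\gamma\chi)=\sum_{g\in G_n}c(\chi(\gamma^{-1}g))g=\sum_{h\in G_n}c(\chi(h))\gamma h = \gamma\cdot\varphi_{n,m}(\chi),
\end{equation*}
where I substitute $h=\gamma^{-1}g$ and crucially use that $G_n$ is abelian so that $\gamma h=h\gamma$ is unambiguous (this is why the naive definition works without needing to insert $g\mapsto g^{-1}$). Combined with $\mathscr{O}$-linearity in $\chi$ (which is immediate because $c$ is $\mathscr{O}$-linear), this makes $\varphi_{n,m}$ into an $\mathscr{O}[\Gamma]$-module isomorphism and hence, by density of $\mathscr{O}[\Gamma]\subseteq\Lambda$ plus continuity remarks from earlier in the appendix, a $\Lambda$-module isomorphism.

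For the compatibility statements, I would carry out two parallel direct computations. For $\chi\in\widehat{\Lambda_{n,m}}$,
\begin{equation*}
\Cor_{n,m}(\varphi_{n,m}(\chi))=\sum_{g\in G_n}c(\chi(g))\!\!\sum_{\substack{g'\in G_{n+1}\\ g'|_{G_n}=g}}\!\!g'=\sum_{g'\in G_{n+1}}c(\chi(g'|_{G_n}))g'=\varphi_{n+1,m}(\chi\circ\Res_{n,m}),
\end{equation*}
which is $\varphi_{n+1,m}(\widehat{\Res_{n,m}}(\chi))$. Conversely, for $\chi'\in\widehat{\Lambda_{n+1,m}}$, grouping the sum over $g'\in G_{n+1}$ according to the fiber over $g\in G_n$ and pulling the $\mathscr{O}$-linear map $c$ out of the finite sum inside, I get
\begin{equation*}
\Res_{n,m}(\varphi_{n+1,m}(\chi'))=\sum_{g\in G_n}c\Bigl(\chi'\Bigl(\!\!\sum_{\substack{g'\in G_{n+1}\\ g'|_{G_n}=g}}\!\!g'\Bigr)\Bigr)g=\sum_{g\in G_n}c(\chi'(\Cor_{n,m}(g)))g,
\end{equation*}
which is $\varphi_{n,m}(\widehat{\Cor_{n,m}}(\chi'))$. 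Both equalities follow.

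There is no real obstacle here; the entire proposition is a careful exercise in unwinding definitions. The only point requiring a bit of care is the sign/inverse convention in the $\Gamma$-action on Pontryagin duals (the reason the formula above works as stated is precisely that $G_n$ is abelian, so I do not need to insert an inversion on $g$). Everything else is a mechanical change of summation variable in a finite group ring.
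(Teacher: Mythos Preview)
Your proof is correct and follows essentially the same route as the paper's: you construct $\varphi_{n,m}$ via the explicit formula $\chi\mapsto\sum_{g\in G_n}[\pi^m](\chi(g))\,g$ (your $c$ is the paper's $[\pi^m]$), verify $\Gamma$-equivariance by the same change of summation variable, and check both compatibility diagrams by the identical fiberwise regrouping over $G_{n+1}\to G_n$. One minor remark: your parenthetical about abelianness being ``crucial'' for the equivariance step is not quite right---the substitution $g=\gamma h$ and the identity $\sum_h c(\chi(h))\,\gamma h=\gamma\sum_h c(\chi(h))\,h$ in the group ring hold regardless of commutativity---but this does not affect the validity of your argument.
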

\begin{proof}
The assertion is that there are isomorphisms $\varphi_{n,m}$ making the diagrams
\begin{equation*}
\xymatrix{\widehat{\Lambda_{n,m}}\ar[r]^{\widehat{\Res_{n,m}}} \ar[d]_{\varphi_{n,m}} & \widehat{\Lambda_{n+1,m}}\ar[d]^{\varphi_{n+1,m}}\\
\Lambda_{n,m}\ar[r]_{\Cor_{n,m}} & \Lambda_{n+1,m}}
\end{equation*}
and
\begin{equation*}
\xymatrix{\widehat{\Lambda_{n+1,m}}\ar[r]^{\widehat{\Cor_{n,m}}} \ar[d]_{\varphi_{n+1,m}} & \widehat{\Lambda_{n,m}}\ar[d]^{\varphi_{n,m}}\\
\Lambda_{n+1,m}\ar[r]_{\Res_{n,m}} & \Lambda_{n,m}}
\end{equation*}
commute. We have $\widehat{\Lambda_{n,m}}=\Hom_\mathscr{O}(\Lambda_{n,m},K/\mathscr{O})$. Since $\pi^m$ kills $\Lambda_{n,m}$, this is the same as
\begin{equation*}
\Hom_{\mathscr{O}/\pi^m\mathscr{O}}(\Lambda_{n,m},\pi^{-m}\mathscr{O}/\mathscr{O})\simeq\Hom_{\mathscr{O}/\pi^m\mathscr{O}}
(\Lambda_{n,m},\mathscr{O}/\pi^m)\simeq\Lambda_{n,m}\text{,}
\end{equation*}
where the first isomorphism comes from $[\pi^m]:\pi^{-m}\mathscr{O}/\mathscr{O}\rightarrow\mathscr{O}/\pi^m\mathscr{O}$ and the second isomorphism sends $\chi:\Lambda_{n,m}\rightarrow \mathscr{O}/\pi^m\mathscr{O}$ to $\sum_{g\in G_n}\chi(g) g$. We define $\varphi_{n,m}$ to be the composite of these isomorphisms. So, explicitly, $\varphi_{n,m}$ sends $\chi:\Lambda_{n,m}\rightarrow K/\mathscr{O}$ to $\sum_{g\in G_n}[\pi^m](\chi(g))g$. It is clear that $\varphi_{n,m}$ is an $\mathscr{O}$-module isomorphism, and we have, for $h\in G_n$,
\begin{align*}
\varphi_{n,m}(h\chi)&=\sum_{g\in G_n}[\pi^m]((h\chi)(g))g\\
&=\sum_{g\in G_n}[\pi^m](\chi(h^{-1}g))g\\
&=\sum_{g\in G_n}[\pi^m](\chi(g))(hg)\\
&=h\bigg(\sum_{g\in G_n}[\pi^m](\chi(g))g\bigg)=h\varphi_{n,m}(\chi)\text{.}
\end{align*}
Thus the map is $G_n$-equivariant, and therefore is a $\Lambda$-module isomorphism.\\
\indent Now for the diagrams, beginning with the first. Going horizontally then vertically sends $\chi\in\widehat{\Lambda_{n,m}}$ to $\sum_{g^\prime\in G_{n+1}}[\pi^m](\chi(\Res_{n,m}(g^\prime))) g^\prime$. If instead we go vertically first, $\chi$ is sent to $\sum_{g\in G_n}[\pi^m](\chi(g))g$, and then applying $\Cor_{n,m}$ gives 
\begin{align*}
\sum_{g\in G_n}[\pi^m](\chi(g))\Cor_{n,m}(g)&=
\sum_{g\in G_n}[\pi^m](\chi(g))\Bigg(\sum_{g^\prime\in G_{n+1},\Res_{n,m}(g^\prime)=g}g^\prime\Bigg)\\
&\sum_{g\in G_n}\Bigg(\sum_{g^\prime\in G_{n+1},\Res_{n,m}(g^\prime)=g}[\pi^m](\chi(\Res_{n,m}(g^\prime)))g^\prime\Bigg)\\
&=\sum_{g^\prime\in G_{n+1}}[\pi^m](\chi(\Res_{n,m}(g^\prime))) g^\prime\text{.}
\end{align*}
Thus the first diagram commutes.\\
\indent For the second diagram, given $\chi\in\widehat{\Lambda_{n+1,m}}$ and going horizontally, we get $\chi\circ\Cor_{n,m}$, and then going vertically gives
\begin{equation*}
\sum_{g\in G_n}[\pi^m](\chi(\Cor_{n,m}(g)))g\text{.}
\end{equation*}
Going vertically first gives $\sum_{g^\prime\in G_{n+1}}[\pi^m](\chi(g^\prime))g^\prime$, and then taking $\Res_{n,m}$, we obtain
\begin{equation*}
\sum_{g\in G_n}\Bigg(\sum_{g^\prime\in G_{n+1},\Res_{n,m}(g^\prime)=g}[\pi^m](\chi(g^\prime))\Bigg) g\text{.}
\end{equation*}
The definition of $\Cor_{n,m}$ gives
\begin{equation*}
\Cor_{n,m}(g)=\sum_{g^\prime\in G_{n+1},\Res_{n,m}(g^\prime)=g}g^\prime\text{,}
\end{equation*}
so $[\pi^m](\chi(\Cor_{n,m}(g)))=\sum_{g^\prime\in G_{n+1},\Res_{n,m}(g^\prime)=g}[\pi^m](\chi(g^\prime))$, which finishes the proof.
\end{proof}
\begin{cor}
\label{discrete-lim}
If $S_m=\varinjlim_n\Lambda_{n,m}$, with the limit taken with respect to the corestriction maps, then there is a canonical isomorphism of $\Lambda$-modules $\widehat{S_m}\simeq(\mathscr{O}/\pi^m\mathscr{O})[[\Gamma]]\simeq\Lambda/\pi^m\Lambda$. In particular, $S_m$ is a cofinitely generated, cotorsion $\Lambda$-module with $\lambda$-invariant zero and $\mu$-invariant $m$.
\end{cor}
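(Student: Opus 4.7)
The plan is to apply Pontryagin duality to convert the direct limit into an inverse limit, then use \cref{self-dual} to identify the resulting inverse system with the standard one defining $\Lambda$. Concretely, since Pontryagin duality is an anti-equivalence between cofinitely generated discrete $\mathscr{O}$-torsion $\Lambda$-modules and finitely generated $\Lambda$-modules (and behaves well for arbitrary colimits of discrete torsion modules, since $K/\mathscr{O}$ is an injective cogenerator of the category of discrete $\mathscr{O}$-torsion modules), dualization converts $\varinjlim$ into $\varprojlim$. The corestriction transition maps $\Cor_{n,m}:\Lambda_{n,m}\to\Lambda_{n+1,m}$ are sent, via the identifications $\varphi_{n,m}$ of \cref{self-dual}, to the restriction maps $\Res_{n,m}:\Lambda_{n+1,m}\to\Lambda_{n,m}$.

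Consequently, I would obtain a canonical $\Lambda$-module isomorphism
\begin{equation*}
\widehat{S_m}=\widehat{\varinjlim_n \Lambda_{n,m}}\simeq\varprojlim_n\widehat{\Lambda_{n,m}}\simeq\varprojlim_n\Lambda_{n,m}\text{,}
\end{equation*}
where the final inverse limit is taken with respect to the restriction maps. By the very definition of the completed group ring with coefficients in $\mathscr{O}/\pi^m\mathscr{O}$, this last inverse limit is $(\mathscr{O}/\pi^m\mathscr{O})[[\Gamma]]$, and the canonical surjection $\Lambda=\mathscr{O}[[\Gamma]]\twoheadrightarrow(\mathscr{O}/\pi^m\mathscr{O})[[\Gamma]]$ has kernel $\pi^m\Lambda$ (one sees this most easily by passing to the inverse limit of the short exact sequences $0\to\pi^m\Lambda_n\to\Lambda_n\to\Lambda_{n,m}\to 0$, which remain exact because each is a sequence of finite $\mathscr{O}$-modules, so the inverse system $\{\pi^m\Lambda_n\}$ satisfies Mittag-Leffler). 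This yields the desired canonical isomorphism $\widehat{S_m}\simeq\Lambda/\pi^m\Lambda$.

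With the dual identified, the structural claims are immediate from \cref{lambda-structure-thm} and \cref{iwasawa-inv-defn}: $\Lambda/\pi^m\Lambda$ is a cyclic, hence finitely generated, $\Lambda$-module (so $S_m$ is cofinitely generated), it is $\Lambda$-torsion (so $S_m$ is cotorsion), and it is already in the normal form of the structure theorem with $r=0$, $t=1$, $m_1=m$, $s=0$. Thus $\mu(S_m)=\mu(\Lambda/\pi^m\Lambda)=m$ and $\lambda(S_m)=\lambda(\Lambda/\pi^m\Lambda)=0$.

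The only point requiring any care is the commutation of Pontryagin duality with the direct limit and the identification $\varprojlim_n\Lambda_{n,m}\simeq\Lambda/\pi^m\Lambda$; both are standard but worth stating explicitly, and they are essentially the entire content of the argument once \cref{self-dual} is in hand. I do not anticipate any genuine obstacle beyond this bookkeeping.
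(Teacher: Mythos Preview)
Your proposal is correct and follows essentially the same argument as the paper: dualize the direct limit to an inverse limit, use \cref{self-dual} to convert $\widehat{\Cor_{n,m}}$ into $\Res_{n,m}$, recognize the resulting inverse limit as $(\mathscr{O}/\pi^m\mathscr{O})[[\Gamma]]\simeq\Lambda/\pi^m\Lambda$, and read off the Iwasawa invariants. Your extra remarks on Mittag-Leffler and the structure theorem are fine elaborations but do not change the approach.
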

\begin{proof}
As $S_m$ is a discrete $\mathscr{O}$-torsion $\Lambda$-module, by standard results on Pontryagin duality, $\widehat{S_m}$ is canonically isomorphic as a $\Lambda$-module to $\varprojlim_n\widehat{\Lambda_{n,m}}$, with the limit taken with respect to the maps $\widehat{\Cor_{n,m}}$. By \cref{self-dual}, this can be identified with the inverse limit of the $\Lambda_{n,m}$ taken with respect to the restriction maps, i.e., with $(\mathscr{O}/\pi^m\mathscr{O})[[\Gamma]]$. The second isomorphism is the inverse of the isomorphism given by passage to the quotient of the natural map $\Lambda\rightarrow(\mathscr{O}/\pi^m\mathscr{O})[[\Gamma]]$ (the kernel of the latter surjection is $\pi^m\Lambda$).
\end{proof}
\indent Now, instead of taking limits over $n\geq 0$, we want to take limits over $m\geq 1$. We will consider the discrete $\mathscr{O}$-torsion $\Lambda$-modules $S_n=\varinjlim_m(\mathscr{O}/\pi^m)[G_n]$, where the transition maps $[\pi]_{n,m}:\Lambda_{n,m}\rightarrow\Lambda_{n,m+1}$ are induced by the injective $\mathscr{O}$-module maps $\mathscr{O}/\pi^m\mathscr{O}\to\mathscr{O}/\pi^{m+1}\mathscr{O}$ given by multiplication by $\pi$.
\begin{prop}
\label{mult-by-pi-lim}
Under the isomorphisms $\varphi_{n,m}:\widehat{\Lambda_{n,m}}\simeq\Lambda_{n,m}$, $\widehat{[\pi]_{n,m}}=\theta_{n,m}$, where $\theta_{n,m}:\Lambda_{n,m+1}\rightarrow\Lambda_{n,m}$ is induced by the natural $\mathscr{O}$-module map $\beta_m:\mathscr{O}/\pi^{m+1}\mathscr{O}\rightarrow\mathscr{O}/\pi^m\mathscr{O}$. More precisely, $\theta_{n,m}\circ\varphi_{n,m+1}=\varphi_{n,m}\circ\widehat{[\pi]_{n,m}}$.
\end{prop}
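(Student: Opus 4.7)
The plan is to verify the stated commutativity by chasing a general element $\chi\in\widehat{\Lambda_{n,m+1}}$ through both paths of the diagram, using the explicit formula for $\varphi_{n,m}$ established in the proof of \cref{self-dual}. First I would unwind the dual map: $[\pi]_{n,m}$ sends $g\in G_n$ to $\pi g$ (viewed inside $\Lambda_{n,m+1}$), so $\widehat{[\pi]_{n,m}}(\chi)(g)=\chi(\pi g)=\pi\chi(g)$ for every $g$, the second equality being $\mathscr{O}$-linearity of $\chi$. Note also that $\chi(g)$ lies in $\pi^{-m-1}\mathscr{O}/\mathscr{O}$ (since $\Lambda_{n,m+1}$ is $\pi^{m+1}$-torsion), so $\pi\chi(g)$ lies in $\pi^{-m}\mathscr{O}/\mathscr{O}$, which is exactly the domain on which $[\pi^m]$ is defined.

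Next I would compute both composites. Applying $\widehat{[\pi]_{n,m}}$ and then $\varphi_{n,m}$ yields
\begin{equation*}
\varphi_{n,m}\bigl(\widehat{[\pi]_{n,m}}(\chi)\bigr)=\sum_{g\in G_n}[\pi^m]\bigl(\pi\chi(g)\bigr)\,g,
\end{equation*}
while applying $\varphi_{n,m+1}$ and then $\theta_{n,m}$ yields
\begin{equation*}
\theta_{n,m}\bigl(\varphi_{n,m+1}(\chi)\bigr)=\sum_{g\in G_n}\beta_m\bigl([\pi^{m+1}](\chi(g))\bigr)\,g.
\end{equation*}
So the entire claim reduces to verifying the pointwise identity $[\pi^m](\pi a)=\beta_m\bigl([\pi^{m+1}](a)\bigr)$ for $a\in\pi^{-m-1}\mathscr{O}/\mathscr{O}$. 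Choosing a lift $u\in\mathscr{O}$ so that $a\equiv u/\pi^{m+1}\pmod{\mathscr{O}}$, both sides evaluate to the class of $u$ in $\mathscr{O}/\pi^m\mathscr{O}$, and the identity is immediate.

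The only subtlety is purely notational: one must keep straight that $\varphi_{n,m+1}$ uses the identification $\pi^{-m-1}\mathscr{O}/\mathscr{O}\simeq\mathscr{O}/\pi^{m+1}\mathscr{O}$ whereas $\varphi_{n,m}$ uses $\pi^{-m}\mathscr{O}/\mathscr{O}\simeq\mathscr{O}/\pi^{m}\mathscr{O}$, and that multiplication by $\pi$ on the source of $\chi$ intertwines these identifications compatibly with reduction mod $\pi^m$ on the coefficient ring. Once that compatibility is isolated as the displayed scalar identity, there is no further work; $\Gamma$-equivariance is automatic because all maps in sight are $\mathscr{O}$-linear and fix $G_n$ pointwise.
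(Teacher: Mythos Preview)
Your proof is correct and follows essentially the same approach as the paper's: both compute the two composites on a general $\chi\in\widehat{\Lambda_{n,m+1}}$ and reduce to the scalar identity $[\pi^m](\pi a)=\beta_m([\pi^{m+1}](a))$ for $a\in\pi^{-m-1}\mathscr{O}/\mathscr{O}$, verified by choosing a lift. Your use of $\mathscr{O}$-linearity to write $\chi([\pi]_{n,m}(g))=\pi\chi(g)$ is a mild streamlining of the paper's explicit unwinding, and your lift $u\in\mathscr{O}$ with $a\equiv u/\pi^{m+1}$ corresponds to the paper's $r\in\pi^{-m-1}\mathscr{O}$ via $r=u/\pi^{m+1}$.
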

\begin{proof}
The assertion is that the diagram
\begin{equation*}
\xymatrix{\widehat{\Lambda_{n,m+1}} \ar[r]^{\widehat{[\pi]_{n,m}}} \ar[d]_{\varphi_{n,m+1}} & \widehat{\Lambda_{n,m}}\ar[d]^{\varphi_{n,m}}\\
\Lambda_{n,m+1} \ar[r]_{\theta_{n,m}} & \Lambda_{n,m}}
\end{equation*}
commutes. Beginning with $\chi\in\widehat{\Lambda_{n,m+1}}$, going along the top horizontal map gives $\chi\circ[\pi]_{n,m}$, and then traveling vertically gives $\sum_{g\in G_n}[\pi^m](\chi([\pi]_{n,m}(g))) g$. Going the other way gives $\sum_{g\in G_n}\beta_m([\pi^{m+1}](\chi(g)))g$. To see that these coincide, fix $g\in G_n$, and let $r\in\pi^{-m-1}\mathscr{O}$ represent $\chi(g)\in\pi^{-m-1}\mathscr{O}/\mathscr{O}$. Then $\chi([\pi]_{n,m}(g))=\chi((\pi+\pi^{m+1}\mathscr{O})g)=\pi r+\mathscr{O}\in\pi^{-m}\mathscr{O}/\mathscr{O}$, so $[\pi^m](\chi([\pi]_{n,m}(g)))=\pi^{m+1}r+\pi^m\mathscr{O}$; since $[\pi^{m+1}](\chi(g))=\pi^{m+1}r+\pi^{m+1}\mathscr{O}$, this is exactly $\beta_m([\pi^{m+1}](\chi(g)))$. Thus the diagram commutes.
\end{proof}
\begin{cor}
\label{discrete-lim-coefficient}
There is a canonical $\Lambda$-module isomorphism $\widehat{S_n}\simeq\mathscr{O}[G_n]$.
\end{cor}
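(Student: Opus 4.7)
The plan is to mirror the strategy used for \cref{discrete-lim}, but now with the direct system indexed by $m$ (with transition maps $[\pi]_{n,m}$) rather than by $n$ (with transition maps $\Cor_{n,m}$). First, since $S_n$ is a discrete $\mathscr{O}$-torsion $\Lambda$-module obtained as a direct limit, the standard compatibility of Pontryagin duality with limits yields a canonical $\Lambda$-module isomorphism
\begin{equation*}
\widehat{S_n}\simeq\varprojlim_m\widehat{\Lambda_{n,m}}\text{,}
\end{equation*}
where the transition maps in the inverse system are the Pontryagin duals $\widehat{[\pi]_{n,m}}:\widehat{\Lambda_{n,m+1}}\to\widehat{\Lambda_{n,m}}$.

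Next, I would apply \cref{mult-by-pi-lim}: the family of isomorphisms $\varphi_{n,m}:\widehat{\Lambda_{n,m}}\to\Lambda_{n,m}$ transports $\widehat{[\pi]_{n,m}}$ to $\theta_{n,m}$, so assembling them gives a $\Lambda$-module isomorphism
\begin{equation*}
\varprojlim_m\widehat{\Lambda_{n,m}}\xrightarrow{\sim}\varprojlim_m\Lambda_{n,m}\text{,}
\end{equation*}
where the right-hand limit is taken with respect to the maps $\theta_{n,m}$ induced by the canonical projections $\beta_m:\mathscr{O}/\pi^{m+1}\mathscr{O}\to\mathscr{O}/\pi^m\mathscr{O}$.

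Finally, because $G_n$ is \emph{finite}, the functor $-\otimes_\mathscr{O}\mathscr{O}[G_n]$ (equivalently, taking free modules on $G_n$) commutes with the inverse limit over $m$, so
\begin{equation*}
\varprojlim_m\Lambda_{n,m}=\varprojlim_m(\mathscr{O}/\pi^m\mathscr{O})[G_n]\simeq\Bigl(\varprojlim_m\mathscr{O}/\pi^m\mathscr{O}\Bigr)[G_n]=\mathscr{O}[G_n]\text{,}
\end{equation*}
an isomorphism of $\Lambda$-modules since the $\Gamma$-action on each side factors through $G_n$ compatibly. Composing the three isomorphisms yields the desired canonical $\Lambda$-module identification $\widehat{S_n}\simeq\mathscr{O}[G_n]$.

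There is really no conceptual obstacle here, as all the work has been done in \cref{self-dual} and \cref{mult-by-pi-lim}; the only point one must be careful about is checking that the combined map is genuinely $\Lambda$-linear (which follows from naturality, since each $\varphi_{n,m}$ is $\Lambda$-linear and the transition maps on both sides are $\Lambda$-module maps) and that the final step, pulling the $[G_n]$ outside the inverse limit, is permissible (which it is precisely because $G_n$ is finite, so $\mathscr{O}[G_n]$ is a finite free $\mathscr{O}$-module and inverse limits commute with finite direct sums).
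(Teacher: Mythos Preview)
Your proof is correct and follows essentially the same approach as the paper's: Pontryagin duality turns the direct limit into an inverse limit, \cref{mult-by-pi-lim} identifies the transition maps with the $\theta_{n,m}$, and the final step computes $\varprojlim_m(\mathscr{O}/\pi^m\mathscr{O})[G_n]\simeq\mathscr{O}[G_n]$. The only cosmetic difference is that the paper phrases the last step as $\pi$-adic completeness of the finite $\mathscr{O}$-module $\mathscr{O}[G_n]$, whereas you phrase it as commuting the inverse limit with the finite direct sum indexed by $G_n$; these are equivalent justifications.
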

\begin{proof}
By \cref{mult-by-pi-lim} and Pontryagin duality, $\widehat{S_n}\simeq\varprojlim_m\Lambda_{n,m}$, where the limit is taken with respect to the maps given on coefficients by $\beta_m:\mathscr{O}/\pi^{m+1}\mathscr{O}\rightarrow\mathscr{O}/\pi^m\mathscr{O}$. These can be identified with the natural maps $\mathscr{O}[G_n]/(\pi^{m+1})\rightarrow\mathscr{O}[G_n]/(\pi^m)$, and taking the inverse limit of this system of modules produces $\mathscr{O}[G_n]$ because $\mathscr{O}[G_n]$ is a finite, hence $\pi$-adically complete $\mathscr{O}$-module.
\end{proof}
\indent Finally, we want to identify the Pontryagin dual of the discrete $\mathscr{O}$-torsion $\Lambda$-module 
\begin{equation*}
\mathscr{S}=\varinjlim_n S_n\text{,}
\end{equation*}
where the transition maps are $\psi_n=\varinjlim_m\Cor_{n,m}:\varinjlim_m\Lambda_{n,m}\rightarrow\varinjlim_m\Lambda_{n+1,m}$. This makes sense because the corestriction maps commute with the transition maps defining $S_n$ and $S_{n+1}$.
\begin{cor}
\label{double-lim}
There is a canonical $\Lambda$-module isomorphism $\widehat{\mathscr{S}}\simeq\Lambda$.
\end{cor}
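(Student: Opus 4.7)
The plan is to combine the two preceding corollaries by exchanging limits via Pontryagin duality. Since $\mathscr{S} = \varinjlim_n S_n$ is a directed limit of discrete $\mathscr{O}$-torsion $\Lambda$-modules, Pontryagin duality gives a canonical $\Lambda$-module isomorphism
\begin{equation*}
\widehat{\mathscr{S}} \simeq \varprojlim_n \widehat{S_n},
\end{equation*}
where the transition maps on the right are the duals $\widehat{\psi_n}$ of the corestriction-induced maps $\psi_n = \varinjlim_m \Cor_{n,m}$. By \cref{discrete-lim-coefficient}, each $\widehat{S_n}$ is canonically isomorphic to $\Lambda_n = \mathscr{O}[G_n]$, so once we identify the transition maps $\widehat{\psi_n}$ on the right-hand side with the restriction maps $\Res_n : \Lambda_{n+1} \to \Lambda_n$, we will obtain
\begin{equation*}
\widehat{\mathscr{S}} \simeq \varprojlim_n \Lambda_n = \Lambda,
\end{equation*}
which is the desired conclusion by definition of $\Lambda$ as the completed group ring.

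The crucial step is thus to verify the compatibility $\widehat{\psi_n} = \Res_n$ under the isomorphisms of \cref{discrete-lim-coefficient}. First, since Pontryagin duality converts the directed limit defining $S_n = \varinjlim_m \Lambda_{n,m}$ into an inverse limit, \cref{mult-by-pi-lim} identifies $\widehat{S_n}$ with $\varprojlim_m \Lambda_{n,m}$ taken with respect to the reduction maps $\theta_{n,m}$, and this inverse limit is $\Lambda_n$ by $\pi$-adic completeness of the finite $\mathscr{O}$-module $\mathscr{O}[G_n]$. Next, \cref{self-dual} tells us that under the family of isomorphisms $\varphi_{n,m}$, the dual of each $\Cor_{n,m}$ is exactly $\Res_{n,m}$. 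Since both $\varphi_{n,m}$ and $\varphi_{n+1,m}$ are compatible with $\theta_{n,m}$ on one side and $\widehat{[\pi]_{n,m}}$ on the other (by \cref{mult-by-pi-lim}), the family $(\varphi_{n,m})_m$ assembles into a $\Lambda$-isomorphism $\widehat{S_n} \simeq \Lambda_n$ which intertwines $\widehat{\psi_n}$ with $\varprojlim_m \Res_{n,m} = \Res_n$.

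The main (minor) obstacle is purely bookkeeping: one must check that the two interchanges of limits compose consistently, i.e. that the bi-indexed system $(\Lambda_{n,m})_{n,m}$ with its four kinds of transition maps ($\Cor_{n,m}$, $[\pi]_{n,m}$ on the discrete side; $\Res_{n,m}$, $\theta_{n,m}$ on the profinite side) really does give rise, under duality, to the double inverse limit $\varprojlim_n \varprojlim_m \Lambda_{n,m} = \Lambda$. Once the compatibility diagrams of \cref{self-dual} and \cref{mult-by-pi-lim} are cited, this is formal. Assembling these pieces yields a canonical $\Lambda$-module isomorphism
\begin{equation*}
\widehat{\mathscr{S}} \simeq \varprojlim_n \widehat{S_n} \simeq \varprojlim_n \Lambda_n \simeq \Lambda,
\end{equation*}
completing the proof.
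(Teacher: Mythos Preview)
Your proposal is correct and follows essentially the same route as the paper: dualize the direct limit to an inverse limit, use the self-duality isomorphisms $\varphi_{n,m}$ together with \cref{self-dual} and \cref{mult-by-pi-lim} to identify the dual transition maps with restriction, and conclude that $\widehat{\mathscr{S}}\simeq\varprojlim_n\mathscr{O}[G_n]=\Lambda$. If anything, you are slightly more careful than the paper in citing \cref{mult-by-pi-lim} for the identification $\widehat{S_n}\simeq\varprojlim_m\Lambda_{n,m}$ (the paper cites only \cref{self-dual} there, though \cref{mult-by-pi-lim} is what handles the $m$-direction transition maps).
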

\begin{proof}
As before, we have $\widehat{\mathscr{S}}\simeq\varprojlim_n\widehat{S_n}$, where the limit is taken with respect to the maps $\widehat{\psi_n}$. Under the isomorphism $\widehat{S_n}\simeq\varprojlim_m\Lambda_{n,m}$ coming from Pontryagin duality and \cref{self-dual}, the transition maps $\widehat{\psi_n}=\widehat{\varinjlim_m\Cor_{n,m}}$ for the modules $\widehat{S_n}$ become $\varprojlim_m\Res_{n,m}$ (again by \cref{self-dual}). That is, the diagram
\begin{equation*}
\xymatrix{\widehat{S_{n+1}}\ar[d]_\simeq  \ar[rr]^{\widehat{\psi_n}} & &\widehat{S_n}\ar[d]^\simeq\\
\varprojlim_m\Lambda_{n+1,m} \ar[rr]_{\varprojlim_m\Res_{n,m}} & &\varprojlim_m\Lambda_{n,m}}
\end{equation*}
commutes. As alluded to in the proof of \cref{discrete-lim-coefficient}, the inverse system consisting of the modules $\varprojlim_m\Lambda_{n,m}$ and the transition maps $\varprojlim_m\Res_{n,m}$ can be identified with the inverse system consisting of the modules $\mathscr{O}[G_n]$ and the transition maps $\Res_n:\mathscr{O}[G_{n+1}]\rightarrow\mathscr{O}[G_n]$. Thus $\widehat{\mathscr{S}}$ can be identified with $\varprojlim_n\mathscr{O}[G_n]=\Lambda$.
\end{proof}
\begin{cor}
\label{tensor-module}
Let $M$ be a cofinitely generated $\mathscr{O}$-module, $M\simeq (K/\mathscr{O})^r\oplus\sum_{i=1}^t\mathscr{O}/\pi^{m_i}\mathscr{O}$. Then there is a $\Lambda$-module isomorphism $\varinjlim_n M\otimes_\mathscr{O}\mathscr{O}[G_n]\simeq\widehat{\Lambda}^r\oplus\sum_{i=1}^t\widehat{\Lambda/\pi^{m_i}\Lambda}$, where the limit in the source is taken with respect to the maps $\id_M\otimes\Cor_n$ and $\Lambda$ acts on the right tensor factor of each $M\otimes_\mathscr{O}\mathscr{O}[G_n]$. In particular, $\varinjlim_n M\otimes_\mathscr{O}\mathscr{O}[G_n]$ is a cofinitely generated $\Lambda$-module with corank $\corank_\mathscr{O}(M)$, $\lambda$-invariant zero, and $\mu$-invariant $\sum_{i=1}^tm_i$.
\end{cor}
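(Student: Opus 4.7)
The plan is to reduce to two elementary cases by decomposing $M$ and then identifying each summand of the resulting direct limit with a module whose Pontryagin dual has already been computed. Since tensor product over $\mathscr{O}$ commutes with finite direct sums and direct limits commute with direct sums, the given decomposition of $M$ yields
\begin{equation*}
\varinjlim_n M\otimes_\mathscr{O}\mathscr{O}[G_n]\simeq\Bigg(\varinjlim_n (K/\mathscr{O})\otimes_\mathscr{O}\mathscr{O}[G_n]\Bigg)^r\oplus\sum_{i=1}^t\varinjlim_n(\mathscr{O}/\pi^{m_i}\mathscr{O})\otimes_\mathscr{O}\mathscr{O}[G_n]
\end{equation*}
as $\Lambda$-modules, where the transition maps are induced by the $\Cor_n$. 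It therefore suffices to handle the summands $M=K/\mathscr{O}$ and $M=\mathscr{O}/\pi^m\mathscr{O}$ separately.

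First I would treat $M=\mathscr{O}/\pi^m\mathscr{O}$. There is a canonical $\mathscr{O}[G_n]$-linear identification $(\mathscr{O}/\pi^m\mathscr{O})\otimes_\mathscr{O}\mathscr{O}[G_n]\simeq\Lambda_{n,m}$, and under this identification the transition map $\id\otimes\Cor_n$ becomes $\Cor_{n,m}$. Passage to the limit therefore yields $S_m$ in the notation preceding \cref{discrete-lim}, and that corollary already identifies $\widehat{S_m}\simeq\Lambda/\pi^m\Lambda$. Applying Pontryagin double duality to the cofinitely generated discrete $\mathscr{O}$-torsion $\Lambda$-module $S_m$ gives $S_m\simeq\widehat{\Lambda/\pi^m\Lambda}$, as desired.

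Next I would treat $M=K/\mathscr{O}$. Writing $K/\mathscr{O}=\varinjlim_m\mathscr{O}/\pi^m\mathscr{O}$ (with transition maps $[\pi]$) and using that tensor product commutes with direct limits, I get
\begin{equation*}
(K/\mathscr{O})\otimes_\mathscr{O}\mathscr{O}[G_n]\simeq\varinjlim_m(\mathscr{O}/\pi^m\mathscr{O})[G_n]=\varinjlim_m\Lambda_{n,m}=S_n
\end{equation*}
in the notation preceding \cref{discrete-lim-coefficient}, and interchanging the two direct limits identifies the transition map $\id\otimes\Cor_n$ with $\psi_n=\varinjlim_m\Cor_{n,m}$. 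Thus $\varinjlim_n(K/\mathscr{O})\otimes_\mathscr{O}\mathscr{O}[G_n]\simeq\varinjlim_n S_n=\mathscr{S}$, and by \cref{double-lim} and Pontryagin double duality, this is $\widehat{\Lambda}$. Combining the two cases yields the claimed isomorphism.

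Finally, the Iwasawa invariants of $\varinjlim_n M\otimes_\mathscr{O}\mathscr{O}[G_n]$ are read off summand by summand from the structural description: each copy of $\widehat{\Lambda}$ contributes $1$ to the $\Lambda$-corank and nothing to $\mu$ or $\lambda$ (since $\Lambda$ is $\Lambda$-free of rank one with $\mu=\lambda=0$), while each summand $\widehat{\Lambda/\pi^{m_i}\Lambda}$ contributes $m_i$ to the $\mu$-invariant and nothing to the $\Lambda$-corank or to $\lambda$. This gives $\Lambda$-corank $r=\corank_\mathscr{O}(M)$, $\mu$-invariant $\sum_{i=1}^t m_i$, and $\lambda$-invariant zero. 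The only step requiring any real care, rather than formal manipulation, is checking that interchanging the two direct limits in the $K/\mathscr{O}$ case indeed converts $\id\otimes\Cor_n$ into the map $\psi_n$ appearing in the construction of $\mathscr{S}$; everything else is a bookkeeping exercise combining the previous corollaries with Pontryagin duality.
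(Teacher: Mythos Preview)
Your proposal is correct and follows essentially the same approach as the paper's proof: decompose $M$, use commutativity of tensor products and direct limits with finite direct sums, identify the $\mathscr{O}/\pi^m\mathscr{O}$ summands with $S_m$ via \cref{discrete-lim}, and identify the $K/\mathscr{O}$ summands with $\mathscr{S}$ via \cref{double-lim} after writing $K/\mathscr{O}=\varinjlim_m\mathscr{O}/\pi^m\mathscr{O}$. The only cosmetic difference is the order in which the two cases are treated.
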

\begin{proof}
For each $n\geq 0$, we have a canonical isomorphism of $\Lambda$-modules
\begin{equation*}
\big((K/\mathscr{O})^r\oplus\sum_{i=1}^t\mathscr{O}/\pi^{m_i}\mathscr{O}\big)\otimes_\mathscr{O}\mathscr{O}[G_n]
\simeq(K/\mathscr{O}\otimes_\mathscr{O}\mathscr{O}[G_n])^r\oplus\sum_{i=1}^t\Lambda_{n,m_i}\text{.}
\end{equation*}
These isomorphisms are compatible with the natural transition maps on both source and target as $n$ varies (all coming from corestriction), and since direct limits commute with $\otimes_\mathscr{O}$ and finite direct sums, in the limit over $n$ we obtain
\begin{equation*}
(\varinjlim_n K/\mathscr{O}\otimes\mathscr{O}[G_n])^r\oplus\sum_{i=1}^t\varinjlim_n(\Lambda_{n,m_i})\text{.}
\end{equation*}
For the factor on the right, Pontryagin duality together with \cref{discrete-lim} shows that the limit is $\sum_{i=1}^t\widehat{\Lambda/\pi^{m_i}\Lambda}$. For the left factor, we have, for each $n$,
\begin{equation*}
K/\mathscr{O}\otimes\mathscr{O}[G_n]\simeq\varinjlim_m\Lambda_{n,m}=S_n\text{,}
\end{equation*}
where we use that $K/\mathscr{O}=\varinjlim_m\pi^{-m}\mathscr{O}/\mathscr{O}\simeq\varinjlim\mathscr{O}/\pi^m\mathscr{O}$, the second limit over $m\geq 1$ taken via the injections induced by multiplication by $\pi$. As $n$ varies, under these identifications, the transition maps for the modules $K/\mathscr{O}\otimes_\mathscr{O}[G_n]$ coming from corestriction become the transition maps $\psi_n:S_n\rightarrow S_{n+1}$ used to define the module $\mathscr{S}$ of \cref{double-lim}. That corollary, along with another application of Pontryagin duality, shows that, upon taking the limit, we obtain $\widehat{\Lambda}$.
\end{proof}
\section{Twisting of $\Lambda$-modules and characteristic polynomials}
\label{app-b}
\indent In this appendix we analyze the effect of twisting by a character on the characteristic polynomial of a finitely generated torsion $\Lambda$-module (see \cref{char-poly-defn} below). We retain the notation of \cref{app-a}. Let $q=p$ if $p$ is odd and $q=4$ if $p=2$, and let $\kappa:\Gamma\rightarrow 1+q\mathbf{Z}_p$ be a continuous character. Since the source and target of $\kappa$ are isomorphic to $\mathbf{Z}_p$, $\kappa$ is either trivial or injective, and in the latter case, it induces an isomorphism of $\Gamma$ onto its necessarily open image.\\
\indent Once we fix a topological generator $\gamma$ of $\Gamma$ and identify $\Lambda$ with $\mathscr{O}[[T]]$ via $\gamma\mapsto 1+T$ as in \cite[Proposition 5.3.5]{NSW}, we can associate to $\kappa$ a continuous $\mathscr{O}$-algebra endomorphism $\varphi_\kappa$ of $\Lambda$, determined uniquely by the requirement that $\varphi_\kappa(T)=\kappa(\gamma)(1+T)-1$. This is valid because $\kappa(\gamma)$ is a principal unit, and thus $\kappa(\gamma)(1+T)-1$ lies in the unique maximal ideal of $\Lambda$. The map $\varphi_\kappa$ is an automorphism because $\psi=\varphi_{\kappa^{-1}}$ satisfies $(\psi\circ\varphi)(T)=T=(\varphi\circ\psi)(T)$, and the only continuous $\mathscr{O}$-algebra endomorphism of $\Lambda$ fixing $T$ is the identity.\\
\indent Now, if $X$ is any $\Lambda$-module, we define a new $\Lambda$-module $X(\kappa)$ whose underlying $\mathscr{O}$-module is $X$, but with $\Lambda$-action twisted by $\varphi_\kappa$, i.e., we define $\lambda\cdot x=\varphi_\kappa(\lambda)x$ for $\lambda\in\Lambda$ and $x\in X$. We call $X(\kappa)$ the \emph{twist} of $X$ by $\kappa$. Since $\varphi_\kappa$ is an automorphism of $\Lambda$, it is clear that $X$ is finitely generated (respectively torsion) if and only if $X(\kappa)$ is. In particular, if $X$ is finitely generated and torsion, then so is $X(\kappa)$. In this case, we can consider the characteristic polynomials, in the sense of the following definition, for all twists of $X$.

\begin{defn}
\label{char-poly-defn}
Let $X$ be a finitely generated torsion $\Lambda$-module and set $X_K=X\otimes_\mathscr{O}K$, a finite-dimensional $K$-vector space with an action of $\mathscr{O}[[T]]$ via the chosen isomorphism $\Lambda\simeq\mathscr{O}[[T]]$. Then the \emph{characteristic polynomial} of $X$ (with respect to the chosen topological generator $\gamma$ inducing the $\mathscr{O}$-algebra isomorphism $\Lambda\simeq\mathscr{O}[[T]]$) is the polynomial in $\mathscr{O}[t]$ defined by
\begin{equation*}
\pi^{\mu(X)}\det((t\id_{X_K}-T)\vert X_K)\text{,}
\end{equation*}
except in the case that $X_K=0$ (i.e. $X=X[\pi^\infty]$), where we simply define the characteristic polynomial to be $\pi^{\mu(X)}$.
\end{defn}
\indent The characteristic polynomial of a finitely generated torsion $\Lambda$-module $X$ is of degree $\lambda(X)$ and is monic precisely when $\mu(X)=0$. We wish to describe the effect that twisting by $\kappa$ has on the characteristic polynomial, i.e., to give a formula for the characteristic polynomial for $X(\kappa)$ in terms of the characteristic polynomial of $X$. We will assume vanishing of the $\mu$-invariant as this is the only case needed for our desired application.
\begin{prop}
\label{char-poly-twist}
Let $X$ be a finitely generated torsion $\Lambda$-module. Let $F(t)\in\mathscr{O}[t]$ be the characteristic polynomial of $X$, and assume $\mu(X)=0$. Then $\mu(X(\kappa))=0$ and the characteristic polynomial of $X(\kappa)$ is 
\begin{equation*}
\kappa(\gamma)^{\lambda(X)}F(\kappa(\gamma)^{-1}(1+t)-1)\text{.}
\end{equation*}
\end{prop}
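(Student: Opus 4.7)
The plan is to first verify that $\mu(X(\kappa)) = 0$, and then compute the characteristic polynomial of $X(\kappa)$ by a direct linear algebra manipulation on the $K$-vector space $X_K$, exploiting the fact that the underlying $\mathscr{O}$-module is unchanged by the twist.

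For the vanishing of $\mu(X(\kappa))$, I would use the structure theorem. Since $\mu(X) = 0$, there is a pseudo-isomorphism $X \to Y := \sum_{j=1}^s \Lambda/\mathfrak{p}_j^{e_j}$ with no elementary $\pi$-power summands, where each $\mathfrak{p}_j$ is a height one prime distinct from $\pi\Lambda$. Twisting by $\kappa$ is an exact functor that preserves finiteness, so pseudo-isomorphisms are sent to pseudo-isomorphisms, giving $X(\kappa) \sim Y(\kappa)$. A direct check shows that for any ideal $I \subseteq \Lambda$, the map $\Lambda \to (\Lambda/I)(\kappa)$ sending $\lambda$ to $[\varphi_\kappa(\lambda)]$ is a $\Lambda$-module surjection with kernel $\varphi_\kappa^{-1}(I)$, so $(\Lambda/I)(\kappa) \simeq \Lambda/\varphi_\kappa^{-1}(I)$. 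Since $\varphi_\kappa$ is an $\mathscr{O}$-algebra automorphism, it fixes $\pi\Lambda$, so $\varphi_\kappa^{-1}(\mathfrak{p}_j) \neq \pi\Lambda$ and is itself a height one prime. Hence $Y(\kappa) \simeq \sum_{j} \Lambda/\varphi_\kappa^{-1}(\mathfrak{p}_j)^{e_j}$ contains no $\pi$-power summands, so $\mu(X(\kappa)) = 0$ and $\lambda(X(\kappa)) = \lambda(X)$ (this last equality also following from the $K$-vector space identification below).

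For the characteristic polynomial, the point is that $X(\kappa)$ and $X$ have the same underlying $\mathscr{O}$-module, hence $X(\kappa)_K = X_K$ as $K$-vector spaces, of dimension $\lambda(X)$ (using $\mu(X) = 0$). Let $\tau \in \operatorname{End}_K(X_K)$ denote the action of $T$ coming from the original $\Lambda$-module structure on $X$, so $F(t) = \det(t\,\mathrm{id} - \tau \mid X_K)$. Under the twisted action, $T$ acts on $X(\kappa)_K$ as $\varphi_\kappa(T)$ acts on $X_K$ under the original action, namely as the operator $\tau' := \kappa(\gamma)(1+\tau) - 1$. Writing $d = \lambda(X)$ and computing,
\begin{align*}
\det(t\,\mathrm{id} - \tau' \mid X_K) &= \det\bigl((t+1)\,\mathrm{id} - \kappa(\gamma)(1+\tau)\bigr) \\
&= \kappa(\gamma)^{d}\det\bigl(\kappa(\gamma)^{-1}(t+1)\,\mathrm{id} - (1+\tau)\bigr)\\
&= \kappa(\gamma)^{d}\det\bigl((\kappa(\gamma)^{-1}(1+t) - 1)\,\mathrm{id} - \tau\bigr)\\
&= \kappa(\gamma)^{\lambda(X)}\, F\bigl(\kappa(\gamma)^{-1}(1+t) - 1\bigr).
\end{align*}
Since $\mu(X(\kappa)) = 0$, the characteristic polynomial of $X(\kappa)$ is exactly $\det(t\,\mathrm{id} - \tau' \mid X_K)$, giving the claim.

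The main subtlety I anticipate is the first step: one must be careful that twisting genuinely preserves $\mu = 0$, and in particular that a pseudo-isomorphism $X \to Y$ of $\Lambda$-modules yields a pseudo-isomorphism after twisting. This is immediate once one observes that the $\mathscr{O}$-module structure (and, in particular, finiteness of $\mathscr{O}$-modules) is untouched by the twist, so kernel and cokernel of the twisted map coincide with those of the original map as sets. The determinant calculation is then routine.
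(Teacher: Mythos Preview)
Your proof is correct, and the determinant computation in the second half is essentially identical to the paper's (the paper works with a matrix $[T]$ in a chosen basis, you with the operator $\tau$, but the algebraic manipulation is the same line for line).

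The only genuine difference is in how you establish $\mu(X(\kappa))=0$. You invoke the structure theorem, check that twisting preserves pseudo-isomorphisms, and compute the twist of each elementary summand $\Lambda/\mathfrak{p}_j^{e_j}$. This is correct but more than is needed. The paper observes simply that $\mu(X)=0$ is equivalent to $X[\pi^\infty]$ being finite, and since $X$ and $X(\kappa)$ share the same underlying $\mathscr{O}$-module, $X(\kappa)[\pi^\infty]=X[\pi^\infty]$ is finite too, so $\mu(X(\kappa))=0$. Your own closing paragraph already hints at the key fact (that finiteness is untouched by the twist), so you could have bypassed the structure theorem entirely. On the other hand, your computation of $(\Lambda/I)(\kappa)\simeq\Lambda/\varphi_\kappa^{-1}(I)$ is a useful general fact that makes the effect of twisting on the full structure-theorem decomposition transparent, which the paper does not spell out.
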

\begin{proof}
The vanishing of $\mu(X)$ is equivalent to finiteness of $X[\pi^\infty]$ (essentially by definition of the $\mu$-invariant). Since $X$ and $X(\kappa)$ have the same underlying $\mathscr{O}$-module, it follows that $\mu(X(\kappa))=0$ as well. If $X\otimes_\mathscr{O}K=0$, then $X(\kappa)\otimes_\mathscr{O} K=0$, and both characteristic polynomials are equal to $1$, which is consistent with the formula in the statement of the proposition. Assume then that $X\otimes_\mathscr{O}K\neq 0$ and let $x_1,\ldots,x_d\in X$ be elements whose images in $X/X[\pi^\infty]$ form an $\mathscr{O}$-basis (so $d=\rank_\mathscr{O}(X)=\lambda(X)$). Then $x_1\otimes 1,\ldots,x_d\otimes 1\in X\otimes_\mathscr{O}K$ form a $K$-basis, and because $\mu(X)=0$, if $[T]$ is the matrix for the endomorphism $T$ of $X\otimes_\mathscr{O}K$ with respect to the chosen basis, then $F(t)=\det(It-[T])$, where $I$ is the $d\times d$ identity matrix over $K$. Since $X(\kappa)\otimes_\mathscr{O}K$ has the same underlying $K$-vector space as $X\otimes_\mathscr{O}K$, the $x_i\otimes 1$ constitute a $K$-basis for this space as well. By definition, the action of $T$ on $X(\kappa)\otimes_\mathscr{O}K$ coincides with the action of $\kappa(\gamma)(1+T)-1$ on $X\otimes_\mathscr{O}K$. In other words, if $[\kappa(\gamma)(1+T)-1]$ is the matrix for the endomorphism $\kappa(\gamma)(1+T)-1$ acting on $X\otimes_\mathscr{O}K$ with respect to the chosen basis, then the characteristic polynomial of $X(\kappa)$ is $\det(It-[\kappa(\gamma)(1+T)-1])$. We have
\begin{align*}
It-[\kappa(\gamma)(1+T)-1]&=It-\kappa(\gamma)I-\kappa(\gamma)[T]+I\\
&=I(t-\kappa(\gamma)+1)-\kappa(\gamma)[T]\\
&=\kappa(\gamma)(I(\kappa(\gamma)^{-1}t-1+\kappa(\gamma)^{-1})-[T])\\
&=\kappa(\gamma)(I(\kappa(\gamma)^{-1}(1+t)-1)-[T])\text{.}
\end{align*}
Thus the characteristic polynomial of $X(\kappa)$ is 
\begin{equation*}
\det(\kappa(\gamma)(I(\kappa(\gamma)^{-1}(1+t)-1)-[T]))=\kappa(\gamma)^dF(\kappa(\gamma)^{-1}(1+t)-1)\text{,}
\end{equation*} 
where $d=\lambda(X)$, as claimed.
\end{proof}
\indent Continuing to assume that $X$ is finitely generated and torsion with characteristic polynomial $F(t)$, but not necessarily with $\mu$-invariant zero, one can prove using the structure theory for such $\Lambda$-modules that $X_{\Gamma_n}$ (the module of $\Gamma_n$-coinvariants of $X$) is finite if and only if $F(t)$ has no zeros in $\overline{K}$ of the form $\zeta-1$, where $\zeta$ is a $p^n$-th root of unity. We use this observation together with our calculation of the characteristic polynomial of a twist in \cref{char-poly-twist} to establish the next result.
\begin{prop}
\label{finite-inv}
Assume $\kappa$ is non-trivial. Then for all but finitely many $i\in\mathbf{Z}$, $X(\kappa^i)_{\Gamma_n}$ is finite for all $n\geq 0$.
\end{prop}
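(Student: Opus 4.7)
The plan is to combine the criterion stated just before the proposition with the twist formula of \cref{char-poly-twist}, and close out with a $p$-adic logarithm argument. By that criterion, $X(\kappa^i)_{\Gamma_n}$ is finite for every $n\ge 0$ if and only if the characteristic polynomial $F_i(t)$ of $X(\kappa^i)$ has no zero in $\overline{K}$ of the form $\zeta-1$ with $\zeta$ a $p$-power root of unity. So I need to show that only finitely many $i\in\mathbf{Z}$ give rise to such a zero.

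First I would reduce to the case $\mu(X)=0$. By \cref{lambda-structure-thm}, $X$ is pseudo-isomorphic to $Y\oplus Z$ where $\mu(Y)=0$ and $Z$ is a direct sum of elementary modules of the form $\Lambda/\pi^{m}\Lambda$; since twisting by $\kappa^i$ is an $\mathscr{O}$-linear functor that preserves $\mathscr{O}$-finite kernels and cokernels, this pseudo-isomorphism persists after twisting. The module $Z(\kappa^i)$ still satisfies $Z(\kappa^i)\otimes_{\mathscr{O}}K=0$, so its characteristic polynomial is a power of $\pi$ and contributes no roots, and characteristic polynomials are pseudo-isomorphism invariants. Hence the zeros of $F_i(t)$ in $\overline{K}$ coincide with those of the characteristic polynomial of $Y(\kappa^i)$. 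Writing $F_Y(t)=\prod_j(t-\alpha_j)$, \cref{char-poly-twist} expresses these zeros explicitly as $\{\kappa(\gamma)^i(1+\alpha_j)-1\}_j$.

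The bad condition thus becomes: $\kappa(\gamma)^i(1+\alpha_j)=\zeta$ for some $j$ and some $p$-power root of unity $\zeta$. The elementary factors comprising $Y$ are of the form $\Lambda/\mathfrak{p}^e$ for height-one primes $\mathfrak{p}\ne(\pi)$, so $F_Y$ is a product of (powers of) distinguished polynomials, and the Newton polygon forces $v(\alpha_j)>0$ for every $j$. In particular $1+\alpha_j$ lies in the principal unit group $1+\mathfrak{m}_{\overline{\mathscr{O}}}$ on which the $p$-adic logarithm is defined; $\kappa(\gamma)\in 1+q\mathbf{Z}_p$ lies there too, and $\log\zeta=0$ for $\zeta$ a $p$-power root of unity. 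Applying $\log$ to the bad equation yields
\[
i\,\log\kappa(\gamma)=-\log(1+\alpha_j).
\]
Since $\Gamma\cong\mathbf{Z}_p$ is torsion-free and $\kappa$ is non-trivial, $\kappa$ is injective, so $\kappa(\gamma)$ has infinite multiplicative order, and $\log\kappa(\gamma)\ne 0$. Therefore for each fixed $\alpha_j$ at most one $i\in\mathbf{Z}$ satisfies the equation, and since there are only finitely many $\alpha_j$, only finitely many $i$ can be bad.

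The main technical hinge is the availability of $\log(1+\alpha_j)$, which relies on $v(\alpha_j)>0$; this is precisely why the structure-theoretic passage to the distinguished-polynomial factors is essential. Once one is inside the domain of the logarithm, the injectivity of $\kappa$ converts a multiplicative obstruction in $\overline{K}^\times$ into a linear equation in $\overline{K}$ with at most one integer solution per root, and the result follows.
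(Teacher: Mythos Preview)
Your proof is correct and follows essentially the same route as the paper's: reduce to $\mu(X)=0$, use \cref{char-poly-twist} to identify the roots of the twisted characteristic polynomial, and then show that each root $\alpha_j$ of $F$ can account for at most one bad integer $i$. The only cosmetic difference is that you invoke the $p$-adic logarithm to extract uniqueness of $i$ from $\kappa(\gamma)^i(1+\alpha_j)=\zeta$, whereas the paper argues directly that if $i\neq j$ gave the same root of $F$ then $\kappa(\gamma)^{i-j}$ would be a $p$-power root of unity in the torsion-free group $1+q\mathbf{Z}_p$; since $\ker\log=\mu_{p^\infty}$ on principal units, these are the same observation.
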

\begin{proof}
To begin, it follows from the structure theorem for finitely generated $\Lambda$-modules that for all $i\in\mathbf{Z}$ and $n\geq 0$, $X(\kappa^i)_{\Gamma_n}$ is finite if and only if $(X(\kappa^i)/X(\kappa^i)[\pi^\infty])_{\Gamma_n}$ is finite. Combining this fact with the exactness of twisting, we see that it is no loss of generality to assume that $\mu(X)=0$ (so $\mu(X(\kappa^i))=0$ for all $i$). Let $F_i(t)=\kappa(\gamma)^{i\lambda(X)}F(\kappa(\gamma)^{-i}(1+t)-1)$, where, again, $F(t)$ is the characteristic polynomial of $X$. By \cref{char-poly-twist}, this is the characteristic polynomial of $X(\kappa^i)$. Suppose that $i,j\in\mathbf{Z}$ are distinct integers for which there exist $n_1,n_2\geq 0$ with $X(\kappa^i)_{\Gamma_{n_1}}$ and $X(\kappa^j)_{\Gamma_{n_2}}$ infinite. Then there are $\zeta\in\mu_{p^{n_1}}(\overline{K})$ and $\zeta^\prime\in\mu_{p^{n_2}}(\overline{K})$ such that $F_i(\zeta-1)=0=F_j(\zeta^\prime-1)$. This means that $\kappa(\gamma)^{-i}\zeta-1$ and $\kappa(\gamma)^{-j}\zeta^\prime-1$ are roots of $F$. If these roots are the same, then $\kappa(\gamma)^{-i}\zeta=\kappa(\gamma)^{-j}\zeta^\prime$, whence $\kappa(\gamma)^{j-i}=\zeta^{-1}\zeta^\prime$. The right-hand side of this last equation is visibly a root of unity, but the left-hand side is an element of $1+q\mathbf{Z}_p$, which is torsion-free. Thus $\kappa(\gamma)^{j-i}=1$, which forces $i=j$ (because $\kappa$ is non-trivial), contrary to assumption. So the roots of $F$ are distinct. It follows that we may define an injective map from the set of integers $i\in\mathbf{Z}$ for which $X(\kappa^i)_{\Gamma_n}$ is not finite for \emph{some} $n$ to the set of roots of $F(t)$, which is finite as $F(t)\neq 0$. The former set is therefore finite as well.
\end{proof}


\begin{thebibliography}{0}

\bibitem[Car83]{Car83} H. Carayol. Sur les repr\'{e}sentations $l$-adiques associ\'{e}es aux formes modulaires de Hilbert. {\it C. R. Acad. Sci. Paris S\'{e}r. I Math.}, 296(15):629-632, 1983.

\bibitem[EPW06]{EPW} M. Emerton, R. Pollack, and T. Weston. Variation of Iwasawa invariants in Hida families. {\it Invent. Math.}, 163(3):523-580, 2006.

\bibitem[Gre89]{GR89} R. Greenberg. Iwasawa theory for $p$-adic representations. In {\it Algebraic number theory}, volume 17 of {\it Adv. Stud. Pure Math.}, pages 97-137. Academic Press, Boston, MA, 1989.

\bibitem[Gre01]{Gr01} R. Greenberg. Introduction to Iwasawa theory for elliptic curves. In {\it Arithmetic algebraic geometry (Park City, UT, 1999)}, volume 9 of {\it IAS/Park City Math. Ser.}, pages 407-464. Amer. Math. Soc., Providence, RI, 2001.

\bibitem[Gre10]{Gr10} R. Greenberg. Selmer groups and congruences. In {\it Procedings of the International Congress of Mathematicians. Volume II}, pages 231-248, New Delhi, 2010. Hindustan Book Agency.

\bibitem[GV00]{GV00} R. Greenberg and V. Vatsal. On the Iwasawa invariants of elliptic curves. {\it Invent. Math.}, 142(1):17-63, 2000.

\bibitem[Hac11]{Hach11} Y. Hachimori. Iwasawa $\lambda$-invariants and congruence of Galois representations. {\it J. Ramanujan Math. Soc.}, 26(2):203-217, 2011.

\bibitem[Man71]{Man71} Y. I. Manin. Cyclotomic fields and modular curves. {\it Russian Math. Surveys}, 26(6), 1971.

\bibitem[Maz72]{M} B. Mazur. Rational points of abelian varieties with values in towers of number fields. {\it Invent. Math.}, 18:183-266, 1972.

\bibitem[NSW08]{NSW} J. Neukirch, A. Schmidt, and K. Wingberg. {\it Cohomology of number fields}, volume 323 of {\it Grundlehren der Mathematischen Wissenschaften [Fundamental Principles of Mathematical Sciences]}. Springer-Verlag, Berlin, second edition, 2008.

\bibitem[PW11]{PW11} R. Pollack and T. Weston. On anticyclotomic $\mu$-invariants of modular forms. {\it Compos. Math.}, 147(5):1353-1381, 2011.

\bibitem[Wes05]{W} T. Weston. Iwasawa invariants of Galois deformations. {\it Manuscripta Math.}, 118(2):161-180, 2005.
\end{thebibliography}
\end{document}